\newtheorem{theorem}{Theorem}[section]
\newtheorem{lemma}[theorem]{Lemma}
\newtheorem{proposition}[theorem]{Proposition}
\newtheorem{corollary}[theorem]{Corollary}
\theoremstyle{definition}
\newtheorem{definition}[theorem]{Definition}
\newtheorem{remark}[theorem]{Remark}
\numberwithin{equation}{section}
\newcommand{\legendre}[2]{\genfrac{(}{)}{}{}{#1}{#2}}
\author{Eb\'{e}n\'{e}zer Ntienjem}
\address{
Centre for Research in Algebra and Number Theory \\
School of Mathematics and Statistics\\
Carleton University\\
1125 Colonel By Drive\\
Ottawa, Ontario, K1S 5B6, Canada}
\email{ebenezer.ntienjem@carleton.ca;ntienjem@gmail.com}
\keywords{
Sums of Divisors; Dedekind eta function; Convolution Sums; Modular Forms; 
Dirichlet Characters; Eisenstein forms; Cusp Forms; Octonary quadratic Forms; 
Number of Representations}
\subjclass[2010]{11A25, 11F11, 11F20, 11E20, 11E25, 11F27}
\def \eN{\mbox{E.~Ntienjem}}
\def \waS{\mbox{W.~A.~Stein}}
\def \mN{\mbox{M.~Newman}}
\def \gL{\mbox{G.~Ligozat}}
\def \gK{\mbox{G.~K\"{o}hler}}
\def \ljpK{\mbox{L.~J.~P.~Kilford}}
\def \aA{\mbox{A.~Alaca}}
\def \sA{\mbox{\c{S}.~Alaca}}
\def \aP{\mbox{A.~Pizer}}
\def \ksW{\mbox{K.~S.~Williams}}
\def \jgH{\mbox{J.~G.~Huard}}
\def \gaL{\mbox{G.~A.~Lomadze}}
\def \mL{\mbox{M.~Lemire}}
\def \sC{\mbox{S.~Cooper}}
\def \pcT{\mbox{P.~C.~Toh}}
\def \bR{\mbox{B.~Ramakrishnan}}
\def \bS{\mbox{B.~Sahu}}
\def \mB{\mbox{M.~Besge}}
\def \dY{\mbox{D.~Ye}}
\def \jwlG{\mbox{J.~W.~L.~Glaisher}}
\def \sR{\mbox{S.~Ramanujan}}
\def \yK{\mbox{Y.~Kesicio$\check{g}$lu}}
\def \eR{\mbox{E.~Royer}}
\def \zmO{\mbox{Z.~M.~Ou}}
\def \bkS{\mbox{B.~K.~Spearman}}
\def \hhC{\mbox{H.~H.~Chan}}
\def \exwX{\mbox{E.~X.~W.~Xia}}
\def \xlT{\mbox{X.~L.~Tian}}
\def \oxmY{\mbox{O.~X.~M.~Yao}}
\def \nK{\mbox{N.~Koblitz}}
\def \tM{\mbox{T.~Miyake}}
\def \E{\mbox{$\EuFrak{E}$}}
\def \S{\mbox{$\EuFrak{S}$}}
\def \M{\mbox{$\EuFrak{M}$}}
\def \N{\mbox{$\EuFrak{N}$}}
\begin{document}

%\doublespacing

\title[Elementary Evaluation of Convolution Sums for a Class of positive Integers, II]
{Elementary Evaluation of Convolution Sums involving primitive 
Dirichlet Characters for a Class of positive Integers
}

\begin{abstract}
We extend the results obtained by \eN\ \cite{ntienjem2016c} to all 
positive integers. 
Let $\N$ be the subset of $\mathbb{N}$ consisting of $\,2^{\nu}\mho$, where $\nu$ is in $\{0,1,2,3\}$ and $\mho$ 
is a squarefree finite product of distinct odd primes.  
We discuss the evaluation of the convolution sum, 
$\underset{\substack{ {(l,m)\in\mathbb{N}^{2}} \\ {\alpha\,l+\beta\,m=n}
} }{\sum}\sigma(l)\sigma(m)$, when $\alpha\beta$ is in  
$\mathbb{N}\setminus\N$. 
The evaluation of convolution sums belonging to this class is achieved by applying 
modular forms and primitive Dirichlet characters. 
In addition, we revisit the evaluation of the convolution sums for $\alpha\beta=9$, 
$16$, $18$, $25$, $36$.  
If $\alpha\beta\equiv 0 \pmod{4}$, we determine 
natural numbers $a,b$ and use the evaluated convolution sums together
with other known convolution sums to carry out 
the number of representations of $n$ by the octonary quadratic forms
$a\,(x_{1}^{2} + x_{2}^{2} + x_{3}^{2} + x_{4}^{2})+ b\,(x_{5}^{2} + x_{6}^{2}
  + x_{7}^{2} + x_{8}^{2})$.
Similarly, if $\alpha\beta\equiv 0 \pmod{3}$, we compute natural numbers $c,d$ 
and make use of the 
evaluated convolution sums together with other known convolution sums 
to determine the number of representations of $n$ by the octonary quadratic forms
$c\,(\,x_{1}^{2} + x_{1}x_{2} + x_{2}^{2} + x_{3}^{2} + x_{3}x_{4} + x_{4}^{2}\,)
+ d\,(\,x_{5}^{2} + x_{5}x_{6} + x_{6}^{2} + x_{7}^{2} + x_{7}x_{8} + x_{8}^{2}\,)$. 
We illustrate our method with the explicit examples  
$\alpha\beta = 3^{2}\cdot 5$,  $\alpha\beta = 2^{4}\cdot 3$,  
$\alpha\beta = 2\cdot 5^{2}$ and $\alpha\beta = 2^{6}$, . 
\end{abstract}

\maketitle

\section{Introduction} \label{introduction}

In this work, we denote by $\mathbb{N}$, $\mathbb{Z}$, $\mathbb{Q}$, 
$\mathbb{R}$ 
and $\mathbb{C}$ the sets of natural numbers, integers, rational 
numbers, real numbers and complex numbers, respectively. 
Let $i,j,k,l,m,n\in\mathbb{N}$ in the sequel. The sum of positive divisors 
of $n$ to the power of $k$, $\sigma_{k}(n)$, is defined by 
\begin{equation} \label{def-sigma_k-n}
 \sigma_{k}(n) = \sum_{0<d|n}d^{k}.
 \end{equation} 
We let $\sigma(n)$ stand as a synonym for $\sigma_{1}(n)$. 
For $m\notin\mathbb{N}$ we set $\sigma_{k}(m)=0$. 

Let $\alpha,\beta\in\mathbb{N}$ be such that $\alpha\leq\beta$.
We define the convolution sum, 
 $W_{(\alpha,\beta)}(n)$, as follows:  
 \begin{equation} \label{def-convolution_sum}
 W_{(\alpha, \beta)}(n) =  \sum_{\substack{
 {(l,m)\in\mathbb{N}^{2}} \\ {\alpha\,l+\beta\,m=n}
} }\sigma(l)\sigma(m).
 \end{equation}
We write $W_{\beta}(n)$ as a synonym for $W_{(1,\beta)}(n)$. 
If for all $(l,m)\in\mathbb{N}^{2}$ it holds that 
$\alpha\,l+\beta\,m\neq n$, then we set $W_{(\alpha,\beta)}(n)=0$ 

So far known convolution sums are displayed on \autoref{introduction-table-1}.

\begin{longtable}{|r|r|r|} \hline 
\textbf{Level $\alpha\beta$}  &  \textbf{Authors}   &  \textbf{References}  \\ \hline
1  &  \mB, \jwlG, & ~ \\
 ~  & \sR\  & \cite{besge,glaisher,ramanujan} \\ \hline
2, 3, 4  & \jgH\ \& \zmO\ \&   &  ~  \\
 ~  & \bkS\ \& \ksW\   & \cite{huardetal} \\ \hline
5, 7  & \mL\ \& \ksW,  & ~  \\
 ~ & \sC\ \& \pcT\   & \cite{cooper_toh,lemire_williams} \\ \hline
6  & \sA\ \& \ksW\   & \cite{alaca_williams} \\ \hline
8, 9  & \ksW\   & \cite{williams2006, williams2005} \\ \hline
10, 11, 13, 14 &  \eR   & \cite{royer} \\ \hline
12, 16, 18, 24 &  \aA\ \& \sA\  &  ~  \\ 
 ~  & \& \ksW\   &
\cite{alaca_alaca_williams2006,alaca_alaca_williams2007,alaca_alaca_williams2007a,alaca_alaca_williams2008}
\\ \hline
15  & \bR\ \& \bS\   & \cite{ramakrishnan_sahu} \\ \hline
10, 20  & \sC\ \& \dY\   & \cite{cooper_ye2014} \\ \hline
23  & \hhC\ \& \sC\   & \cite{chan_cooper2008} \\ \hline
25  & \exwX\ \& \xlT\  &  ~  \\ 
 ~  & \& \oxmY   & \cite{xiaetal2014} \\ \hline
27, 32  & \sA\ \& \yK   & \cite{alaca_kesicioglu2014} \\ \hline
36  & \dY\   & \cite{ye2015} \\ \hline
14, 26, 28, 30 &  \eN  & \cite{ntienjem2015} \\ \hline
22, 44, 52  &  \eN\  & \cite{ntienjem2016b}  \\ \hline
33, 40, 56  & ~  &  ~ \\ 
$\alpha\beta=2^{\nu}\underset{j\geq 2}{\overset{\kappa}{\prod}}\,p_{j}$, where 
 & ~  & ~ \\
$\gcd(\alpha,\beta)=1$, $0\leq\nu\leq 3$, &   \eN\  & \cite{ntienjem2016c}  \\
$\kappa\in\mathbb{N}$, $p_{j}>2$ distinct primes  & ~   & ~ \\ \hline
\caption{Known convolution sums $W_{(\alpha, \beta)}(n)$ of level $\alpha\beta$} 
\label{introduction-table-1}
\end{longtable}

Let 
\begin{equation*}  \label{introd-eqn-N}
\N=\{\,2^{\nu}\mho\,|\,\nu\in\{0,1,2,3\} \text{ and $\mho$ is a squarefree  
finite product of distinct odd primes}\,\} 
\end{equation*}  
be a subset of $\mathbb{N}$.  

We evaluate the convolution sum, $W_{(\alpha,\beta)}(n)$, for the class of 
natural numbers $\alpha\beta$ such that $\alpha\beta\in(\mathbb{N}\setminus\N)$.
We use Dirichlet characters and modular forms to evaluate these convolution sums. 

We observe that the positive integers $\alpha\beta=9,16,18,25,27,32,36$ from %Table 
\autoref{introduction-table-1} belong 
to the class of integers for which the evaluation of the convolution sum 
is discussed in this paper. 
From these integers, the convolution sums for $27$ and $32$ are evaluated 
using the approach that we are generalizing in the sequel. 
We revisit the evaluation of the convolution sums for $\alpha\beta=9$, 
$16$, $18$, $25$, $36$ using our method.  

We use the result from the above general case to obtain 
the evaluation of the convolution sum for 
$\alpha\beta=3^{2}\cdot 5$, $\alpha\beta = 2^{4}\cdot 3$,  $\alpha\beta=2\cdot 5^{2}$ 
and $\alpha\beta = 2^{6}$.  
These convolution sums  have not been evaluated as yet.

As an application, convolution sums are used to determine explicit 
formulae for the number of representations of a positive 
integer $n$ by the octonary quadratic forms 
\begin{equation} \label{introduction-eq-1}
a\,(x_{1}^{2} +x_{2}^{2} + x_{3}^{2} + x_{4}^{2})+ b\,(x_{5}^{2} + x_{6}^{2} + 
x_{7}^{2} + x_{8}^{2}) 
\end{equation}
and 
\begin{equation} \label{introduction-eq-2}
c\,(x_{1}^{2} + x_{1}x_{2} + x_{2}^{2} + x_{3}^{2} + x_{3}x_{4} + x_{4}^{2})
+ d\,(x_{5}^{2} + x_{5}x_{6}+ x_{6}^{2} + x_{7}^{2} + x_{7}x_{8}+ 
x_{8}^{2}),
\end{equation}
respectively, where $a, b,c, d\in \mathbb{N}$. 

So far known explicit formulae for the number of  
representations of $n$ by the octonary form 
\autoref{introduction-eq-1}  
are displayed in %Table 
\autoref{introduction-table-rep2}. 

\begin{longtable}{|r|r|r|} \hline 
$\mathbf{(a,b)}$  &  \textbf{Authors}   &  \textbf{References}  \\ \hline
(1,1),(1,3), & ~ & ~ \\
(1,9),(2,3)  & \eN\   & \cite{ntienjem2016c} \\ \hline
(1,2)  & \ksW\   & \cite{williams2006} \\ \hline
(1,4)  & \aA\ \& \sA\  &  ~  \\ 
 ~  & \& \ksW\   & \cite{alaca_alaca_williams2007} \\ \hline
(1,5)  & \sC\ \& \dY\   & \cite{cooper_ye2014} \\ \hline
(1,6)  & \bR\ \& \bS\   & \cite{ramakrishnan_sahu}  \\ \hline
(1,7) & \eN\  & \cite{ntienjem2015} \\ \hline 
(1,8)  & \sA\ \& \yK\   & \cite{alaca_kesicioglu2014} \\ \hline
(1,11),(1,13)   &  \eN\  & \cite{ntienjem2016b}  \\ \hline
(1,10),(1,14),  &  ~  & ~ \\ 
(2,5),(2,7),  &  ~  & ~ \\ 
$a\,b=2^{\nu}\underset{j\geq 2}{\overset{\kappa}{\prod}}\,p_{j}$, where 
 & ~  & ~ \\
$\gcd(a,b)=1$, $0\leq\nu\leq 1$, & \eN\  & \cite{ntienjem2016c} \\
$\kappa\in\mathbb{N}$, $p_{j}>2$ distinct primes  & ~   & ~ \\ \hline
\caption{Known representations of $n$ by the form \autoref{introduction-eq-1}
} 
\label{introduction-table-rep2}
\end{longtable}

Similarly, so far known explicit formulae for the number of  
representations of $n$ by the octonary form \autoref{introduction-eq-2} 
are referenced in %Table 
\autoref{introduction-table-rep1}.

\begin{longtable}{|r|r|r|} \hline 
$\mathbf{(c,d)}$  &  \textbf{Authors}   &  \textbf{References}  \\ \hline
(1,1)  &  \gaL\  & \cite{lomadze} \\ \hline
(1,2)  & \sA\ \& \ksW\   & \cite{alaca_williams}    \\ \hline
(1,3)  & \ksW\   & \cite{williams2005}    \\ \hline 
(1,4),(1,6),  & ~ & ~ \\
(1,8),(2,3)  & \aA\ \& \sA\  &  ~  \\ 
 ~  &  \& \ksW\   & \cite{alaca_alaca_williams2006,alaca_alaca_williams2007,alaca_alaca_williams2007a} \\ \hline 
(1,5)  & \bR\ \& \bS\   & \cite{ramakrishnan_sahu}  \\ \hline
(1,9)  & \sA\ \& \yK\   & \cite{alaca_kesicioglu2014} \\ \hline
(1,10), (2,5) & \eN\  & \cite{ntienjem2015} \\ \hline 
(1,12),(3,4)  & \dY\   & \cite{ye2015} \\ \hline
(1,11),  &  ~  &  ~ \\ 
$c\,d=2^{\nu}\underset{j\geq 3}{\overset{\kappa}{\prod}}\,p_{j}$, where 
 & ~  & ~ \\
$\gcd(c,d)=1$, $0\leq\nu\leq 3$, & \eN\  & \cite{ntienjem2016c} \\
$\kappa\in\mathbb{N}$, $p_{j}>3$ distinct primes  & ~   & ~ \\ \hline
\caption{Known representations of $n$ by the form \autoref{introduction-eq-2}
} 
\label{introduction-table-rep1}
\end{longtable}
 
We also determine explicit formulae for the number of representations 
of a positive integer $n$ by such octonary quadratic forms whenever  
$\alpha\beta\equiv 0\pmod{4}$ or $\alpha\beta\equiv 0\pmod{3}$. 

We then use the convolution sums,  $W_{(\alpha,\beta)}(n)$, where 
$\alpha\beta = 3^{2}\cdot 5, 2^{4}\cdot 3, 2^{6}$, to
give examples of explicit formulae for the number of representations 
of a positive 
integer $n$ by the octonary quadratic forms \autoref{introduction-eq-1} and  
\autoref{introduction-eq-2}.

This paper is organized as follows. In 
\hyperref[modularForms]{Section \ref*{modularForms}} we discuss modular forms  
and briefly define $\eta$-functions  and 
convolution sums. We assume that   
$\alpha\beta$ has the above form and then discuss the evaluation of 
the convolution sum,  $W_{(\alpha,\beta)}(n)$, in %Section 
\hyperref[convolution_alpha_beta]{Section \ref*{convolution_alpha_beta}}. In 
\hyperref[representations_a_b]{Section \ref*{representations_a_b}} and 
\hyperref[representations_c_d]{Section \ref*{representations_c_d}}, we 
discuss a technique for computing all pairs of natural numbers $(a,b)$ 
and $(c,d)$, and then determine explicit 
formulae for the number of representations of $n$ by the octonary form 
\autoref{introduction-eq-1} and \autoref{introduction-eq-2} when 
$\alpha\beta\equiv 0\pmod{4}$ or $\alpha\beta\equiv 0\pmod{3}$. In 
\hyperref[convolution_45_50]{Section \ref*{convolution_45_50}}, we evaluate the convolution sums
$W_{(1,45)}(n)$, $W_{(5,9)}(n)$, $W_{(1,48)}(n)$, $W_{(3,16)}(n)$, $W_{(1,50)}(n)$, 
$W_{(2,25)}(n)$ and $W_{(1,64)}(n)$ ; then in 
\hyperref[representations_5_9]{Section \ref*{representations_5_9}}, 
we make use of these convolution sums and other known convolution sums 
to determine an explicit formula for the number of
representations of a positive integer $n$ by the octonary quadratic form 
\begin{itemize}
	\item \autoref{introduction-eq-1}, 
	where $(a,b)$ stands for $(1, 12)$, $(3,4)$, $(1, 16)$. 
	\item \autoref{introduction-eq-2}, where $(c,d)$ stands for $(1,15)$, 
	$(3,5)$, $(1, 16)$. 
\end{itemize}
The evaluation of the convolution sums for $\alpha\beta=9$, 
$16$, $18$, $25$, $36$ is revisited in 
\hyperref[Revisited-Evaluations]{Section \ref*{Revisited-Evaluations}}. 
Outlook and concluding remarks are made in 
\hyperref[conclusion]{Section \ref*{conclusion}}.

Software for symbolic scientific computation is used to obtain the results 
of this paper.  This software comprises the open source software packages 
\emph{GiNaC}, \emph{Maxima}, \emph{Reduce}, \emph{SAGE} and the commercial software 
package \emph{MAPLE}.

%%%%%%%%%%%  Modular Forms %%%%%%%%%%%%

\section{Essentials to the understanding of the problem} \label{modularForms}

\subsection{Modular Forms} \label{modForms}

Let $\mathbb{H}=\{ z\in \mathbb{C}~ | ~\text{Im}(z)>0\}$, 
be the upper half-plane 
and let  $\Gamma=G=\text{SL}_{2}(\mathbb{R})= \{\,\left(\begin{smallmatrix} a
    & b \\ c & d \end{smallmatrix}\right)\,\mid\, a,b,c,d\in\mathbb{R} 
\text{ and } ad-bc=1\,\}$  
be the group of $2\times 2$-matrices. Let $N\in\mathbb{N}$. Then  
\begin{eqnarray*}
\Gamma(N) & = \bigl\{~\left(\begin{smallmatrix} a & b \\ c &
  d \end{smallmatrix}\right)\in\text{SL}_{2}(\mathbb{Z})~ |
  ~\left(\begin{smallmatrix} a & b \\ c &
 d\end{smallmatrix}\right)\equiv\left(\begin{smallmatrix} 1 & 0 \\ 0 & 1 
 \end{smallmatrix}\right) \pmod{N} ~\bigr\}
\end{eqnarray*}
is a subgroup of $G$ and is called a \emph{principal congruence subgroup of 
level N}. A subgroup $H$ of $G$ is called a \emph{congruence subgroup of 
level N} if it contains $\Gamma(N)$.

For our purposes the following congruence subgroup is relevant:
 \begin{align*}
\Gamma_{0}(N) & = \bigl\{~\left(\begin{smallmatrix} a & b \\ c &
  d \end{smallmatrix}\right)\in\text{SL}_{2}(\mathbb{Z})~ | ~
   c\equiv 0 \pmod{N} ~\bigr\}. 
\end{align*}
Let $k,N\in\mathbb{N}$ and let $\Gamma'\subseteq\Gamma$ be a congruence 
subgroup of level $N\in\mathbb{N}$. 
Let $k\in\mathbb{Z}, \gamma\in\text{SL}_{2}(\mathbb{Z})$ and $f : 
\mathbb{H}\cup\mathbb{Q}\cup\{\infty\} \rightarrow 
\mathbb{C}\cup\{\infty\}$. 
We denote by 
$f^{[\gamma]_{k}}$ the function whose value at $z$ is $(cz+d)^{-
k}f(\gamma(z))$, i.e., $f^{[\gamma]_{k}}(z)=(cz+d)^{-k}f(\gamma(z))$. 
The following definition is according to \nK\ \cite[p.\ 108]{koblitz-1993}.
\begin{definition} \label{modularForms-defin-2}
Let $N\in\mathbb{N}$, $k\in\mathbb{Z}$, $f$ be a meromorphic function 
on $\mathbb{H}$ and $\Gamma'\subset\Gamma$ a congruence subgroup of 
level $N$. 
\begin{enumerate}
\item[(a)] $f$ is called a \emph{modular function of weight $k$} for 
$\Gamma'$ if
\begin{enumerate}
\item[(a1)] for all $\gamma\in\Gamma'$ it holds that $f^{[\gamma]_{k}}=f$.
\item[(a2)] for any $\delta\in\Gamma$ it holds that $f^{[\delta]_{k}}(z)$ 
has the form 
%is meromorphic at infinity, i.e., the Fourier series
$\underset{n\in\mathbb{Z}}{\sum}a_{n}e^{\frac{2\pi i z n}{N}}$ 
%, where $q_{N}=e^{\frac{2\pi i z}{N}}$  
and $a_{n}\neq 0$ for finitely many $n\in\mathbb{Z}$ such that $n<0$. 
\end{enumerate}
\item[(b)] $f$ is called a \emph{modular form of weight $k$} for 
$\Gamma'$ if
	\begin{enumerate}
	\item[(b1)] $f$ is a modular function of weight $k$ for $\Gamma'$,
	\item[(b2)] $f$ is holomorphic on $\mathbb{H}$,
	\item[(b3)] for all $\delta\in\Gamma$ and for all $n\in\mathbb{Z}$ 
such that $n<0$ it holds that $a_{n}=0$.
	\end{enumerate}
\item[(c)] $f$ is called a \emph{cusp form of weight $k$ for $\Gamma'$} if
	\begin{enumerate}
	\item[(c1)] $f$ is a modular form of weight $k$ for $\Gamma'$, 
	\item[(c2)] for all $\delta\in\Gamma$ it holds that $a_{0}=0$.
	\end{enumerate}
\end{enumerate}
\end{definition}
Let us denote by $\M_{k}(\Gamma')$ the set of modular forms of weight $k$ 
for $\Gamma'$,  by $\S_{k}(\Gamma')$ the set of cusp forms of 
weight $k$ for $\Gamma'$ and by $\E_{k}(\Gamma')$ the set of Eisenstein forms. 
The sets $\M_{k}(\Gamma'),\,\S_{k}(\Gamma')$ and $\E_{k}(\Gamma')$ 
are vector spaces over $\mathbb{C}$. 
Therefore, $\M_{k}(\Gamma_{0}(N))$ is the space of 
modular forms of weight $k$ for 
$\Gamma_{0}(N)$, $\S_{k}(\Gamma_{0}(N))$ is the space of 
cusp forms of weight $k$ for $\Gamma_{0}(N)$, 
and $\E_{k}(\Gamma_{0}(N))$ is the space of Eisenstein forms. 
Consequently, \waS\ \cite[p.~81]{wstein} has shown that 
%\begin{equation*}
$\M_{k}(\Gamma_{0}(N))=\E_{k}(\Gamma_{0}(N))\oplus\S_{k}(\Gamma_{0}(N))$.
%\end{equation*}

We asume in this paper that $4\leq k$ is even and that $\chi$ and
$\psi$ are primitive Dirichlet characters with conductors $L$ and $R$,
respectively. \waS\ \cite[p.~86]{wstein} has noted that  
\begin{equation} \label{Eisenstein-gen}
E_{k,\chi,\psi}(q) = C_{0} + \underset{n=1}{\overset{\infty}{\sum}}\,\biggl(
\underset{d|n}{\sum}\,\psi(d)\chi(\frac{n}{d})\,d^{k-1}\,\biggr)q^{n}, 
\end{equation}
where 
\begin{equation*}
C_{0} = \begin{cases}
          0 & \text{ if } L >1 \\
          -\frac{B_{k,\chi}}{2k} & \text{ if } L=1
       \end{cases}
\end{equation*}
and $B_{k,\chi}$ are the generalized Bernoulli numbers. 
Theorems 5.8 and 5.9 in Section 5.3 of \waS\ \cite[p.~86]{wstein} are 
then applicable.

\subsection{$\eta$-Quotients}  \label{etaFunctions}

On the upper half-plane $\mathbb{H}$, the Dedekind $\eta$-function, $\eta(z)$,  
is defined by
%\begin{equation*}
$\eta(z) = e^{\frac{2\pi i z}{24}}\overset{\infty}{\underset{n=1}{\prod}}(1 - e^{2\pi i n z})$.
%\end{equation*}
Let us set $q=e^{2\pi i z}$. Then it follows that 
\begin{equation*}
\eta(z) = q^{\frac{1}{24}}\overset{\infty}{\underset{n=1}{\prod}}(1 - q^{n}) = q^{\frac{1}{24}} F(q),\quad 
\text{where } F(q)=\overset{\infty}{\underset{n=1}{\prod}} (1 - q^{n}).
\end{equation*}
We will use eta function, eta quotient and eta product interchangeably 
as synonyms.

\mN\ \cite{newman_1957,newman_1959} applied the Dedekind $\eta$-function 
to systematically construct modular forms for $\Gamma_{0}(N)$. 
Newman then 
%determines when a function $f(z)$ is a modular form for $\Gamma_{0}(N)$ by 
etablishes conditions (i)-(iv) in 
the following theorem. \gL\ \cite{ligozat_1975} determined the order 
of vanishing of an $\eta$-function at all cusps of $\Gamma_{0}(N)$, which 
is condition (v) or (v$'$) in the following theorem.

\ljpK\ \cite[p.~99]{kilford}  and \gK\ \cite[p.~37]{koehler} have 
formulated the following theorem; it will be used 
to exhaustively determine $\eta$-quotients, $f(z)$, which 
belong to $\M_{k}(\Gamma_{0}(N))$, and especially those $\eta$-quotients 
which are in $\S_{k}(\Gamma_{0}(N))$. 

\begin{theorem}[\mN\ and \gL\ ] \label{ligozat_theorem} 
Let $N\in \mathbb{N}$, $D(N)$ be the set of all positive divisors of 
$N$, $\delta\in D(N)$ and $r_{\delta}\in\mathbb{Z}$. Let furthermore  
$f(z)=\overset{}{\underset{\delta\in D(N)}{\prod}}\eta^{r_{\delta}}(\delta z)$ 
be an $\eta$-quotient. 
%If $f(z)$ satisfies the following five conditions
If the following five conditions are satisfied

\begin{tabular}{llll}
{\textbf{(i)}} & $\overset{}{\underset{\delta\in D(N)}{\sum}}\delta\,r_{\delta} 
	\,\equiv 0 \pmod{24}$, & 
{\textbf{(ii)}} &  $\overset{}{\underset{\delta\in D(N)}{\sum}}\frac{N}{\delta} 
	\,r_{\delta}\, \equiv 0 \pmod{24}$, \\
{\textbf{(iii)}} &  $\overset{}{\underset{\delta\in D(N)}
{\prod}}\delta^{r_{\delta}}$ \text{ is a square in } $\mathbb{Q}$, & 
{\textbf{(iv)}} &  $0 < \overset{}{\underset{\delta\in D(N)}
	{\sum}}r_{\delta}\, \equiv 0 \pmod{4}$,  
%\text{ is an even integer,} \\
\end{tabular}

{\textbf{(v)}} \text{ for each $d\in D(N)$ it holds that } 
	$\overset{}{\underset{\delta\in D(N)}{\sum}}\frac{\gcd{(\delta,d)}^{2}}	
	{\delta}\,r_{\delta} \geq 0$,  

then $f(z)\in\M_{k}(\Gamma_{0}(N))$, where 
$k=\frac{1}{2}\overset{}{\underset{\delta\in D(N)}{\sum}}r_{\delta}$. 

Moreover, the $\eta$-quotient $f(z)$ is an element of $\S_{k}(\Gamma_{0}(N))$ 
if ${\textbf{(v)}}$ is replaced by
 
{\textbf{(v')}} \text{ for each $d\in D(N)$ it holds that } 
$\overset{}{\underset{\delta\in D(N)}{\sum}}\frac{\gcd{(\delta,d)}^{2}}	
{\delta} r_{\delta} > 0$.
\end{theorem}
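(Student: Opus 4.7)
The plan is to establish \autoref{ligozat_theorem} by computing how the $\eta$-quotient $f(z) = \prod_{\delta \mid N} \eta^{r_{\delta}}(\delta z)$ transforms under $\Gamma_{0}(N)$ and then by evaluating its order at every cusp. First I would recall the classical transformation formula for the Dedekind $\eta$-function on $\mathrm{SL}_{2}(\mathbb{Z})$: for $\gamma = \left(\begin{smallmatrix} a & b \\ c & d \end{smallmatrix}\right)$ with $c > 0$, one has $\eta(\gamma z) = \varepsilon(a,b,c,d)\,(cz+d)^{1/2}\eta(z)$, where $\varepsilon$ is an explicit $24$-th root of unity expressible via a Jacobi symbol and an exponential of a Dedekind sum.

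For a fixed divisor $\delta$ of $N$ and any $\gamma = \left(\begin{smallmatrix} a & b \\ c & d \end{smallmatrix}\right) \in \Gamma_{0}(N)$, I would exploit the identity $\delta \cdot \gamma z = \gamma_{\delta} \cdot (\delta z)$ with $\gamma_{\delta} = \left(\begin{smallmatrix} a & b\delta \\ c/\delta & d \end{smallmatrix}\right)$; this matrix lies in $\mathrm{SL}_{2}(\mathbb{Z})$ because $N \mid c$ forces $\delta \mid c$ while $ad - bc = 1$ is preserved. Applying the transformation formula to each factor $\eta^{r_{\delta}}(\delta z)$ through $\gamma_{\delta}$ then gives $f(\gamma z) = \chi(\gamma)\,(cz+d)^{k}\,f(z)$, where $k = \tfrac{1}{2}\sum_{\delta} r_{\delta}$ and $\chi(\gamma) = \prod_{\delta} \varepsilon(a, b\delta, c/\delta, d)^{r_{\delta}}$.

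Next I would verify that conditions \textbf{(i)}--\textbf{(iv)} force $\chi(\gamma) = 1$ for every $\gamma \in \Gamma_{0}(N)$, which is Newman's contribution. Condition \textbf{(iv)} makes $k$ an even integer, so that $(cz+d)^{k}$ has no branch ambiguity and a possible residual sign character is eliminated. Condition \textbf{(iii)} makes the combined Jacobi-symbol contribution $\prod_{\delta} \left(\tfrac{\delta}{d}\right)^{r_{\delta}}$ collapse to $1$. Conditions \textbf{(i)} and \textbf{(ii)} kill the $24$-th-root-of-unity exponential, once one uses $ad \equiv 1 \pmod{c}$ and $N \mid c$. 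The main obstacle will be the bookkeeping, because the $\eta$-multiplier system is delicate; I would reduce the check to a generating set of $\Gamma_{0}(N)$, treating the translation $\left(\begin{smallmatrix} 1 & 1 \\ 0 & 1 \end{smallmatrix}\right)$ (which produces \textbf{(i)} directly from the $q^{1/24}$ factor) and the Fricke-type element $\left(\begin{smallmatrix} 0 & -1 \\ N & 0 \end{smallmatrix}\right)$ (whose involution $\delta \leftrightarrow N/\delta$ pairs \textbf{(i)} with \textbf{(ii)}) separately from a generic element with $c \neq 0$.

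Finally I would handle holomorphy and cusp behaviour. Holomorphy on $\mathbb{H}$ is immediate since each $\eta(\delta z)$ is holomorphic and nonvanishing there and each $r_{\delta}$ is an integer. A complete set of cusp representatives of $\Gamma_{0}(N) \backslash (\mathbb{Q} \cup \{\infty\})$ is indexed, up to an auxiliary class modulo $\gcd(d, N/d)$ irrelevant for the order computation, by the divisors $d$ of $N$. Ligozat's expansion expresses the order of $\eta(\delta z)$ at the cusp attached to $d$ as a positive constant times $\gcd(d,\delta)^{2}/\delta$; summing the contributions gives the order of $f$ at that cusp as a positive multiple of $\sum_{\delta} \tfrac{\gcd(d,\delta)^{2}}{\delta}\,r_{\delta}$. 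Condition \textbf{(v)} then reads ``order $\geq 0$ at every cusp'', giving $f \in \M_{k}(\Gamma_{0}(N))$, whereas \textbf{(v')} sharpens this to strict positivity, giving $f \in \S_{k}(\Gamma_{0}(N))$. Deriving Ligozat's cusp-order formula is the second substantial step; I would obtain it by conjugating by a scaling matrix sending $i\infty$ to the chosen cusp and reading off the leading $q$-power in the resulting Fourier expansion of $\eta(\delta z)$.
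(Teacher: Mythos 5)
The paper does not actually prove this theorem: it is imported verbatim from the literature, with the modularity conditions (i)--(iv) attributed to Newman \cite{newman_1957,newman_1959}, the cusp-order condition (v)/(v$'$) to Ligozat \cite{ligozat_1975}, and the formulation to Kilford \cite{kilford} and K\"{o}hler \cite{koehler}. So there is nothing in the paper to compare your argument against line by line; what you have written is a reconstruction of the classical proof, and as an outline it is essentially correct. The decomposition into (1) the conjugation identity $\delta\cdot\gamma z=\gamma_{\delta}\cdot(\delta z)$ with $\gamma_{\delta}=\left(\begin{smallmatrix} a & b\delta \\ c/\delta & d\end{smallmatrix}\right)\in\mathrm{SL}_{2}(\mathbb{Z})$, (2) the verification that (i)--(iv) trivialize the resulting product of $\eta$-multipliers, and (3) Ligozat's computation that the order of $f$ at the cusp attached to $d\mid N$ is a positive constant times $\sum_{\delta}\gcd(d,\delta)^{2}r_{\delta}/\delta$, is exactly how the cited sources proceed.

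Two points in your step (2) need more care than your sketch suggests. First, the matrix $\left(\begin{smallmatrix} 0 & -1 \\ N & 0\end{smallmatrix}\right)$ has determinant $N$ and does not lie in $\Gamma_{0}(N)$, so it cannot be a member of a generating set of $\Gamma_{0}(N)$; it is the Fricke involution, useful for explaining the symmetry between (i) and (ii) but not for a generator-by-generator verification. Moreover $\Gamma_{0}(N)$ has no convenient small generating set in general, which is why the standard rigorous treatment (e.g.\ the proof reproduced in K\"{o}hler or in Ono's account of Newman's theorem) instead takes an arbitrary $\gamma=\left(\begin{smallmatrix} a & b \\ c & d\end{smallmatrix}\right)\in\Gamma_{0}(N)$ with $c>0$ and $d$ odd, substitutes the explicit formula $\varepsilon(a,b,c,d)=\legendre{c}{d}\,i^{(1-d)/2}\,e^{\pi i\,(\,\cdot\,)/12}$ into $\prod_{\delta}\varepsilon(a,b\delta,c/\delta,d)^{r_{\delta}}$, and checks directly that (i), (ii) annihilate the exponential while (iii), (iv) annihilate the quadratic-symbol and $i$-power contributions; the cases $c=0$ and $d$ even are handled by separate elementary reductions. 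Second, when you invoke $ad\equiv 1\pmod{c}$ you must track the congruence modulo $24$ rather than modulo $c$, which is where $N\mid c$ and the hypothesis $24\mid\sum_{\delta}(N/\delta)r_{\delta}$ actually enter. Neither issue is fatal, but both are precisely the ``delicate bookkeeping'' you flag, and a complete write-up would have to carry them out; as it stands your text is a correct plan rather than a proof.
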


\subsection{Convolution Sums $W_{(\alpha, \beta)}(n)$}
\label{convolutionSumsEqns}

Given $\alpha,\beta\in\mathbb{N}$ such that $\alpha\leq\beta$, let 
the convolution sum be defined  by  \autoref{def-convolution_sum}.

\eN\ \cite{ntienjem2015,ntienjem2016c} has shown and \aA\ et al.\ 
\cite{alaca_alaca_williams2006} has remarked that one can simply assume 
that $\text{gcd}(\alpha,\beta)=1$.  

Let $q\in\mathbb{C}$ be such that $|q|<1$. Let furthermore $\chi$ and
$\psi$ be primitive Dirichlet characters with conductors $L$ and $R$,
respectively. We assume that $\chi=\psi$ and that $\chi$ is a Kronecker symbol 
in the following. Then the following Eisenstein series hold. 
\begin{align}  
L(q) = E_{2}(q) = 1-24\,\sum_{n=1}^{\infty}\sigma(n)q^{n}, 
\label{evalConvolClass-eqn-3} \\
M(q) = E_{4}(q) = 1+240\,\sum_{n=1}^{\infty}\sigma_{3}(n)q^{n}, 
\label{evalConvolClass-eqn-4a} \\
M_{\chi}(q) = E_{4,\chi}(q) = C_{0} + 
\sum_{n=1}^{\infty}\,\chi\,\sigma_{3}(n)\,q^{n}. 
\label{evalConvolClass-eqn-4}
\end{align}
Note that $M(q)$ is a special case of \autoref{Eisenstein-gen} or 
\autoref{evalConvolClass-eqn-4}. 
We state two relevant results for the sequel of this work.

\begin{lemma}  \label{evalConvolClass-lema-1}
Let $\alpha, \beta \in \mathbb{N}$. Then 
\begin{equation*}
( \alpha\, L(q^{\alpha}) - \beta\, L(q^{\beta}) )^{2}\in
\M_{4}(\Gamma_{0}(\alpha\beta)).
\end{equation*}
\end{lemma}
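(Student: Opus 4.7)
The plan is to first show that $\alpha L(q^{\alpha}) - \beta L(q^{\beta})$ already lies in $\M_{2}(\Gamma_{0}(\alpha\beta))$, and then invoke the elementary fact that the space of modular forms on a fixed congruence subgroup is a graded ring, so squaring a weight-$2$ form lands in weight $4$.

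The Eisenstein series $L(q)=E_{2}(q)$ is only \emph{quasi}modular: under $S=\left(\begin{smallmatrix}0&-1\\1&0\end{smallmatrix}\right)$ it satisfies $E_{2}(-1/z)=z^{2}E_{2}(z)-\frac{6iz}{\pi}$, and the anomalous term $-\frac{6iz}{\pi}$ is exactly what prevents $E_{2}$ from being a genuine modular form for the full modular group. The first step is to recall the classical observation that for every positive integer $t$ the combination
\begin{equation*}
h_{t}(z)\;:=\;E_{2}(z)\;-\;t\,E_{2}(tz)
\end{equation*}
is a genuine modular form of weight $2$ on $\Gamma_{0}(t)$: a direct computation with the transformation of $E_{2}$ under $S$ and $T=\left(\begin{smallmatrix}1&1\\0&1\end{smallmatrix}\right)$ shows that the two $-\frac{6iz}{\pi}$-contributions cancel precisely because of the prefactor $t$, and the $q$-expansion is manifestly holomorphic at the cusps.

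Having established this, I apply the fact twice, once with $t=\alpha$ and once with $t=\beta$, obtaining $h_{\alpha}\in\M_{2}(\Gamma_{0}(\alpha))$ and $h_{\beta}\in\M_{2}(\Gamma_{0}(\beta))$. Because $\alpha\mid\alpha\beta$ and $\beta\mid\alpha\beta$, we have $\Gamma_{0}(\alpha\beta)\subseteq\Gamma_{0}(\alpha)\cap\Gamma_{0}(\beta)$, so both $h_{\alpha}$ and $h_{\beta}$ lie in $\M_{2}(\Gamma_{0}(\alpha\beta))$. Taking the difference,
\begin{equation*}
h_{\beta}(z)-h_{\alpha}(z)\;=\;\alpha\,E_{2}(\alpha z)\;-\;\beta\,E_{2}(\beta z)\;=\;\alpha\,L(q^{\alpha})-\beta\,L(q^{\beta}),
\end{equation*}
which is therefore an element of $\M_{2}(\Gamma_{0}(\alpha\beta))$. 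Finally, since $\M_{k_{1}}(\Gamma_{0}(N))\cdot\M_{k_{2}}(\Gamma_{0}(N))\subseteq\M_{k_{1}+k_{2}}(\Gamma_{0}(N))$, squaring yields $(\alpha L(q^{\alpha})-\beta L(q^{\beta}))^{2}\in\M_{4}(\Gamma_{0}(\alpha\beta))$, as required.

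The only nontrivial point, and therefore the step I regard as the main obstacle, is the cancellation of the quasimodular defect in $h_{t}$. One must track the transformation rule under an arbitrary $\gamma=\left(\begin{smallmatrix}a&b\\c&d\end{smallmatrix}\right)\in\Gamma_{0}(t)$ (not only under $S$), use $t\mid c$ to identify the matrix acting on $tz$ as another element of $\mathrm{SL}_{2}(\mathbb{Z})$, and verify holomorphy at every cusp of $\Gamma_{0}(t)$ rather than at $\infty$ alone. This is standard material but is the only piece requiring genuine computation; everything after it reduces to tracking inclusions of congruence subgroups and using the ring structure on $\bigoplus_{k}\M_{k}(\Gamma_{0}(\alpha\beta))$.
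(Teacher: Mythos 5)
Your proof is correct and is essentially the same argument the paper relies on: the paper itself only cites \cite{ntienjem2016c} for this lemma, and the standard proof there proceeds exactly as you do, by showing $E_{2}(z)-t\,E_{2}(tz)\in\M_{2}(\Gamma_{0}(t))$ via cancellation of the quasimodular defect, writing $\alpha\,L(q^{\alpha})-\beta\,L(q^{\beta})$ as the difference of two such weight-$2$ forms restricted to $\Gamma_{0}(\alpha\beta)$, and then squaring. No gaps; the one computation you flag (the transformation under a general $\gamma\in\Gamma_{0}(t)$, using $t\mid c$ to rewrite $t\gamma z$ as $\gamma'(tz)$ with $\gamma'\in\mathrm{SL}_{2}(\mathbb{Z})$) is indeed the only substantive step and works exactly as you describe.
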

\begin{proof} %See \aA\ et al.\ \cite{alaca_alaca_ntienjem2016}.
See \eN\ \cite{ntienjem2016c}.
\end{proof}
\begin{theorem} \label{convolutionSum_a_b}
Let $\alpha,\beta,N\in\mathbb{N}$ be such that
%\begin{eqnarray*}
$N=\alpha\beta, \alpha < \beta$, and $\alpha$ and $\beta$ are relatively prime. 
%\text{ and } \text{gcd}(\alpha,\beta)=1$.
%\end{eqnarray*}
Then
\begin{multline}
%\begin{split}
 ( \alpha\, L(q^{\alpha}) - \beta\, L(q^{\beta}) )^{2}  = 
 (\alpha - \beta)^{2} 
    + \sum_{n=1}^{\infty}\biggl(\ 240\,\alpha^{2}\,\sigma_{3}
    (\frac{n}{\alpha}) 
    + 240\,\beta^{2}\,\sigma_{3}(\frac{n}{\beta}) \\ 
    + 48\,\alpha\,(\beta-6n)\,\sigma(\frac{n}{\alpha}) 
    + 48\,\beta\,(\alpha-6n)\,\sigma(\frac{n}{\beta}) 
    - 1152\,\alpha\beta\,W_{(\alpha,\beta)}(n)\,\biggr)q^{n}. 
    \label{evalConvolClass-eqn-11}
%\end{split}
\end{multline}
\end{theorem}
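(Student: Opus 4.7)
The plan is to compute the Fourier expansion of the left-hand side directly, expand the square, and then identify the resulting Cauchy products with convolution sums. First I would substitute the $q$-expansion \autoref{evalConvolClass-eqn-3} into $\alpha L(q^{\alpha})-\beta L(q^{\beta})$ using the convention $\sigma(m)=0$ for $m\notin\mathbb{N}$, obtaining
\begin{equation*}
\alpha L(q^{\alpha})-\beta L(q^{\beta}) = (\alpha-\beta) - 24\sum_{n=1}^{\infty}\bigl(\alpha\,\sigma(\tfrac{n}{\alpha})-\beta\,\sigma(\tfrac{n}{\beta})\bigr)q^{n}.
\end{equation*}
Squaring yields three contributions: the constant $(\alpha-\beta)^{2}$, a linear-in-$q$ cross term giving $-48(\alpha-\beta)\sum_{n\ge 1}(\alpha\sigma(n/\alpha)-\beta\sigma(n/\beta))q^{n}$, and the Cauchy square $576\bigl(\sum_{n\ge 1}(\alpha\sigma(n/\alpha)-\beta\sigma(n/\beta))q^{n}\bigr)^{2}$.

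Next I would expand the Cauchy square. The coefficient of $q^{n}$ naturally splits into four pieces, and by tracking where the summation indices are forced to be multiples of $\alpha$ or $\beta$, the mixed terms collapse to $-2\alpha\beta\,W_{(\alpha,\beta)}(n)$, while the two pure terms give $\alpha^{2}W_{(1,1)}(n/\alpha)+\beta^{2}W_{(1,1)}(n/\beta)$, where as before $W_{(1,1)}(n/\alpha)=0$ unless $\alpha\mid n$. So far everything is bookkeeping; the key input is the classical Besge--Glaisher--Ramanujan identity
\begin{equation*}
W_{(1,1)}(n) \;=\; \tfrac{5}{12}\,\sigma_{3}(n) \;+\; \tfrac{1-6n}{12}\,\sigma(n),
\end{equation*}
cited in the first row of \autoref{introduction-table-1}, which I would apply (with $n$ replaced by $n/\alpha$ and $n/\beta$) to convert the two self-convolutions into $\sigma_{3}$ and $\sigma$ terms.

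Finally, I would collect the coefficient of $q^{n}$ by combining the contribution $576\alpha^{2}W_{(1,1)}(n/\alpha) = 240\alpha^{2}\sigma_{3}(n/\alpha)+48\alpha^{2}\sigma(n/\alpha)-288\alpha n\,\sigma(n/\alpha)$ with the cross term $-48(\alpha-\beta)\alpha\sigma(n/\alpha)$. The $48\alpha^{2}\sigma(n/\alpha)$ parts cancel, leaving exactly $48\alpha(\beta-6n)\sigma(n/\alpha)$, and symmetrically for the $\beta$ side; the mixed Cauchy piece supplies the coefficient $-1152\alpha\beta\,W_{(\alpha,\beta)}(n)$. Summing these matches \autoref{evalConvolClass-eqn-11} term by term.

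This argument is essentially a routine algebraic manipulation, so no step is truly an obstacle; the only point that requires care is the combinatorial reindexing in the Cauchy square, where one must verify that the condition $\alpha\mid k$ and $\beta\mid(n-k)$ in a cross term $\sigma(k/\alpha)\sigma((n-k)/\beta)$ forces a bijection between the surviving indices $(l,m)$ with $\alpha l+\beta m=n$ and the summation defining $W_{(\alpha,\beta)}(n)$. Once that identification is made, together with the Besge identity, the stated formula drops out by direct coefficient comparison.
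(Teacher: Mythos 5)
Your proposal is correct and is essentially the argument the paper relies on: the paper itself gives no proof here but defers to \cite{ntienjem2016c}, where the identity is obtained by exactly this route --- expanding $\bigl(\alpha L(q^{\alpha})-\beta L(q^{\beta})\bigr)^{2}$, reindexing the Cauchy product so that the mixed terms become $W_{(\alpha,\beta)}(n)$ and the pure terms become $W_{(1,1)}(n/\alpha)$ and $W_{(1,1)}(n/\beta)$, and then invoking the Besge--Glaisher--Ramanujan evaluation of $W_{(1,1)}$. Your coefficient bookkeeping (the cancellation of the $48\alpha^{2}\sigma(n/\alpha)$ terms leaving $48\alpha(\beta-6n)\sigma(n/\alpha)$, and the factor $-1152\alpha\beta$ from $576\cdot 2$) checks out exactly against \autoref{evalConvolClass-eqn-11}.
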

\begin{proof} %See \aA\ et al.\ \cite{alaca_alaca_ntienjem2016}.
See \eN\ \cite{ntienjem2016c}.
\end{proof}

%%%%%%%%%%%  Evaluating the Convolution Sums W_(a,b)(n)  %%%%%%%%%%%%

\section{Evaluating  $W_{(\alpha,\beta)}(n)$, where 
$\alpha\beta\in\mathbb{N}\setminus\N$}
\label{convolution_alpha_beta}

We carry out an explicit formula for the convolution sum $W_{(\alpha,\beta)}(n)$.

\subsection{Bases of $\E_{4}(\Gamma_{0}(\alpha\beta))$ and $\S_{4}(\Gamma_{0}(\alpha\beta))$}  \label{convolution_alpha_beta-bases}

Let $\EuScript{D}(\alpha\beta)$ denote the set of all positive divisors of 
$\alpha\beta$.

\aP\ \cite{pizer1976} has discussed the existence of a basis of the 
space of cusp forms of weight $k\geq 2$ for 
$\Gamma_{0}(\alpha\beta)$ when $\alpha\beta$ is not a perfect square. 
We apply the dimension formulae in \tM 's book  
\cite[Thrm 2.5.2,~p.~60]{miyake1989} or \waS 's book  
\cite[Prop.\ 6.1, p.\ 91]{wstein} to conclude that 
\begin{itemize} 
 \item  for the space of Eisenstein forms 
 \begin{equation} \label{dimension-Eisenstein}
  \text{dim}(\E_{4}(\Gamma_{0}(\alpha\beta)))=\underset{d|\alpha\beta}{\sum}
\,\varphi(\gcd(d,\frac{\alpha\beta}{d}))=m_{E},
\end{equation}
 where $m_{E}\in\mathbb{N}$	and $\varphi$ is the Euler's totient function. 
\item for the space of cusp forms  
$\text{dim}(\S_{4}(\Gamma_{0}(\alpha\beta)))=m_{S}$, where 
$m_{S}\in\mathbb{N}$. 
\end{itemize} 
We use \autoref{ligozat_theorem} $(i)-(v')$ to exhaustively determine 
as many elements of the space $\S_{4}(\Gamma_{0}(\alpha\beta))$ 
as possible. From these elements of the space 
$\S_{4}(\Gamma_{0}(\alpha\beta))$ we select relevant ones for the 
purpose of the determination of a basis of this space. 

Let $\EuScript{C}$ denote the set of Dirichlet characters 
$\chi=\legendre{m}{n}$ as assumed in \autoref{evalConvolClass-eqn-4}, 
where $m,n\in\mathbb{Z}$ and $\legendre{m}{n}$ is the Kronecker symbol. 
Let furthermore $D(\chi)\subseteq\EuScript{D}(\alpha\beta)$ denote the 
subset of $\EuScript{D}(\alpha\beta)$ associated with the character $\chi$. 

\begin{theorem} \label{basisCusp_a_b}
\begin{enumerate}
\item[\textbf{(a)}] The set $\EuScript{B}_{E}=\{\, M(q^{t})\,\mid\, t\in\EuScript{D}
(\alpha\beta)\,\}\cup\underset{\chi\in\EuScript{C}}{\bigcup}\{\, 
M_{\chi}(q^{t})\mid t\in D(\chi)\,\}$ 
is a basis of $\E_{4}(\Gamma_{0}(\alpha\beta))$.
\item[\textbf{(b)}] Let $1\leq i\leq m_{S}$ be positive integers, 
$\delta\in D(\alpha\beta)$ and $(r(i,\delta))_{i,\delta}$ be a 
%an $m_{S}\times d(\alpha\beta)$ 
table of the powers of\  $\eta(\delta\,z)$. Let furthermore  
$\EuFrak{B}_{\alpha\beta,i}(q)=\underset{\delta|\alpha\beta}{\prod}\eta^{r(i,\delta)}(\delta\,z)$ 
be selected elements of $\S_{4}(\Gamma_{0}(\alpha\beta))$. 
Then the set $\EuScript{B}_{S}=\{\, \EuFrak{B}_{\alpha\beta,i}(q)\,\mid\,
1\leq i\leq m_{S}\,\}$ is a basis of $\S_{4}(\Gamma_{0}(\alpha\beta))$. 
\item[\textbf{(c)}] The set $\EuScript{B}_{M}=\EuScript{B}_{E}\cup\EuScript{B}_{S}$ 
constitutes a basis of $\M_{4}(\Gamma_{0}(\alpha\beta))$. 
\end{enumerate}
\end{theorem}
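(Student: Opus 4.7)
The plan is to prove (a), (b), (c) in turn, using the vector-space decomposition $\M_{4}(\Gamma_{0}(\alpha\beta))=\E_{4}(\Gamma_{0}(\alpha\beta))\oplus\S_{4}(\Gamma_{0}(\alpha\beta))$ cited from \waS\ in Section \ref*{modForms} so that (c) is automatic once (a) and (b) are established. In each of (a) and (b) I would verify three things: membership in the correct space, correct cardinality (matching the dimension), and linear independence.

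For part (a), I would first check that every element of $\EuScript{B}_{E}$ lies in $\E_{4}(\Gamma_{0}(\alpha\beta))$. For the untwisted forms $M(q^{t})$ with $t\in\EuScript{D}(\alpha\beta)$, this is standard: $M(q)\in\E_{4}(\Gamma_{0}(1))$ and an Atkin--Lehner style level-raising by $z\mapsto tz$ embeds it in $\E_{4}(\Gamma_{0}(\alpha\beta))$ whenever $t\mid\alpha\beta$. For the twisted forms $M_{\chi}(q^{t})$, I would appeal directly to \autoref{Eisenstein-gen} together with Theorems~5.8 and~5.9 of \waS\ cited in the text, which state that $E_{4,\chi,\psi}(q^{t})\in\E_{4}(\Gamma_{0}(N))$ whenever $LR\,t\mid N$, and which furthermore guarantee that the full collection of such twists (as $\chi$, $\psi$, $t$ vary suitably) is linearly independent and spans $\E_{4}(\Gamma_{0}(\alpha\beta))$. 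Then I would count: the cardinality of $\EuScript{B}_{E}$ equals $|\EuScript{D}(\alpha\beta)|+\sum_{\chi\in\EuScript{C}}|D(\chi)|$, which by construction of $D(\chi)$ matches the dimension $m_{E}=\sum_{d\mid\alpha\beta}\varphi(\gcd(d,\alpha\beta/d))$ given in \autoref{dimension-Eisenstein}. Together with linear independence (from the Stein theorems), this identifies $\EuScript{B}_{E}$ as a basis.

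For part (b), I would first invoke \autoref{ligozat_theorem} conditions (i)--(iv) and (v$'$) for each candidate exponent vector $(r(i,\delta))_{\delta\mid\alpha\beta}$ to certify that each $\EuFrak{B}_{\alpha\beta,i}(q)=\prod_{\delta\mid\alpha\beta}\eta^{r(i,\delta)}(\delta z)$ really does lie in $\S_{4}(\Gamma_{0}(\alpha\beta))$, with the weight condition $\tfrac{1}{2}\sum_{\delta}r(i,\delta)=4$. The count $|\EuScript{B}_{S}|=m_{S}=\dim\S_{4}(\Gamma_{0}(\alpha\beta))$ is enforced by construction. Thus the only substantive step is linear independence: for this I would expand each $\EuFrak{B}_{\alpha\beta,i}(q)$ as a $q$-series, form the $m_{S}\times N_{0}$ matrix of Fourier coefficients for some $N_{0}\geq m_{S}$, and show this matrix has rank $m_{S}$. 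By the Sturm bound for $\Gamma_{0}(\alpha\beta)$ it suffices to examine coefficients up to $\lfloor \tfrac{k}{12}[\mathrm{SL}_{2}(\mathbb{Z}):\Gamma_{0}(\alpha\beta)]\rfloor$, so the verification is finite and mechanical (and is indeed the role of the symbolic computation packages mentioned in the introduction).

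Part (c) is then immediate: $\EuScript{B}_{E}$ and $\EuScript{B}_{S}$ are bases of complementary summands in the direct-sum decomposition $\M_{4}=\E_{4}\oplus\S_{4}$, so their disjoint union $\EuScript{B}_{M}$ is a basis of $\M_{4}(\Gamma_{0}(\alpha\beta))$. The main obstacle is clearly the linear-independence check in (b): the list of eta-quotients selected from \autoref{ligozat_theorem} is typically much longer than $m_{S}$, so the real work is in exhibiting an explicit subfamily of size exactly $m_{S}$ whose initial $q$-expansion matrix is invertible. Since the statement of the theorem is parametric in $\alpha\beta\in\mathbb{N}\setminus\N$, this step must in fact be carried out case by case in the specific applications of Sections \ref*{convolution_45_50} and \ref*{Revisited-Evaluations} rather than established uniformly here.
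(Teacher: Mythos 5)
Your proposal is correct and follows essentially the same route as the paper: verify membership of each listed form in the relevant space, match the cardinality of each candidate set to the known dimension $m_{E}$ or $m_{S}$, establish linear independence by examining the matrix of initial Fourier coefficients, and deduce (c) from the decomposition $\M_{4}(\Gamma_{0}(\alpha\beta))=\E_{4}(\Gamma_{0}(\alpha\beta))\oplus\S_{4}(\Gamma_{0}(\alpha\beta))$. The only differences are minor refinements on your side: you cite Stein's Theorems 5.8--5.9 for the independence of the twisted Eisenstein family where the paper runs an explicit determinant argument on the system obtained by equating coefficients, and in (b) you invoke the Sturm bound to make the rank check provably finite where the paper instead describes an iterative replacement procedure (backed by Pizer's existence theorem) for repairing a singular coefficient matrix --- both you and the paper correctly observe that this last step is ultimately a case-by-case computation.
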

\begin{remark} \label{basis-remark}
\begin{enumerate}
\item[(r1)] Each eta quotient $\EuFrak{B}_{\alpha\beta,i}(q)$ is expressible 
%can be expressed 
in the form 
$\underset{n=1}{\overset{\infty}{\sum}}\EuFrak{b}_{\alpha\beta,i}(n)q^{n}$, where 
for each $n\geq 1$ the coefficient $\EuFrak{b}_{\alpha\beta,i}(n)$ is an integer. 
\item[(r2)] %Let us consider the case $d=N$ in \autoref{ligozat_theorem} $(v')$. 
When we divide the sum that results from \autoref{ligozat_theorem} $(v')$, when $d=N$, 
by $24$, then we obtain the smallest positive degree of $q$ in 
$\EuFrak{B}_{\alpha\beta,i}(q)$.
%\item[(r3)] 
\end{enumerate}
\end{remark}

\begin{proof}
\begin{enumerate} 
\item[(a)] 
\waS\ \cite[Thrms 5.8 and 5.9, p.~86]{wstein} has shown that for each 
$t$ positive divisor of $\alpha\beta$ it holds that 
$M(q^{t})$ is in $\M_{4}(\Gamma_{0}(t))$. Since $\M_{4}(\Gamma_{0}(t))$ is a 
vector space, it also holds for each Kronecker symbol 
$\chi\in\EuScript{C}$ and $s\in D(\chi)$ that $M_{\chi}(q^{s})$ is in 
$\M_{4}(\Gamma_{0}(s))$.
Since the dimension of $\E_{4}(\Gamma_{0}(\alpha\beta))$ is finite, it suffices 
to show that $\EuScript{B}_{E}$ is linearly independent. 
Suppose that for each $\chi\in\EuScript{C},s\in D(\chi)$ we have 
$z(\chi)_{s}\in\mathbb{C}$ and  
that for each $t|\alpha\beta$ we have $x_{t}\in\mathbb{C}$.  Then 
\begin{equation*}
\underset{t|\alpha\beta}{\sum}x_{t}\,M(q^{t}) + \underset{\chi\in\EuScript{C}}
{\sum}\biggl(\,\underset{s\in D(\chi)}{\sum}z(\chi)_{s} M_{\chi}(q^{s})\,\biggr)=0. 
\end{equation*}
We recall that $\chi$ is a Kronecker symbol; therefore, for all 
$0\neq a\in\mathbb{Z}$ it holds that $\legendre{a}{0}=0$. 
Then we equate the coefficients of 
$q^{n}$ for $n\in D(\alpha\beta)\cup\underset{\chi\in\EuScript{C}}{\bigcup}
\{s| s\in D(\chi)\}$ 
to obtain the homogeneous system of linear equations in $m_{E}$ unknowns:
\begin{equation*}
\underset{u|\alpha\beta}{\sum}\,\sigma_{3}(\frac{t}{u})x_{u} + 
\underset{\chi\in\EuScript{C}}
{\sum}\,\underset{v\in D(\chi)}{\sum}\,\chi\sigma_{3}
(\frac{t}{v})Z(\chi)_{v} = 0,\qquad t\in D(\alpha\beta).
\end{equation*}
The determinant of the matrix of this homogeneous system of linear equations 
is not zero. Hence, the unique solution is 
$x_{t}=z(\chi)_{s}=0$ for all $t\in D(\alpha\beta)$ and for all 
$\chi\in\EuScript{C},s\in D(\chi)$. So, the set 
$\EuScript{B}_{E}$ is linearly independent and hence  
%the set $\EuScript{B}_{E}$ 
is a basis of $\E_{4}(\Gamma_{0}(\alpha\beta))$.

\item[(b)] We show that each $\EuFrak{B}_{\alpha\beta,i}(q)$, where $1\leq i\leq m_{S}$,
is in the space $\S_{4}(\Gamma_{0}(\alpha\beta))$. 
This is obviously the case  
since $\EuFrak{B}_{\alpha\beta,i}(q), 1\leq i\leq m_{S},$ are obtained using an 
exhaustive search which applies items (i)--(v$\prime$) in %Theorem 
\autoref{ligozat_theorem}. 

Since the dimension of $\S_{4}(\Gamma_{0}(\alpha\beta))$ is finite, it 
suffices to show that the set $\EuScript{B}_{S}$
%$\{\,\EuFrak{B}_{\alpha\beta,i}(q)\mid 1\leq i\leq m_{S}\}$ 
is linearly independent. Suppose that $x_{i}\in\mathbb{C}$ and 
$\underset{i=1}{\overset{m_{S}}{\sum}}\,x_{i}\,\EuFrak{B}_{\alpha\beta,i}(q)=0$. 
Then 
%it holds   
$\underset{i=1}{\overset{m_{S}}{\sum}}\,x_{i}\,\EuFrak{B}_{\alpha\beta,i}(q)
= \underset{n=1}{\overset{\infty}{\sum}}(\,\underset{i=1}{\overset{m_{S}}{\sum}}\,x_{i}\,\EuFrak{b}_{\alpha\beta,i}(n)\,)q^{n} = 0$
which gives the homogeneous system of $m_{S}$ linear equations in $m_{S}$ unknowns:
\begin{equation}  \label{basis-cusp-eqn-sol}
\underset{i=1}{\overset{m_{S}}{\sum}}\EuFrak{b}_{\alpha\beta,i}(n)\,x_{i}= 0,\qquad 
1\leq n\leq m_{S}.
\end{equation}
Two cases arise:
\begin{description}
\item[The smallest degree of $\EuFrak{B}_{\alpha\beta,i}(q)$ is $i$ for each 
$1\leq i\leq m_{S}$] Then the square matrix which corresponds to 
this homogeneous system of $m_{S}$ linear equations is triangular with $1$'s on the 
diagonal. Hence, the determinant of that matrix is $1$ and 
so the unique solution is $x_{i}=0$ for all $1\leq i\leq m_{S}$. 
\item[The smallest degree of $\EuFrak{B}_{\alpha\beta,i}(q)$ is $i$ for  
$1\leq i< m_{S}$] Let $n'$ be the largest positive integer such that 
$1\leq i\leq n'< m_{S}$. Let 
$\EuScript{B}_{S}'=\{\,\EuFrak{B}_{\alpha\beta,i}(q)\,\mid\,1\leq i\leq n'\,\}$ and 
$\EuScript{B}_{S}''=\{\,\EuFrak{B}_{\alpha\beta,i}(q)\,\mid\,n'< i\leq m_{S}\,\}$. Then 
$\EuScript{B}_{S}=\EuScript{B}_{S}'\cup\EuScript{B}_{S}''$ and we may consider 
$\EuScript{B}_{S}$ as an ordered set. By the case above, the set 
$\EuScript{B}_{S}'$ is linearly 
independent. Hence, the linear independence of the set $\EuScript{B}_{S}$ depends 
on that of the set $\EuScript{B}_{S}''$.
Let $A=(\EuFrak{b}_{\alpha\beta,i}(n))$ be the $m_{S}\times m_{S}$ matrix 
in \hyperref[basis-cusp-eqn-sol]{Equation \ref*{basis-cusp-eqn-sol}}. 
If $\text{det}(A)\neq 0$, then $x_{i}=0$ for all $1\leq i\leq m_{S}$ and we are 
done. Suppose that $\text{det}(A)= 0$. Then for 
some $n'< k\leq m_{S}$ there exists $\EuFrak{B}_{\alpha\beta,k}(q)$ which is causing 
the system of equations to be inconsistent. We substitute $\EuFrak{B}_{\alpha\beta,k}(q)$ with, say 
    $\EuFrak{B}_{\alpha\beta,k}'(q)$, which does not occur in $\EuScript{B}_{S}$ and 
    compute of the determinant of the new matrix $A$. 
    Since there are finitely many 
    $\EuFrak{B}_{\alpha\beta,k}(q)$ with $n'< k\leq m_{S}$ that may cause the 
    system of linear equations to be inconsistent and finitely many elements of 
    $\S_{4}(\Gamma_{0}(\alpha\beta))\setminus \EuScript{B}_{S}$, 
    the procedure will terminate with a consistent system of linear equations.  
Since \aP\ \cite{pizer1976} has proved the existence of a basis for the space 
of cusps, we will find a basis of $\S_{4}(\Gamma_{0}(\alpha\beta))$.
\end{description}
Therefore, the set $\{\,\EuFrak{B}_{\alpha\beta,i}(q)\mid 1\leq i\leq m_{S}\,\}$ 
is linearly independent and hence  
%the set $\{\,\EuFrak{B}_{\alpha\beta,i}(q)\mid 1\leq i\leq m_{S}\,\}$ 
is a basis of $\S_{4}(\Gamma_{0}(\alpha\beta))$.

\item[(c)] Since $\M_{4}(\Gamma_{0}(\alpha\beta))=\E_{4}(\Gamma_{0}(\alpha\beta))\oplus 
\S_{4}(\Gamma_{0}(\alpha\beta))$, the result follows from (a) and (b).
\end{enumerate} 
\end{proof}

Note that if $\EuScript{C}=\emptyset$, that means that the Dirichlet character 
is trivially one, then Theorem 3.1 proved by \eN\ \cite{ntienjem2016c} 
is obtained as an immediate corollary.

\subsection{Evaluating the convolution sum $W_{(\alpha,\beta)}(n)$}
\label{convolution_alpha_beta-gen}

We recall %Equation 
%\autoref{evalConvolClass-eqn-4} in the following. However, we 
the assumption that $\chi\neq 1$ 
since the case $\chi=1$ has been discussed by \eN\ \cite{ntienjem2016c}. 

\begin{lemma} \label{convolution-lemma_a_b}
Let $\alpha,\beta\in\mathbb{N}$ be such that $\text{gcd}(\alpha,\beta)=1$. 
Let furthermore $\EuScript{B}_{M}=\EuScript{B}_{E}\cup\EuScript{B}_{S}$ be a 
basis of $\M(\Gamma_{0}(\alpha\beta))$. Then there exist
$X_{\delta},Z(\chi)_{s}, Y_{j}\in\mathbb{C}, 1\leq j\leq m_{S}, 
\chi\in\EuScript{C}, s\in D(\chi)\text{ and } \delta|\alpha\beta$ such that  
\begin{multline}
(\alpha\, L(q^{\alpha}) - \beta\, L(q^{\beta}))^{2} 
 = \sum_{\delta|\alpha\beta}X_{\delta} + \sum_{\chi\in\EuScript{C}}\sum_{s\in
   D(\chi)}C_{0}Z(\chi)_{s}  \\
   +  \sum_{n=1}^{\infty}\biggl( 
 240\sum_{\delta|\alpha\beta}\sigma_{3}(\frac{n}{\delta})X_{\delta}  
+ \sum_{\chi\in\EuScript{C}}\,\sum_{s\in D(\chi)}\,\sigma_{3}(\frac{n}{s})\,
\chi\,Z(\chi)_{s} 
 + \sum_{j=1}^{m_{S}}\,\EuFrak{b}_{j}(n)\,Y_{j}\biggr)\,q^{n}.
\label{convolution_a_b-eqn-0}
\end{multline}
\end{lemma}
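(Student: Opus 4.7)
The plan is to combine Lemma 2.4 with Theorem 3.1(c) and then simply read off coefficients from the $q$-expansions. First, by Lemma 2.4 the form $(\alpha L(q^{\alpha})-\beta L(q^{\beta}))^{2}$ lies in $\M_{4}(\Gamma_{0}(\alpha\beta))$. Since $\EuScript{B}_{M}=\EuScript{B}_{E}\cup\EuScript{B}_{S}$ is a basis of this space by Theorem 3.1(c), there exist (unique) complex scalars $X_{\delta}$ indexed by $\delta\mid\alpha\beta$, $Z(\chi)_{s}$ indexed by $\chi\in\EuScript{C},\,s\in D(\chi)$, and $Y_{j}$ indexed by $1\leq j\leq m_{S}$, such that
\begin{equation*}
(\alpha L(q^{\alpha})-\beta L(q^{\beta}))^{2} \;=\; \sum_{\delta\mid\alpha\beta}X_{\delta}\,M(q^{\delta}) \;+\; \sum_{\chi\in\EuScript{C}}\sum_{s\in D(\chi)}Z(\chi)_{s}\,M_{\chi}(q^{s}) \;+\; \sum_{j=1}^{m_{S}}Y_{j}\,\EuFrak{B}_{\alpha\beta,j}(q).
\end{equation*}
This is the only conceptual step; the remainder is pure bookkeeping.

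Next, I would substitute the $q$-expansions recorded earlier in the paper. Equation (\ref{evalConvolClass-eqn-4a}) shows $M(q^{\delta})=1+240\sum_{n\geq 1}\sigma_{3}(n/\delta)q^{n}$, which contributes $X_{\delta}$ to the constant term and $240\sigma_{3}(n/\delta)X_{\delta}$ to the $q^{n}$-coefficient. Equation (\ref{evalConvolClass-eqn-4}), specialized from (\ref{Eisenstein-gen}) with $\psi=\chi$ and $k=4$, gives $M_{\chi}(q^{s})=C_{0}+\sum_{n\geq 1}\chi\sigma_{3}(n/s)q^{n}$, contributing $C_{0}Z(\chi)_{s}$ to the constant term and $\chi\sigma_{3}(n/s)Z(\chi)_{s}$ to the $q^{n}$-coefficient. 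Finally, by Remark 3.2(r1), each cusp-form basis element has expansion $\EuFrak{B}_{\alpha\beta,j}(q)=\sum_{n\geq 1}\EuFrak{b}_{\alpha\beta,j}(n)q^{n}$ with no constant term, contributing $\EuFrak{b}_{j}(n)Y_{j}$ to the $q^{n}$-coefficient. Collecting the constant terms and the $q^{n}$-coefficients yields exactly (\ref{convolution_a_b-eqn-0}).

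I do not anticipate any real obstacle, only notational care: one must respect the convention that $M(q^{\delta})$ has constant term $1$ while $M_{\chi}(q^{s})$ has constant term $C_{0}$ that depends on whether the conductor $L$ of $\chi$ equals $1$, and verify that the $\chi\sigma_{3}(n/s)$ in the statement is consistent with the twisted divisor function coming from (\ref{Eisenstein-gen}) with $\psi=\chi$. The only subtlety worth flagging is that the existence of the decomposition relies on Theorem 3.1, whose proof already handled the nontrivial linear-algebra content (invertibility of the relevant Vandermonde-type matrix on the Eisenstein side and an inductive triangulation on the cusp-form side); here we are merely invoking the resulting basis and recording coefficients. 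Consequently the lemma is an immediate corollary of the basis theorem together with the standard Fourier expansions of the Eisenstein series and $\eta$-quotients, with no further estimation needed.
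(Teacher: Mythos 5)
Your proposal is correct and follows essentially the same route as the paper: membership in $\M_{4}(\Gamma_{0}(\alpha\beta))$ via Lemma \ref{evalConvolClass-lema-1}, decomposition over the basis from Theorem \ref{basisCusp_a_b}(c), and substitution of the $q$-expansions of $M(q^{\delta})$, $M_{\chi}(q^{s})$ and the cusp-form basis elements. The paper's proof additionally sketches how the coefficients are pinned down by equating with \autoref{evalConvolClass-eqn-11} and solving the resulting linear system, but that step concerns computing the values rather than the existence claim the lemma actually asserts, so your argument suffices.
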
  %\label{convolution-indicator}
\begin{proof} That $( \alpha L(q^{\alpha}) - \beta L(q^{\beta}) )^{2}\in
\M_{4}(\Gamma_{0}(\alpha\beta))$ follows from  %Lemma 
\hyperref[evalConvolClass-lema-1]{Lemma \ref*{evalConvolClass-lema-1}}.  
Hence, by %Theorem 
\autoref{basisCusp_a_b}\,(c), there exist 
$X_{\delta},Z(\chi)_{s}, Y_{j}\in\mathbb{C}, 
1\leq j\leq m_{S}, \chi\in\EuScript{C}, s\in D(\chi)\text{ and }
\delta|\alpha\beta$, such that  
\begin{multline*}
( \alpha L(q^{\alpha}) - \beta L(q^{\beta}) )^{2}   =
\sum_{\delta|\alpha\beta}X_{\delta}\,M(q^{\delta}) + \sum_{\chi\in\EuScript{C}}\,
\sum_{s\in D(\chi)}Z(\chi)_{s}\,M_{\chi}(q^{s}) \\ 
+ \sum_{j=1}^{m_{S}}Y_{j}\,\EuFrak{B}_{j}(q) 
= \sum_{\delta|\alpha\beta}X_{\delta} + \sum_{\chi\in\EuScript{C}}\,
\sum_{s\in D(\chi)}\,C_{0}\,Z(\chi)_{s}
+ \sum_{n=1}^{\infty}\biggl(\, 
 240\,\sum_{\delta|\alpha\beta}\,\sigma_{3}(\frac{n}{\delta})\,X_{\delta}  \\
 + \sum_{\chi\in\EuScript{C}}\,\sum_{s\in D(\chi)}
 \,\chi\,\sigma_{3}(\frac{n}{s})\,Z(\chi)_{s} 
  +  \sum_{j=1}^{m_{S}}\,\EuFrak{b}_{j}(n)\,Y_{j}\, \biggr)\,q^{n}. 
 \end{multline*} 
We equate the right hand side of %Equation 
\autoref{convolution_a_b-eqn-0} with that of %Equation 
\autoref{evalConvolClass-eqn-11} to obtain
\begin{multline*}
\sum_{n=1}^{\infty}\biggl(\, 
 240\,\sum_{\delta|\alpha\beta}X_{\delta}\,\sigma_{3}(\frac{n}{\delta}) + 
  \sum_{\chi\in\EuScript{C}}\biggl(\,\sum_{s\in D(\chi)}\,\chi\,\sigma_{3}
  (\frac{n}{s})\,Z(\chi)_{s}\,\biggr)  %\\
 + \sum_{j=1}^{m_{S}}Y_{j}\,\EuFrak{b}_{j}(n)\, \biggr)\,q^{n} \\  
  = \sum_{n=1}^{\infty}\biggl(\ 240\,\alpha^{2}\,\sigma_{3}
    (\frac{n}{\alpha}) 
    + 240\,\beta^{2}\,\sigma_{3}(\frac{n}{\beta}) 
    + 48\,\alpha\,(\beta-6n)\,\sigma(\frac{n}{\alpha})  \\
    + 48\,\beta\,(\alpha-6n)\,\sigma(\frac{n}{\beta})   
    - 1152\,\alpha\beta\,W_{(\alpha,\beta)}(n)\,\biggr)q^{n}. 
\end{multline*}
We then take the coefficients of $q^{n}$ such that $n$ is in $D(\alpha\beta)$
and $1\leq n\leq m_{S}$, but as many as the unknown, $X_{1},\ldots,X_{\alpha\beta}$,   
$Z(\chi)_{s}$ for all $\chi\in\EuScript{C}, s\in D(\chi)$,   
and $Y_{1},\ldots,Y_{m_{S}}$, 
to obtain 
a system of $m_{E}+m_{S}$ linear equations whose unique solution determines 
the values of the unknowns. Hence, we obtain the result.
\end{proof}
For the following theorem, let for the sake of simplicity 
$X_{\delta},Z(\chi)_{s}$ and
$Y_{j}$ stand for their values obtained in the previous theorem.
\begin{theorem} \label{convolution_a_b}
Let $n$ be a positive integer. Then 
%\substack{ {(l,m)\in\mathbb{N}^{2}}\\{\alpha\,l+\beta\,m=n} }
\begin{align*}
 W_{(\alpha,\beta)}(n)   =  & 
  - \frac{5}{24\,\alpha\beta}\,\sum_{\substack{{\delta|\alpha\beta}\\{\delta\neq\alpha,\,\beta} }}\,
  \sigma_{3}(\frac{n}{\delta})\,X_{\delta}  %\\
 - \frac{1}{1152\,\alpha\beta}\,\sum_{\chi\in\EuScript{C}}\,\sum_{s\in 
 D(\chi)}Z(\chi)_{s}\,\sigma_{3}(\frac{n}{s})  \\ &
+ \frac{5}{24\,\alpha\,\beta}\,(\alpha^{2}-X_{\alpha})\,\sigma_{3}(\frac{n}{\alpha}) 
    + \frac{5}{24\,\alpha\,\beta}\,(\beta^{2}-X_{\beta})\,\sigma_{3}(\frac{n}{\beta})  \\ &
 - \sum_{j=1}^{m_{S}}\,\frac{1}{1152\,\alpha\beta}\,\EuFrak{b}_{j}(n)\,Y_{j}
 + (\frac{1}{24}-\frac{1}{4\beta}n)\sigma(\frac{n}{\alpha}) 
 + (\frac{1}{24}-\frac{1}{4\alpha}n)\sigma(\frac{n}{\beta}).  
\end{align*}
\end{theorem}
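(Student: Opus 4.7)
The plan is a coefficient-by-coefficient identification of the two $q$-series expansions of $(\alpha L(q^{\alpha}) - \beta L(q^{\beta}))^{2}$ that are already in hand. \autoref{convolution-lemma_a_b} provides one such expansion, whose $n$-th coefficient reads
\[
240 \sum_{\delta \mid \alpha\beta} \sigma_{3}(n/\delta)\,X_{\delta} \;+\; \sum_{\chi \in \EuScript{C}} \sum_{s \in D(\chi)} \chi\,\sigma_{3}(n/s)\,Z(\chi)_{s} \;+\; \sum_{j=1}^{m_{S}} \EuFrak{b}_{j}(n)\,Y_{j},
\]
with $X_{\delta}, Z(\chi)_{s}, Y_{j}$ uniquely determined by the linear system described in the proof of that lemma. \autoref{convolutionSum_a_b} supplies a second expansion whose $n$-th coefficient involves $\sigma_{3}(n/\alpha)$, $\sigma_{3}(n/\beta)$, $\sigma(n/\alpha)$, $\sigma(n/\beta)$ and the target quantity $W_{(\alpha,\beta)}(n)$.

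First I would equate the coefficients of $q^{n}$ from the two expansions, isolate the term $-1152\,\alpha\beta\,W_{(\alpha,\beta)}(n)$, and divide through by $-1152\,\alpha\beta$. The numerical reductions $240/1152 = 5/24$ and $48/1152 = 1/24$ yield all the rational prefactors in the statement; in particular $\tfrac{48\alpha(\beta - 6n)}{1152\,\alpha\beta} = \tfrac{1}{24} - \tfrac{n}{4\beta}$ produces the displayed coefficient of $\sigma(n/\alpha)$, and the $\alpha \leftrightarrow \beta$ swap gives the coefficient of $\sigma(n/\beta)$.

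The only piece of bookkeeping that requires care is the treatment of the divisor sum $\sum_{\delta \mid \alpha\beta}\sigma_{3}(n/\delta)\,X_{\delta}$. Because $\alpha$ and $\beta$ are themselves divisors of $\alpha\beta$, the summands $X_{\alpha}\sigma_{3}(n/\alpha)$ and $X_{\beta}\sigma_{3}(n/\beta)$ have to be pulled out of the sum and combined with the terms $240\alpha^{2}\sigma_{3}(n/\alpha)$ and $240\beta^{2}\sigma_{3}(n/\beta)$ arriving from \autoref{convolutionSum_a_b}. This regrouping produces the factors $(\alpha^{2}-X_{\alpha})$ and $(\beta^{2}-X_{\beta})$ multiplying $\sigma_{3}(n/\alpha)$ and $\sigma_{3}(n/\beta)$ respectively, and leaves the restricted sum over $\delta\mid\alpha\beta$ with $\delta\neq\alpha,\beta$ exactly as displayed.

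There is no substantive obstacle. The uniqueness of $q$-expansions, together with the basis theorem \autoref{basisCusp_a_b} used in the proof of \autoref{convolution-lemma_a_b}, legitimizes the coefficient comparison; the rest is arithmetic. I therefore expect the main effort to be simply writing down the equated coefficients cleanly and performing the regrouping of $\delta=\alpha$ and $\delta=\beta$ terms, after which the stated formula for $W_{(\alpha,\beta)}(n)$ falls out immediately.
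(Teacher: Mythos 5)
Your proposal is correct and follows exactly the paper's own argument: the paper likewise equates the coefficient of $q^{n}$ in \autoref{convolution_a_b-eqn-0} with that in \autoref{evalConvolClass-eqn-11}, solves for $1152\,\alpha\beta\,W_{(\alpha,\beta)}(n)$, and divides through, with the regrouping of the $\delta=\alpha$ and $\delta=\beta$ terms producing the $(\alpha^{2}-X_{\alpha})$ and $(\beta^{2}-X_{\beta})$ factors. Your arithmetic checks ($240/1152=5/24$ and $48\alpha(\beta-6n)/(1152\alpha\beta)=\tfrac{1}{24}-\tfrac{n}{4\beta}$) match the stated coefficients.
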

\begin{proof}  We equate the right hand side of %Equation 
\autoref{convolution_a_b-eqn-0} with that of %Equation 
\autoref{evalConvolClass-eqn-11} to yield  
 \begin{align*} 
 1152\,\alpha\beta\,W_{(\alpha,\beta)}(n) = & 
 - 240\,\sum_{\delta|\alpha\beta}\,
 %\biggl( 1 + \sum_{\chi\in\EuScript{C}}\,\varepsilon(\delta)\,\chi\,\biggr)\,
 \sigma_{3}(\frac{n}{\delta})\,X_{\delta}  %\\
 - \sum_{\chi\in\EuScript{C}}\,\sum_{s\in D(\chi)}\,Z(\chi)_{s}\,
\sigma_{3}(\frac{n}{s})  \\ &
 - \sum_{j=1}^{m_{S}}\,\EuFrak{b}_{j}(n)\,Y_{j} + 240\,\alpha^{2}\,\sigma_{3}
    (\frac{n}{\alpha}) %\\
    + 240\,\beta^{2}\,\sigma_{3}(\frac{n}{\beta}) \\ & 
    + 48\,\alpha\,(\beta-6n)\,\sigma(\frac{n}{\alpha})  %\\
    + 48\,\beta\,(\alpha-6n)\,\sigma(\frac{n}{\beta}).   
\end{align*}
We then solve for $W_{(\alpha,\beta)}(n)$ to obtain the stated result.
\end{proof}
 \begin{remark}
\begin{enumerate}
\item[(a)] 
As observed by \eN\ \cite{ntienjem2016c}, the following part of 
%Theorem 
\autoref{convolution_a_b} depends only on $n$, $\alpha$ and $\beta$ but 
not on the basis of the modular space $\M_{4}(\Gamma_{0}(\alpha\beta))$: 
$$(\frac{1}{24}-\frac{1}{4\beta}n)\sigma(\frac{n}{\alpha}) 
 + (\frac{1}{24}-\frac{1}{4\alpha}n)\sigma(\frac{n}{\beta}). $$
\item[(b)] If $\EuScript{C}=\emptyset$, that means that the 
Dirichlet character is trivially one, then Theorem 3.2 proved by \eN\  
\cite{ntienjem2016c} is obtained as an immediate corollary of 
\autoref{convolution_a_b}.
\item[(c)] For all $\chi\in\EuScript{C}$ and for all $s\in D(\chi)$ the 
value of $Z(\chi)_{s}$ appears to be zero in all explicit 
examples evaluated as yet. Will the value of $Z(\chi)_{s}$ always  
vanish for all $\alpha\beta$ belonging to this class?
\end{enumerate}
\end{remark}

We now have the prerequisite to determine a formula for the number of representations of a positive integer  $n$  by the Octonary  quadratic form.

%%%%%%%%%%% Number of representation of n %%%%%

\section{Number of Representations of a positive Integer  $n$  by the Octonary  
Quadratic Form  \autoref{introduction-eq-1}
}
\label{representations_a_b}

In this section, we only consider those $\alpha\beta\in\mathbb{N}\setminus\N$ 
such that $\alpha\beta\equiv 0\pmod{4}$. That means, for a given $\kappa\in\mathbb{N}$, 
we restrict the form of $\alpha\beta$ to   
\begin{equation}  \label{representations_a_b-eqn-1}
\alpha\beta=2^{e_{1}}\,\underset{j>1}{\overset{\kappa}
{\prod}}\,p_{j}^{e_{j}}, \quad \text{where $e_{1}\geq 2$ and $e_{j}\geq 2$ for at 
least one $2\leq j\leq\kappa$}. 
\end{equation}

\subsection{Determining $(a,b)\in\mathbb{N}^{2}$}  
\label{deter-representations_a_b}
This approach is similar to the one given by \eN\ \cite{ntienjem2016c}.

Let $\Lambda=2^{e_{1}-2}\,\underset{j>1}{\overset{\kappa}
{\prod}}\,p_{j}^{e_{j}}$,
the set $P=\{\,p_{1}=2^{e_{1}-2}\,\}\cup\{\,p_{j}^{e_{j}}\,|\,1<j\leq\kappa\,\}$, and 
$\EuScript{P}(P)$ be the power set of $P$. Then for each $Q\in\EuScript{P}(P)$ 
we define $\mu(Q)=\underset{q\in Q}{\prod}\,q$. 
We set $\mu(\emptyset)=1$ if $Q=\{\emptyset\}$.
Let now 
\begin{multline*}
\Omega_{4}=\{(\mu(Q_{1}),\mu(Q_{2}))~|~\text{ there exist } 
Q_{1},Q_{2}\in\EuScript{P}(P) \text{ such that }  \\ 
\text{gcd}(\mu(Q_{1}),\mu(Q_{2}))=1\,\text{ and } 
\mu(Q_{1})\,\mu(Q_{2})=\Lambda\,\}.
\end{multline*}
Observe that 
%if $\Lambda> 1$, then 
$\Omega_{4}\neq\emptyset$ since $(1,\Lambda)\in\Omega_{4}$.
\begin{proposition} \label{representation-prop-1}
Suppose that $\alpha\beta$ has the above form and suppose that $\Omega_{4}$ is 
defined as above. Then for all $n\in\mathbb{N}$ the set $\Omega_{4}$ 
contains all pairs $(a,b)\in\mathbb{N}^{2}$ such that $N_{(a,b)}(n)$ can 
be obtained by applying $W_{(\alpha,\beta)}(n)$.
\end{proposition}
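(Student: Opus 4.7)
The plan is to translate the question of which pairs $(a,b)$ yield $N_{(a,b)}(n)$ through an application of the evaluated sum $W_{(\alpha,\beta)}(n)$ into a purely combinatorial classification of coprime factorizations of $\Lambda = \alpha\beta/4$, and then observe that $\Omega_{4}$ is exactly this classification.

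First I would invoke the generating function identity
\[
\sum_{n\geq 0} N_{(a,b)}(n)\,q^{n} = \theta^{4}(q^{a})\,\theta^{4}(q^{b}),
\]
together with the classical decomposition
\[
\theta^{4}(q) = 1 + 8\sum_{n\geq 1}\bigl(\sigma(n) - 4\,\sigma(n/4)\bigr)q^{n},
\]
which expresses $\theta^{4}(q)$ as a linear combination of $L(q)$ and $L(q^{4})$. Substituting $q\mapsto q^{a}$ and $q\mapsto q^{b}$, multiplying, and reading off the coefficient of $q^{n}$, one obtains $N_{(a,b)}(n)$ as a linear combination of the divisor sums $\sigma(n/a)$, $\sigma(n/b)$, $\sigma(n/4a)$, $\sigma(n/4b)$ together with the four convolution sums $W_{(a,b)}(n)$, $W_{(a,4b)}(n)$, $W_{(4a,b)}(n)$ and $W_{(4a,4b)}(n)$.

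Next I would argue that these four convolution sums can be supplied simultaneously by a single evaluation of $W_{(\alpha,\beta)}(n)$ at level $\alpha\beta$ precisely when the ambient level $\operatorname{lcm}(ab,\, 4ab,\, a\cdot 4b,\, 4a\cdot b)$ equals $\alpha\beta$. Under the standard reduction to $\gcd(a,b)=1$ (already invoked in Section~\ref{convolutionSumsEqns} and justified via \aA\ et al.), this lcm collapses to $4ab$, so the requirement becomes $4ab=\alpha\beta$, equivalently $ab=\Lambda$ with $\gcd(a,b)=1$. Unique factorization, applied to $\Lambda = 2^{e_{1}-2}\prod_{j>1}p_{j}^{e_{j}}$, then gives a bijection between coprime factorizations of $\Lambda$ and partitions of the set $P=\{\,2^{e_{1}-2}\,\}\cup\{\,p_{j}^{e_{j}}\mid 1<j\leq\kappa\,\}$ into two disjoint blocks $Q_{1},Q_{2}$, since each prime‑power component must be assigned entirely to $a$ or to $b$; any such partition produces exactly one element $(a,b)=(\mu(Q_{1}),\mu(Q_{2}))\in\Omega_{4}$, and conversely every element of $\Omega_{4}$ arises this way.

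The main obstacle is making precise what ``$N_{(a,b)}(n)$ can be obtained by applying $W_{(\alpha,\beta)}(n)$'' formally means; the intended interpretation is that the generating function of $N_{(a,b)}(n)$ lies in $\M_{4}(\Gamma_{0}(\alpha\beta))$ and that the four convolution sums appearing in the expansion above are all retrievable from the evaluation of $W_{(\alpha,\beta)}(n)$ furnished by \autoref{convolution_a_b}. Fixing this reading, the combination of the level constraint $4ab=\alpha\beta$ and the coprimality $\gcd(a,b)=1$ yields membership $(a,b)\in\Omega_{4}$, establishing the asserted containment; one should note that no $(a,b)$ with $ab$ a proper divisor of $\Lambda$ is admissible here, since such a pair would reduce to a sub-level strictly smaller than $\alpha\beta$ and would not genuinely require $W_{(\alpha,\beta)}(n)$.
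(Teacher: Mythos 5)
Your argument is correct and takes essentially the same route as the paper: the paper's own ``proof'' is only a citation of \cite{ntienjem2016c}, but its substance, visible in \autoref{representations-theor_a_b}, is exactly your expansion of $N_{(a,b)}(n)$ via $r_{4}(n)=8\sigma(n)-32\sigma(\frac{n}{4})$ into $W_{(a,b)}(n)$, $W_{(4a,b)}(n)$, $W_{(a,4b)}(n)$ and $W_{(a,b)}(\frac{n}{4})$, which forces $4ab=\alpha\beta$, i.e.\ $ab=\Lambda$ with $\gcd(a,b)=1$, and unique factorization then identifies such pairs with the coprime splittings $(\mu(Q_{1}),\mu(Q_{2}))$ of $P$, i.e.\ with $\Omega_{4}$. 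The one small imprecision is your lcm step: if one of $a,b$ is even, the corresponding sum $W_{(4a,b)}(n)$ or $W_{(a,4b)}(n)$ reduces to a strictly smaller level, but the other of the two always has coprime arguments and reduced level exactly $4ab$, so the conclusion $4ab=\alpha\beta$ and hence $(a,b)\in\Omega_{4}$ stands.
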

\begin{proof} Similar to the proof given by \eN\ \cite{ntienjem2016c}.
\end{proof}

\subsection{Number of Representations of a positive Integer}
\label{represent_a_b}
As an immediate application of %Theorem 
\autoref{convolution_a_b} the number 
of representations of a positive integer $n$  by the octonary quadratic form 
$a\,(x_{1}^{2} +x_{2}^{2} + x_{3}^{2} + x_{4}^{2})
+ b\,(x_{5}^{2} + x_{6}^{2} + x_{7}^{2} + x_{8}^{2})$ 
is determined.

Let $n\in\mathbb{N}$ and the number of representations
of $n$ by the quaternary quadratic form  $x_{1}^{2} +x_{2}^{2}+x_{3}^{2} +
x_{4}^{2}$ be denoted by $r_{4}(n)$. That means, 
\begin{equation*}
r_{4}(n)=\text{card}(\{(x_{1},x_{2},x_{3},x_{4})\in\mathbb{Z}^{4}~|~ m = x_{1}^{2} +x_{2}^{2} + x_{3}^{2} + x_{4}^{2}\}).
\end{equation*}
We set $r_{4}(0) = 1$. \ksW\ \cite{williams2011} has shown that for all
$n\in\mathbb{N}$ 
\begin{equation}
r_{4}(n) = 8\sigma(n) - 32\sigma(\frac{n}{4}). \label{representations-eqn-4-1}
\end{equation}

Now, let the number of representations of $n$ by the octonary quadratic form 
\begin{equation*}
a\,(x_{1}^{2} +x_{2}^{2} + x_{3}^{2} + x_{4}^{2})
+ b\,(x_{5}^{2} + x_{6}^{2} + x_{7}^{2} + x_{8}^{2})
\end{equation*}
be $N_{(a,b)}(n)$. That means,  
\begin{multline*}
%\begin{split}
N_{(a,b)}(n) 
=\text{card}
(\{(x_{1},x_{2},x_{3},x_{4},x_{5},x_{6},x_{7},x_{8})\in\mathbb{Z}^{8}~|~
n = a\,( x_{1}^{2} +x_{2}^{2}  \\
    + x_{3}^{2} + x_{4}^{2} ) + 
b\,( x_{5}^{2} +x_{6}^{2} + x_{7}^{2} + x_{8}^{2}) \}).
%\end{split}
\end{multline*}
We then derive the following result:
\begin{theorem} \label{representations-theor_a_b}
Let $n\in\mathbb{N}$ and $(a,b)\in\Omega_{4}$. Then  
\begin{multline*}
%\begin{split}
N_{(a,b)}(n)  = 8\,\sigma(\frac{n}{a}) - 32\,\sigma(\frac{n}{4\,a}) 
+ 8\,\sigma(\frac{n}{b}) - 32\,\sigma(\frac{n}{4b}) + 64\,W_{(a,b)}(n) \\
+ 1024\,W_{(a,b)}(\frac{n}{4}) - 256\,\biggl( W_{(4a,b)}(n) + W_{(a,4b)}(n) \biggr).
\end{multline*}
\end{theorem}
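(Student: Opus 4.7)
The plan is to convert the representation count into a convolution of $r_{4}$-values and then exploit the explicit formula \autoref{representations-eqn-4-1} together with the definition \autoref{def-convolution_sum} of the convolution sum. Concretely, by definition
\begin{equation*}
N_{(a,b)}(n)=\sum_{\substack{(n_{1},n_{2})\in\mathbb{Z}_{\geq 0}^{2}\\ a n_{1}+b n_{2}=n}}r_{4}(n_{1})\,r_{4}(n_{2}),
\end{equation*}
and I would split this sum according to whether $n_{1}=0$, $n_{2}=0$, or both are positive. Using the convention $r_{4}(0)=1$, the first two boundary contributions produce the single-variable terms $r_{4}(n/b)$ and $r_{4}(n/a)$, which by \autoref{representations-eqn-4-1} are exactly $8\,\sigma(n/b)-32\,\sigma(n/(4b))$ and $8\,\sigma(n/a)-32\,\sigma(n/(4a))$, respectively (with the usual convention that $\sigma$ of a non-integer or of zero vanishes).

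Next I would treat the main term $\sum_{n_{1},n_{2}\geq 1,\;an_{1}+bn_{2}=n}r_{4}(n_{1})\,r_{4}(n_{2})$ by substituting \autoref{representations-eqn-4-1} into each factor and expanding the product
\begin{equation*}
r_{4}(n_{1})\,r_{4}(n_{2})=64\,\sigma(n_{1})\sigma(n_{2})-256\,\sigma(n_{1}/4)\sigma(n_{2})-256\,\sigma(n_{1})\sigma(n_{2}/4)+1024\,\sigma(n_{1}/4)\sigma(n_{2}/4).
\end{equation*}
Summed against the constraint $an_{1}+bn_{2}=n$, the first term is by definition $64\,W_{(a,b)}(n)$. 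For the cross terms I would re-index: setting $m_{1}=n_{1}/4$ turns $an_{1}+bn_{2}=n$ into $(4a)m_{1}+bn_{2}=n$ and $\sigma(n_{1}/4)$ into $\sigma(m_{1})$, yielding $-256\,W_{(4a,b)}(n)$; the mirror substitution on $n_{2}$ gives $-256\,W_{(a,4b)}(n)$. Finally, in the last term, setting $m_{1}=n_{1}/4$ and $m_{2}=n_{2}/4$ simultaneously converts the constraint to $am_{1}+bm_{2}=n/4$, contributing $1024\,W_{(a,b)}(n/4)$ (which vanishes unless $4\mid n$, consistent with $\sigma(n_{j}/4)=0$ otherwise).

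Collecting the boundary terms and the four main terms gives the stated identity. I expect no serious obstacle: the whole argument is bookkeeping plus two straightforward re-indexings. The only point that requires a moment of care is the handling of the $n_{1}=0$ or $n_{2}=0$ endpoints, where $r_{4}(0)=1$ rather than $8\sigma(0)-32\sigma(0)=0$; extending the inner sums to include these cases with $\sigma(0)=0$ would double-count or miss them, so I would keep the endpoints separate as above. No appeal to modular forms or to the hypothesis $(a,b)\in\Omega_{4}$ is needed for the algebraic identity itself; that hypothesis is only what guarantees, via \autoref{representation-prop-1}, that all four convolution sums on the right-hand side are accessible from the evaluation of $W_{(\alpha,\beta)}(n)$.
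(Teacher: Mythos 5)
Your proposal is correct and follows essentially the same route as the paper's proof: split off the $r_{4}(0)$ boundary terms, substitute the formula $r_{4}(n)=8\sigma(n)-32\sigma(n/4)$, expand the product into four terms, and re-index by $l\mapsto 4l$ and/or $m\mapsto 4m$ to recognize $W_{(4a,b)}(n)$, $W_{(a,4b)}(n)$ and $W_{(a,b)}(n/4)$. Your added remarks on the endpoint convention $r_{4}(0)=1$ and on the role of the hypothesis $(a,b)\in\Omega_{4}$ are accurate refinements of the same argument.
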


\begin{proof} 
It holds that 
\begin{multline*}
N_{(a,b)}(n)  = \sum_{\substack{
{(l,m)\in\mathbb{N}^{2}} \\ {a\,l+b\,m=n}
 }}r_{4}(l)r_{4}(m) 
   = r_{4}(\frac{n}{a})r_{4}(0) + r_{4}(0)r_{4}(\frac{n}{b})  %\\
   + \sum_{\substack{
{(l,m)\in\mathbb{N}^{2}} \\ {a\,l+b\,m=n}
 }}r_{4}(l)r_{4}(m)
\end{multline*}
We use %Equation 
\autoref{representations-eqn-4-1} to derive 
\begin{multline*}
%\begin{split}
N_{(a,b)}(n)  = 8\sigma(\frac{n}{a}) - 32\sigma(\frac{n}{4a}) + 8\sigma(\frac{n}{b}) -
32\sigma(\frac{n}{4b}) \\
   + \sum_{\substack{
{(l,m)\in\mathbb{N}^{2}} \\ {al+bm=n}
  }} (8\sigma(l) - 32\sigma(\frac{l}{4}))(8\sigma(m) - 32\sigma(\frac{m}{4})). 
\end{multline*}
We observe that 
\begin{multline*}
(8\sigma(l) - 32\sigma(\frac{l}{4}))(8\sigma(m) - 32\sigma(\frac{m}{4}))  = 
64\sigma(l)\sigma(m) - 256\sigma(\frac{l}{4})\sigma(m) \\
   - 256\sigma(l)\sigma(\frac{m}{4})  + 1024\sigma(\frac{l}
   {4})\sigma(\frac{m}{4}).
\end{multline*}
We assume in the sequel of this proof that the evaluation of 
\begin{equation*}
W_{(a,b)}(n) = \sum_{\substack{
{(l,m)\in\mathbb{N}^{2}} \\ {al+bm=n}
 }}\sigma(l)\sigma(m), 
\end{equation*}
$W_{(4a,b)}(n)$ and $W_{(a,4b)}(n)$ are known. We map $l$ to $4l$ and 
$m$ to $4m$ to derive  
\begin{equation*}
W_{(4a,b)}(n) = \sum_{\substack{
{(l,m)\in\mathbb{N}^{2}} \\ {al+bm=n}
}}\sigma(\frac{l}{4})\sigma(m) 
 = \sum_{\substack{
{(l,m)\in\mathbb{N}^{2}} \\ {4a\,l+bm=n}
 }}\sigma(l)\sigma(m)
\end{equation*}
and 
\begin{equation*}
W_{(a,4b)}(n) = \sum_{\substack{
{(l,m)\in\mathbb{N}^{2}} \\ {al+bm=n}
}}\sigma(l)\sigma(\frac{m}{4})  = \sum_{\substack{
{(l,m)\in\mathbb{N}^{2}} \\ {al+4b\,m=n}
 }}\sigma(l)\sigma(m),
\end{equation*}
%We assume that the evaluation of $W_{(a,4b)}(n)$ is known. 
respectively. 
We simultaneously map $l$ to $4l$ and $m$ to $4m$ to infer 
\begin{equation*}
\sum_{\substack{
{(l,m)\in\mathbb{N}^{2}} \\ {al+bm=n}
}}\sigma(\frac{l}{4})\sigma(\frac{m}{4}) 
= \sum_{\substack{
{(l,m)\in\mathbb{N}^{2}} \\ {al+bm=\frac{n}{4}}
 }}\sigma(l)\sigma(m)
 = W_{(a,b)}(\frac{n}{4}).
\end{equation*}
%We assume that the eval\-u\-a\-tion of $W_{(a,b)}(n)$ is known.

We put these eval\-u\-a\-tions to\-gether to ob\-tain the stated re\-sult for 
$N_{(a,b)}(n)$. 
\end{proof}

%%%%%%%%%%% Number of representation of n %%%%%

\section{Number of Representations of a Positive Integer $n$  by the Octonary 
Quadratic Form \autoref{introduction-eq-2}
} 
\label{representations_c_d}
We now consider those $\alpha\beta\in\mathbb{N}\setminus\N$ 
for which $\alpha\beta\equiv 0\pmod{3}$. That means, for a given $\kappa\in\mathbb{N}$,
we consider the restricted form of $\alpha\beta$ 
\begin{equation}  \label{representations_c_d-eqn-1}
\alpha\beta=2^{e_{1}}\cdot 3^{e_{2}}\,
\underset{j>2}{\overset{\kappa}{\prod}}\,p_{j}^{e_{j}}, \,
\text{where } p_{j}\geq 5 \text{ and at least for one $2\leq j\leq\kappa$ we have } e_{j}\geq 2. 
\end{equation}

\subsection{Determining $(c,d)\in\mathbb{N}^{2}$} 
\label{deter-representations_c_d}
This method is similar to the one given by \eN\ \cite{ntienjem2016c}.
The construction is almost the same as the one given in 
\hyperref[deter-representations_a_b]{Subsection \ref*{deter-representations_a_b}}. 

Let $\Delta=2^{e_{1}}\cdot 3^{e_{2}-1}\,\underset{j>2}{\overset{\kappa}
{\prod}}\,p_{j}^{e_{j}}$,
the set $P=\{p_{1}=2^{e_{1}}, p_{2}=3^{e_{2}-1}\,\}\cup\{\,p_{j}^{e_{j}}\,|\,2\leq j\leq\kappa\,\}$, and 
$\EuScript{P}(P)$ be the power set of $P$. Then for each $Q\in\EuScript{P}(P)$ 
we define $\mu(Q)=\underset{q\in Q}{\prod}\,q$. We set $\mu(\emptyset)=1$ if 
$Q=\{\emptyset\}$.
Let now $\Omega_{3}$ be defined as in 
\hyperref[deter-representations_a_b]{Subsection \ref*{deter-representations_a_b}} 
with $\Delta$ instead of $\Lambda$, i.e., 
\begin{multline*}
\Omega_{3}=\{(\mu(Q_{1}),\mu(Q_{2}))~|~\text{ there exist } Q_{1},Q_{2}\in\EuScript{P}(P) 
\text{ such that }  \\ 
\text{gcd}(\mu(Q_{1}),\mu(Q_{2}))=1\,\text{ and } 
\mu(Q_{1})\,\mu(Q_{2})=\Delta\,\}.
\end{multline*}
Note that 
%if $\Delta\geq 1$, then 
$\Omega_{3}\neq\emptyset$ since $(1,\Delta)\in\Omega_{3}$.
\begin{proposition} \label{representation-prop-2}
Suppose that $\alpha\beta$ has the above form and Suppose that $\Omega_{3}$ 
be defined as above. Then for all $n\in\mathbb{N}$ the set $\Omega_{3}$ 
contains all pairs $(c,d)\in\mathbb{N}^{2}$ such that $R_{(c,d)}(n)$ can 
be obtained by applying $W_{(\alpha,\beta)}(n)$.
\end{proposition}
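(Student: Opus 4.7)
The plan is to mirror the argument that \eN\ \cite{ntienjem2016c} used to prove the analogous statement for $\Omega_{4}$. The starting point is Lomadze's classical evaluation of the number of representations of a positive integer by the quaternary form $x_{1}^{2}+x_{1}x_{2}+x_{2}^{2}+x_{3}^{2}+x_{3}x_{4}+x_{4}^{2}$, which expresses that count as a constant multiple of $\sigma(n)-3\sigma(n/3)$. Writing
\begin{equation*}
R_{(c,d)}(n) = \sum_{\substack{(l,m)\in\mathbb{N}^{2} \\ cl+dm=n}} r(l)\,r(m) + \text{boundary terms at } l=0 \text{ or } m=0,
\end{equation*}
and inserting Lomadze's formula for $r$, I would expand the product $(\sigma(l)-3\sigma(l/3))(\sigma(m)-3\sigma(m/3))$ and reindex by $l\mapsto 3l$ or $m\mapsto 3m$ (exactly as in the proof of \autoref{representations-theor_a_b}). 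This presents $R_{(c,d)}(n)$ as an explicit linear combination of $W_{(c,d)}(n)$, $W_{(3c,d)}(n)$, $W_{(c,3d)}(n)$ and $W_{(c,d)}(n/3)$, together with four $\sigma$-terms in $n/c,\,n/d,\,n/(3c),\,n/(3d)$.

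Next, I would extract the precise condition on $(c,d)$ under which all four convolution sums fall within the class evaluated in Section \ref{convolution_alpha_beta}. The divisor terms are always available, so the constraint comes only from the convolution sums. Their levels are $cd,\,3cd,\,3cd,\,cd$ respectively, and both must agree with the level $\alpha\beta = 2^{e_{1}}\cdot 3^{e_{2}}\prod_{j>2}p_{j}^{e_{j}}$ assumed in \autoref{representations_c_d-eqn-1}. Since $e_{2}\geq 1$, dividing out a single factor of $3$ gives $\Delta=\alpha\beta/3$, and the requirement reduces to $cd=\Delta$ with $\gcd(c,d)=1$ (the coprimality being inherited from the assumption $\gcd(\alpha,\beta)=1$ used throughout Section \ref{convolution_alpha_beta}).

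Finally, I would translate the condition "$cd=\Delta$ with $\gcd(c,d)=1$" into the set-theoretic language of the definition of $\Omega_{3}$. Because the elements of $P=\{2^{e_{1}},3^{e_{2}-1}\}\cup\{p_{j}^{e_{j}}\mid 2<j\leq\kappa\}$ are pairwise coprime prime powers (with $3^{e_{2}-1}=1$ in the boundary case $e_{2}=1$, which is covered by the convention $\mu(\emptyset)=1$), every coprime factorization $\Delta=cd$ corresponds to a unique splitting $P=Q_{1}\sqcup Q_{2}$ with $c=\mu(Q_{1})$ and $d=\mu(Q_{2})$. This is precisely the defining condition for $(c,d)\in\Omega_{3}$, which completes the proof.

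The main obstacle will be the careful bookkeeping in the expansion step: one must verify that no further convolution sums appear beyond the four listed, and that the reindexing $l\mapsto 3l$, $m\mapsto 3m$ correctly matches $W_{(c,d)}(n/3)$ to the form evaluated at level $cd$. Once that identification is in place, the remainder of the argument is a direct combinatorial translation.
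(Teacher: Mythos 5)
Your proposal is correct and follows essentially the same route the paper intends: the paper's proof is only a pointer to the analogous Proposition for $\Omega_{4}$ and ultimately to \cite{ntienjem2016c}, and what you reconstruct (expand $R_{(c,d)}(n)$ via $s_{4}(n)=12\sigma(n)-36\sigma(n/3)$, read off the levels $cd$ and $3cd$ of the resulting convolution sums, and match the coprime factorizations of $\Delta=\alpha\beta/3$ with the splittings of $P$) is exactly the argument underlying \autoref{representations-theor-c_d} and the definition of $\Omega_{3}$. The only loose phrasing is the claim that both levels "must agree with $\alpha\beta$" (only $3cd=\alpha\beta$ is required, as you then correctly use) and that $\gcd(c,d)=1$ is "inherited" from $\gcd(\alpha,\beta)=1$ rather than being the standing normalization needed for the convolution sums to be evaluable; neither affects the validity of the proof.
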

\begin{proof} 
Simlar to the proof of %Proposition 
\hyperref[representation-prop-1]{Proposition \ref*{representation-prop-1}}.
\end{proof}

\subsection{Number of Representations of a positive Integer}
\label{represent_c_d}
The number 
of representations of a positive integer $n$  by the octonary quadratic form  
$c\,(x_{1}^{2} + x_{1}x_{2} + x_{2}^{2} + x_{3}^{2} + x_{3}x_{4} + x_{4}^{2})
+ d\,(x_{5}^{2} + x_{5}x_{6}+ x_{6}^{2} + x_{7}^{2} + x_{7}x_{8}+ x_{8}^{2})$
is determined as an immediate application of %Theorem 
\autoref{convolution_a_b}. 

Let $n\in\mathbb{N}$ and  let $s_{4}(n)$ denote the number of representations
of $n$ by the quaternary quadratic form  $x_{1}^{2} + x_{1}x_{2} + x_{2}^{2} +
x_{3}^{2} + x_{3}x_{4} + x_{4}^{2}$, that is  
\begin{equation*}
s_{4}(n)=\text{card}(\{(x_{1},x_{2},x_{3},x_{4})\in\mathbb{Z}^{4}~|~ n = x_{1}^{2} + x_{1}x_{2} + x_{2}^{2} + x_{3}^{2} + x_{3}x_{4} + x_{4}^{2}\}).
\end{equation*}
We set $s_{4}(0) = 1$. \jgH\ et al. \cite{huardetal} and \gaL\ 
\cite{lomadze} have proved that for all
$n\in\mathbb{N}$ 
\begin{equation}
s_{4}(n) = 12\sigma(n) - 36\sigma(\frac{n}{3}). \label{representations-eqn-c_d-1}
\end{equation}
Now, let the number of representations of $n$ by the octonary quadratic form 
\begin{equation*}
c\,(x_{1}^{2} + x_{1}x_{2} + x_{2}^{2} + x_{3}^{2} + x_{3}x_{4} + x_{4}^{2})
+ d\,(x_{5}^{2} + x_{5}x_{6}+ x_{6}^{2} + x_{7}^{2} + x_{7}x_{8}+ 
x_{8}^{2})
\end{equation*}
by $R_{(c,d)}(n)$. That is, 
\begin{multline*}
%\begin{split}
R_{(c,d)}(n) =\text{card}
(\{(x_{1},x_{2},x_{3},x_{4},x_{5},x_{6},x_{7},x_{8})\in\mathbb{Z}^{8}~|~
n = c\,(x_{1}^{2} + x_{1}x_{2} + x_{2}^{2}  \\
    + x_{3}^{2} + x_{3}x_{4} + x_{4}^{2}) + 
d\,( x_{5}^{2} + x_{5}x_{6}+ x_{6}^{2} + x_{7}^{2} + x_{7}x_{8}+ x_{8}^{2}) \}).
%\end{split}
\end{multline*}
We infer the following result.
\begin{theorem} \label{representations-theor-c_d}
Let $n\in\mathbb{N}$ and $(c,d)\in\Omega_{3}$. Then  
\begin{multline*}
%\begin{split}
R_{(c,d)}(n) = 12\,\sigma(\frac{n}{c}) - 36\,\sigma(\frac{n}{3c}) 
+ 12\,\sigma(\frac{n}{d}) - 36\,\sigma(\frac{n}{3d}) + 144\,W_{(c,d)}(n) \\
   + 1296\,W_{(c,d)}(\frac{n}{3}) - 432\,\biggl( W_{(3c,d)}(n) + 
   W_{(c,3d)}(n) \biggr). 
\end{multline*}
\end{theorem}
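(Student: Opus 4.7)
The plan is to mirror the strategy used to evaluate $N_{(a,b)}(n)$ in the previous section, substituting the representation formula \autoref{representations-eqn-c_d-1} for $s_{4}(n)$ in place of the one for $r_{4}(n)$. First I would expand
\begin{equation*}
R_{(c,d)}(n) = \sum_{\substack{(l,m)\in\mathbb{N}_{0}^{2} \\ cl+dm=n}} s_{4}(l)\,s_{4}(m),
\end{equation*}
where $\mathbb{N}_{0}=\mathbb{N}\cup\{0\}$, and peel off the two boundary contributions coming from $l=0$ and $m=0$, namely $s_{4}(0)\,s_{4}(n/d)+s_{4}(n/c)\,s_{4}(0)$. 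Using $s_{4}(0)=1$ and \autoref{representations-eqn-c_d-1}, these yield the four $\sigma$-terms $12\sigma(n/c)-36\sigma(n/(3c))+12\sigma(n/d)-36\sigma(n/(3d))$ appearing in the claimed identity.

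Next I would substitute \autoref{representations-eqn-c_d-1} into the remaining interior sum. Expanding the product gives
\begin{equation*}
(12\sigma(l)-36\sigma(\tfrac{l}{3}))(12\sigma(m)-36\sigma(\tfrac{m}{3}))
=144\,\sigma(l)\sigma(m)-432\,\sigma(\tfrac{l}{3})\sigma(m)-432\,\sigma(l)\sigma(\tfrac{m}{3})+1296\,\sigma(\tfrac{l}{3})\sigma(\tfrac{m}{3}),
\end{equation*}
so the interior sum decomposes into four convolution-type sums weighted by $144$, $-432$, $-432$ and $1296$ respectively.

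The third step is to reindex each of the four sums so that it takes the form of a standard convolution sum $W_{(\cdot,\cdot)}$. Since $\sigma(l/3)$ vanishes unless $3\mid l$, the substitution $l\mapsto 3l$ gives
\begin{equation*}
\sum_{cl+dm=n}\sigma(\tfrac{l}{3})\sigma(m) = \sum_{3cl+dm=n}\sigma(l)\sigma(m) = W_{(3c,d)}(n),
\end{equation*}
and similarly $\sum\sigma(l)\sigma(m/3)=W_{(c,3d)}(n)$. The simultaneous substitution $l\mapsto 3l$, $m\mapsto 3m$ in the last sum turns the equation $cl+dm=n$ into $cl+dm=n/3$, which yields $W_{(c,d)}(n/3)$ (automatically zero unless $3\mid n$, as required). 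Collecting the four contributions with their coefficients and adding the boundary terms produces the stated formula.

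There is no genuine obstacle here; the argument is essentially a bookkeeping exercise modeled on \autoref{representations-theor_a_b}. The only point requiring minor care is the reindexing step, where one must verify that the convention $\sigma(r)=0$ for $r\notin\mathbb{N}$ makes the substitutions $l\mapsto 3l$, $m\mapsto 3m$ valid without introducing or omitting terms, and that the pair $(c,d)\in\Omega_{3}$ together with the form of $\alpha\beta$ in \autoref{representations_c_d-eqn-1} ensures that each of $W_{(c,d)}(n)$, $W_{(3c,d)}(n)$, $W_{(c,3d)}(n)$ and $W_{(c,d)}(n/3)$ is either already known or is covered by \autoref{convolution_a_b}. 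This is precisely the content of \autoref{representation-prop-2}.
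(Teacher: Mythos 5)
Your proposal is correct and follows essentially the same route as the paper's proof: peel off the boundary terms via $s_{4}(0)=1$, substitute $s_{4}(n)=12\sigma(n)-36\sigma(\frac{n}{3})$, expand the product into four weighted convolution-type sums, and reindex by $l\mapsto 3l$, $m\mapsto 3m$ to recognize $W_{(3c,d)}(n)$, $W_{(c,3d)}(n)$ and $W_{(c,d)}(\frac{n}{3})$. The only cosmetic difference is that you make the $\mathbb{N}_{0}$ indexing and the vanishing convention for $\sigma$ explicit, which the paper leaves implicit.
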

\begin{proof} It is obvious that 
\begin{multline*}
R_{(c,d)}(n) = \sum_{\substack{
{(l,m)\in\mathbb{N}^{2}} \\ {cl+dm=n}
}}s_{4}(l)s_{4}(m) 
  = s_{4}(\frac{n}{c})s_{4}(0) + s_{4}(0)s_{4}(\frac{n}{d})  %%\\
+ \sum_{\substack{
{(l,m)\in\mathbb{N}^{2}} \\ {cl+dm=n}
}}s_{4}(l)s_{4}(m). 
\end{multline*}
We make use of 
\autoref{representations-eqn-c_d-1} to deduce 
\begin{multline*}
%\begin{split}
R_{(c,d)}(n) = 12\sigma(\frac{n}{c}) - 36\sigma(\frac{n}{3c}) + 12\sigma(\frac{n}{d}) -
36\sigma(\frac{n}{3d}) \\
   + \sum_{\substack{
%   \overset{}{\underset{\begin{array}{c}
{(l,m)\in\mathbb{N}^{2}} \\ {cl+dm=n}
%\end{array}}}
% {\sum}
%   \kksum{l+10m=n}{(l,m)\in\mathbb{N}^{2}} 
 }} (12\sigma(l) -
  36\sigma(\frac{l}{3}))(12\sigma(m) - 36\sigma(\frac{m}{3})).
%\end{split}
\end{multline*}
We observe that 
\begin{multline*}
%\begin{split}
(12\sigma(l) - 36\sigma(\frac{l}{3}))(12\sigma(m) - 36\sigma(\frac{m}{3})) = 
144\sigma(l)\sigma(m) - 432\sigma(\frac{l}{3})\sigma(m) \\
   - 432\sigma(l)\sigma(\frac{m}{3})  + 1296\sigma(\frac{l}{3})\sigma(\frac{m}{3}).
%\end{split}
\end{multline*}
We assume that the evaluation of 
\begin{equation*}
 W_{(c,d)}(n) = \sum_{\substack{
{(l,m)\in\mathbb{N}^{2}} \\ {cl+dm=n}
 }}\sigma(l)\sigma(m), 
\end{equation*}
$W_{(3c,d)}(n)$ and $W_{(c,3d)}(n)$ are known.  
We apply the transformations $m$ to $3m$ and $l$ to $3l$ to infer 
\begin{equation*}
%\begin{split}  
%\kksum{l+10m=n}{(l,m)\in\mathbb{N}^{2}} 
\sum_{\substack{
%\overset{}{\underset{\begin{array}{c}
{(l,m)\in\mathbb{N}^{2}} \\ {cl+dm=n}
%\end{array}}}
 }}\sigma(l)\sigma(\frac{m}{3})  = 
 \sum_{\substack{
% \overset{}{\underset{\begin{array}{c}
{(l,m)\in\mathbb{N}^{2}} \\ {cl+3d\,m=n}
%\end{array}}}
 }}\sigma(l)\sigma(m)
%\kksum{l+30m=n}{(l,m)\in\mathbb{N}^{2}} \sigma(l)\sigma(m) 
 = W_{(c,3d)}(n)  
%\end{split}
\end{equation*}
%We assume that the evaluation of $W_{(c,3b)}(n)$ is known.
%We map $l$ to $3l$ to infer  
and 
\begin{equation*}
%\begin{split}
%\kksum{l+10m=n}{(l,m)\in\mathbb{N}^{2}} 
\sum_{\substack{
%\overset{}{\underset{\begin{array}{c}
{(l,m)\in\mathbb{N}^{2}} \\ {cl+dm=n}
%\end{array}}}
 }}\sigma(m)\sigma(\frac{l}{3})  = 
%\kksum{3l+10m=n}{(l,m)\in\mathbb{N}^{2}} \sigma(l)\sigma(m) 
\sum_{\substack{
%\overset{}{\underset{\begin{array}{c}
{(l,m)\in\mathbb{N}^{2}} \\ {3c\,l+dm=n}
%\end{array}}}
 }}\sigma(l)\sigma(m)
= W_{(3c,d)}(n),
%\end{split}
\end{equation*}
%We assume that the evaluation of $W_{(3c,d)}(n)$ is known.
respectively. 
We simultaneously map $l$ to $3l$ and $m$ to $3m$ deduce
\begin{equation*}
%\begin{split}
%\kksum{l+10m=n}{(l,m)\in\mathbb{N}^{2}} 
\sum_{\substack{
%\overset{}{\underset{\begin{array}{c}
{(l,m)\in\mathbb{N}^{2}} \\ {cl+dm=n}
%\end{array}}}
 }}\sigma(\frac{m}{3})\sigma(\frac{l}{3})  = 
%\kksum{l+10m=\frac{n}{3}}{(l,m)\in\mathbb{N}^{2}} \sigma(l)\sigma(m) 
\sum_{\substack{
%\overset{}{\underset{\begin{array}{c}
{(l,m)\in\mathbb{N}^{2}} \\ {cl+dm=\frac{n}{3}}
%\end{array}}}
 }}\sigma(l)\sigma(m)
  = W_{(c,d)}(\frac{n}{3}).
%\end{split}
\end{equation*}
%We assume that the evaluation of $W_{(c,d)}(n)$ is known. 

We put these evaluations together to obtain the stated result 
for $R_{(c,d)}(n)$.
\end{proof}

%%%%%%%%%%%  Evaluation of Convolution Sums W_(5,9), W_(1,45), W_(2,25), W_(1,50)  %%%%%%%%%%%%

\section{Evaluation of the convolution sums $W_{(\alpha,\beta)}(n)$ when $\alpha\beta=45,48, 50, 64$}
\label{convolution_45_50}

In this Section, we give explicit formulae for the convolution sums 
$W_{(1,45)}(n)$, $W_{(5,9)}(n)$, $W_{(1,48)}(n)$, $W_{(3,16)}(n)$, $W_{(1,50)}(n)$,  
$W_{(2,25)}(n)$ and $W_{(1,64}(n)$. 

The two convolution sums $W_{(1,50)}(n)$ and $W_{(2,25)}(n)$  are worth mentioning 
due to the fact that the set of divisors of $50$ which are associated with the 
Dirichlet character for the formation of a basis of the space of Eisensten forms is 
the whole set of divisors of $50$.

\subsection{Bases for $\E_{4}(\Gamma_{0}(\alpha\beta))$ and $\S_{4}(\Gamma_{0}(\alpha\beta))$ when $\alpha\beta=45,48,50, 64$}  
\label{convolution_45_50-gen}

%We apply the dimension formula given in \waS\ \cite[p.~91]{wstein} 
%to infer   
The dimension formulae for the space of 
%Eisenstein forms and that of 
cusp forms as given in T.~Miyake's book  
\cite[Thrm 2.5.2,~p.~60]{miyake1989} or W.~A.~Stein's book  
\cite[Prop.\ 6.1, p.\ 91]{wstein} and \autoref{dimension-Eisenstein} are applied to compute 
\begin{align*}   
\text{dim}(\E_{4}(\Gamma_{0}(45)))=8, \quad 
\text{dim}(\S_{4}(\Gamma_{0}(45)))=14,  \\
\text{dim}(\E_{4}(\Gamma_{0}(48)))=\text{dim}(\E_{4}(\Gamma_{0}(50)))=\text{dim}(\E_{4}(\Gamma_{0}(64)))=12  \\ 
\text{dim}(\S_{4}(\Gamma_{0}(50)))=17, \quad 
\text{dim}(\S_{4}(\Gamma_{0}(48))=\text{dim}(\S_{4}(\Gamma_{0}(64))=18. 
\end{align*}

We use \autoref{ligozat_theorem} to determine $\eta$-quotients which are 
elements of $\S_{4}(\Gamma_{0}(45))$, $\S_{4}(\Gamma_{0}(48))$,  
$\S_{4}(\Gamma_{0}(50))$ and $\S_{4}(\Gamma_{0}(64))$, respectively.
 
Let $D(45)$, $D(48)$, $D(50)$ and $D(64)$ denote the sets 
of positive divisors of $45$, $48$, $50$ and $64$, respectively.

We observe that 
\begin{align}
\M_{4}(\Gamma_{0}(5)) \subset \M_{4}(\Gamma_{0}(15)) \subset \M_{4}(\Gamma_{0}(45)) 
 \label{bas-s-eqn-5_45} \\
\M_{4}(\Gamma_{0}(9)) \subset \M_{4}(\Gamma_{0}(45)) \label{bas-s-eqn-9_45} \\
\M_{4}(\Gamma_{0}(6)) \subset \M_{4}(\Gamma_{0}(12)) \subset \M_{4}(\Gamma_{0}(24)) 
\subset \M_{4}(\Gamma_{0}(48)) \label{bas-s-eqn-6_48} \\
\M_{4}(\Gamma_{0}(8)) \subset \M_{4}(\Gamma_{0}(24)) \subset \M_{4}(\Gamma_{0}(48)) \label{bas-s-eqn-8_48-1} \\
\M_{4}(\Gamma_{0}(8)) \subset \M_{4}(\Gamma_{0}(16)) \subset \M_{4}(\Gamma_{0}(48)) \label{bas-s-eqn-8_48-2} \\
\M_{4}(\Gamma_{0}(5)) \subset \M_{4}(\Gamma_{0}(25)) \subset \M_{4}(\Gamma_{0}(50)) \label{bas-s-eqn-5_50-1}\\
\M_{4}(\Gamma_{0}(5)) \subset \M_{4}(\Gamma_{0}(10)) \subset \M_{4}(\Gamma_{0}(50)) \label{bas-s-eqn-5_50-2}  \\
\M_{4}(\Gamma_{0}(8)) \subset \M_{4}(\Gamma_{0}(16)) \subset \M_{4}(\Gamma_{0}(32)) 
\subset \M_{4}(\Gamma_{0}(64)). \label{bas-s-eqn-8_64}
\end{align}
A graphical illustration of the inclusion relation represented by 
\autoref{bas-s-eqn-5_45}, \autoref{bas-s-eqn-9_45}, \autoref{bas-s-eqn-5_50-1} 
and \autoref{bas-s-eqn-5_50-2} are  given 
in \hyperref[bas-s-45-50]{Figure \ref*{bas-s-45-50}}; and that represented by 
\autoref{bas-s-eqn-6_48}, \autoref{bas-s-eqn-8_48-1}, and 
\autoref{bas-s-eqn-8_48-2} in \hyperref[bas-s-48]{Figure \ref*{bas-s-48}}.

\begin{corollary} \label{basisCusp5_9}
\begin{enumerate}
\item[\textbf{(a)}] The sets 
\begin{align*}
\EuScript{B}_{E,45}=\{M(q^{t})\mid t|45\}\cup\{\,M_{\legendre{-4}{n}}(q^{s})\mid 
s=1,3\}   \\ 
\EuScript{B}_{E,48}=\{\,M(q^{t})\,\mid ~t\in D(48)\,\}
\cup\{\,M_{\legendre{-3}{n}}(q^{s})\mid s=1,2\}  \\
\EuScript{B}_{E,50}=\{M(q^{t})\mid t|50\}\cup\{\,M_{\legendre{-3}{n}}(q^{s})\mid 
s\in D(50)\,\}   \quad\text{and}\quad \\ 
\EuScript{B}_{E,64}=\{\, M(q^{t})\,\mid ~ t\in D(64)\,\} 
\cup\{\,M_{\legendre{-3}{n}}(q^{s})\mid s=1,2,4,8,16\}
\end{align*} 
constitute bases of $\E_{4}(\Gamma_{0}(45))$, $\E_{4}(\Gamma_{0}(48))$, 
$\E_{4}(\Gamma_{0}(50))$ and $\E_{4}(\Gamma_{0}(64))$, respectively.
\item[\textbf{(b)}] Let $1\leq i\leq 14$, $1\leq j\leq 17$, $1\leq k,l\leq 18$ 
be positive integers.

Let $\delta_{1}\in D(45)$ and 
$(r(i,\delta_{1}))_{i,\delta_{1}}$ be the  
\autoref{convolutionSums-5_9-table} of the powers of $\eta(\delta_{1}\,z)$. 

Let $\delta_{3}\in D(48)$ and 
$(r(k,\delta_{3}))_{k,\delta_{3}}$ be the 
\autoref{convolutionSums-3_16-table} of the powers of $\eta(\delta_{3} z)$. 

Let $\delta_{2}\in D(50)$ and 
$(r(j,\delta_{2}))_{j,\delta_{2}}$ be the  
\autoref{convolutionSums-2_25-table} of the powers of $\eta(\delta_{2}\,z)$. 

Let $\delta_{4}\in D(64)$ and 
$(r(l,\delta_{4}))_{l,\delta_{4}}$ be the  
\autoref{convolutionSums-1_64-table} of the powers of $\eta(\delta_{4} z)$. 

Let furthermore  
\begin{align*}
\EuFrak{B}_{45,i}(q)=\underset{\delta_{1}|45}{\prod}\eta^{r(i,\delta_{1})}(\delta_{1}
\,z),   \qquad 
\EuFrak{B}_{48,k}(q)=\underset{\delta_{3}|48}{\prod}\eta^{r(k,\delta_{3})}(\delta_{3}
\,z),  \\
\EuFrak{B}_{50,j}(q)=\underset{\delta_{2}|50}{\prod}\eta^{r(j,\delta_{2})}(\delta_{2}
\,z),  \qquad 
\EuFrak{B}_{64,l}(q)=\underset{\delta_{4}|64}{\prod}\eta^{r(l,\delta_{4})}(\delta_{4}
\,z)  
\end{align*} 
be selected elements of 
$\S_{4}(\Gamma_{0}(45))$, $\S_{4}(\Gamma_{0}(48))$, $\S_{4}(\Gamma_{0}(50))$ and 
$\S_{4}(\Gamma_{0}(64))$, respectively. 

The sets 
\begin{align*}
\EuScript{B}_{S,45}=\{ \EuFrak{B}_{45,i}(q)\mid ~1\leq i\leq 14\}, \qquad   
\EuScript{B}_{S,48}=\{ \EuFrak{B}_{48,k}(q)\mid ~1\leq k\leq 18\},  \\
\EuScript{B}_{S,50}=\{ \EuFrak{B}_{50,j}(q)\mid ~1\leq j\leq 17\}, \qquad
\EuScript{B}_{S,64}=\{ \EuFrak{B}_{64,l}(q)\mid ~1\leq l\leq 18\}
\end{align*} 
are bases of $\S_{4}(\Gamma_{0}(45))$, $\S_{4}(\Gamma_{0}(48))$,  
$\S_{4}(\Gamma_{0}(50))$ and $\S_{4}(\Gamma_{0}(64))$ , respectively.
\item[\textbf{(c)}] The sets 
\begin{align*}
\EuScript{B}_{M,45}=\EuScript{B}_{E,45}\cup\EuScript{B}_{S,45}, \quad 
\EuScript{B}_{M,48}=\EuScript{B}_{E,48}\cup\EuScript{B}_{S,48},  \\ 
\EuScript{B}_{M,50}=\EuScript{B}_{E,50}\cup\EuScript{B}_{S,50},  \quad 
\EuScript{B}_{M,64}=\EuScript{B}_{E,64}\cup\EuScript{B}_{S,64}
\end{align*} 
constitute bases of $\M_{4}(\Gamma_{0}(45))$, $\M_{4}(\Gamma_{0}(48))$, 
$\M_{4}(\Gamma_{0}(50))$ and $\M_{4}(\Gamma_{0}(64))$,  respectively. 
\end{enumerate}
\end{corollary}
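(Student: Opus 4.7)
The plan is to apply \autoref{basisCusp_a_b} specialized to the four levels $\alpha\beta = 45, 48, 50, 64$. Since that theorem establishes the analogous statement for any $\alpha\beta$, the corollary reduces to: (1) a dimension count confirming that the cardinalities of the proposed bases match $m_{E}$ and $m_{S}$ for each level; (2) for each level, one linear-independence check for the Eisenstein basis $\EuScript{B}_{E}$ and one for the cuspidal basis $\EuScript{B}_{S}$; and (3) an invocation of the direct sum decomposition.

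For part \textbf{(a)}, I would first verify that each twisted Eisenstein series $M_{\chi}(q^{s})$ with the prescribed Kronecker symbol $\chi$ and $s\in D(\chi)$ lies in $\M_{4}(\Gamma_{0}(\alpha\beta))$. This follows from Stein's Theorems 5.8 and 5.9 together with the level inclusions recorded in \autoref{bas-s-eqn-5_45}--\autoref{bas-s-eqn-8_64}. The cardinality counts are straightforward: $|\EuScript{B}_{E,45}| = |D(45)| + 2 = 6+2 = 8$; $|\EuScript{B}_{E,48}| = |D(48)| + 2 = 10+2 = 12$; $|\EuScript{B}_{E,50}| = |D(50)| + |D(50)| = 6+6 = 12$; and $|\EuScript{B}_{E,64}| = |D(64)| + 5 = 7+5 = 12$, each matching the computed dimension. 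Linear independence then proceeds as in the proof of \autoref{basisCusp_a_b}\,(a): equate the $q^{n}$-coefficients for $n$ ranging over $D(\alpha\beta) \cup \bigcup_{\chi} D(\chi)$, and confirm that the resulting square matrix built from the twisted divisor sums $\chi \sigma_{3}(n/s)$ has nonzero determinant.

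For part \textbf{(b)}, the plan is to use the tables \autoref{convolutionSums-5_9-table}, \autoref{convolutionSums-3_16-table}, \autoref{convolutionSums-2_25-table}, and \autoref{convolutionSums-1_64-table} of powers $(r(i,\delta))_{i,\delta}$ defining the $\eta$-quotients $\EuFrak{B}_{\alpha\beta,i}(q)$. Two checks are required: first, that each such $\EuFrak{B}_{\alpha\beta,i}(q)$ belongs to $\S_{4}(\Gamma_{0}(\alpha\beta))$, which amounts to verifying Ligozat's five conditions (i)--(v$'$) from \autoref{ligozat_theorem} row by row in each table; second, that the proposed sets are linearly independent, which follows by ordering the $\eta$-quotients according to their smallest $q$-degree (computable via \autoref{basis-remark}\,(r2)) and applying the triangular-matrix argument from the proof of \autoref{basisCusp_a_b}\,(b). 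When a clean triangular arrangement fails at some index, one falls back on computing determinants of coefficient matrices up to row $m_{S}$ and, where a determinant vanishes, substituting alternative $\eta$-quotients from the exhaustive Ligozat search, exactly as prescribed there. Part \textbf{(c)} is then immediate from $\M_{4}(\Gamma_{0}(\alpha\beta)) = \E_{4}(\Gamma_{0}(\alpha\beta)) \oplus \S_{4}(\Gamma_{0}(\alpha\beta))$ together with (a) and (b), mirroring \autoref{basisCusp_a_b}\,(c).

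The main obstacle is part \textbf{(b)}: Ligozat-verification for the $14 + 18 + 17 + 18 = 67$ tabulated $\eta$-quotients is routine but tedious, and the linear-independence check is most delicate for level $50$, where the twisting character is supported on the full divisor set $D(50)$ and the Eisenstein contribution therefore competes with the cuspidal contribution across every coefficient that one would naturally use for the determinant. Ruling out spurious linear relations between the twisted Eisenstein basis elements and the selected cusp forms is where the symbolic-computation toolchain named in the introduction (\emph{GiNaC}, \emph{Maxima}, \emph{Reduce}, \emph{SAGE}, and \emph{MAPLE}) becomes essential, both for expanding the $\eta$-quotients to sufficient precision and for evaluating the resulting $m_{S}\times m_{S}$ determinants reliably.
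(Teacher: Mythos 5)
Your proposal follows essentially the same route as the paper: the corollary is obtained by specializing the argument of \autoref{basisCusp_a_b} to the four levels, checking that the cardinalities of the proposed sets match the dimensions computed in \hyperref[convolution_45_50-gen]{Subsection \ref*{convolution_45_50-gen}}, establishing linear independence by equating $q^{n}$-coefficients and solving the resulting homogeneous linear systems (with the Ligozat conditions certifying membership of the $\eta$-quotients in the cusp spaces), and invoking $\M_{4}(\Gamma_{0}(\alpha\beta))=\E_{4}(\Gamma_{0}(\alpha\beta))\oplus\S_{4}(\Gamma_{0}(\alpha\beta))$ for part (c). The paper in fact only writes out the level-$45$ case and declares the other three similar, exactly as you do.

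The one step that would fail as literally written is your choice of index set for the Eisenstein independence check in part (a). You propose to equate coefficients for $n\in D(\alpha\beta)\cup\bigcup_{\chi}D(\chi)$; but for each of the four levels the sets $D(\chi)$ are already contained in $D(\alpha\beta)$, so this union has only $|D(\alpha\beta)|$ elements ($6$, $10$, $6$, $7$ for levels $45$, $48$, $50$, $64$), while the numbers of unknowns are $8$, $12$, $12$, $12$. The resulting homogeneous system is underdetermined and cannot by itself force the trivial solution; in particular your ``resulting square matrix'' is not square. The paper repairs this by adjoining extra exponents $n$ at which the Kronecker symbol separates the twisted series from the untwisted ones: for level $45$ it equates coefficients at $n\in D(45)$ together with $n=2,7$, giving $8$ equations in $8$ unknowns. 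The same adjustment is needed, and suffices, for the other three levels; with it in place, the remainder of your argument, including the smallest-degree/triangular treatment of the cusp bases in part (b) and its determinant fallback, matches the paper's proof.
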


By \hyperref[basis-remark]{Remark \ref*{basis-remark}} (r1), each 
$\EuFrak{B}_{\alpha\beta,i}(q)$ is expressible in the form 
$\underset{n=1}{\overset{\infty}{\sum}}\EuFrak{b}_{\alpha\beta,i}(n)q^{n}$.

\begin{proof} We only give the proof for $\EuScript{B}_{M,45}=\EuScript{B}_{E,45}\cup
\EuScript{B}_{S,45}$ since the other cases are done similarly. In the case of 
$\EuScript{B}_{E,48}$, $\EuScript{B}_{E,50}$, $\EuScript{B}_{E,64}$ 
an applicable primitive Dirichlet character is 
\begin{equation} \label{base-3_16-kronecker}
\legendre{-3}{n}= \begin{cases}
 -1 & \text{ if } n\equiv 2\pmod{3}, \\
 0 & \text{ if } \text{gcd}(3,n) \neq 1, \\
 1 & \text{ if } n\equiv 1\pmod{3}. 
	\end{cases}
\end{equation}
 
\begin{enumerate}
\item[\textbf{(a)}]
Suppose that $x_{\delta},z_{1},z_{3}\in\mathbb{C}$ with $\delta|45$. Let   
$$\underset{\delta|45}{\sum} x_{\delta}\,M(q^{\delta}) + 
z_{1}\,M_{\legendre{-4}{n}}(q) + z_{3}\,M_{\legendre{-4}{n}}(q^{3})=0.$$ 
We observe that 
\begin{equation} \label{base-5_9-kronecker}
\legendre{-4}{n}= \begin{cases}
 -1 & \text{ if } n\equiv 3\pmod{4}, \\
 0 & \text{ if } \text{gcd}(4,n) \neq 1, \\
 1 & \text{ if } n\equiv 1\pmod{4}. 
	\end{cases}
\end{equation}
and recall that for all $0\neq a\in\mathbb{Z}$ it holds that $\legendre{a}{0}=0$. 
Since the conductor of the Dirichlet character $\legendre{-4}{n}$ is $4$, we infer 
from \autoref{Eisenstein-gen} that $C_{0}=0$. 
We then deduce  
\begin{equation*}
\underset{\delta|45}{\sum} x_{\delta} 
% + C_{0}\,z_{1} + C_{0}\,z_{3}   \\
 + \underset{i=1}{\overset{\infty}
{\sum}}\biggl(240\underset{\delta|45}{\sum}\sigma_{3}(\frac{n}{\delta}) x_{\delta} + 
\legendre{-4}{n}\sigma_{3}(n)z_{1} + \legendre{-4}{n}\sigma_{3}(\frac{n}
{3})z_{3}\biggr)q^{n}=0.  
\end{equation*}
Then we equate the coefficients of $q^{n}$ for $n\in D(45)$ plus for example  
$n=2,7$ 
to obtain a system of $8$ linear equations 
whose unique solution is $x_{\delta}=z_{1}=z_{3}=0$ with $\delta\in D(45)$. So, the set 
$\EuScript{B}_{E}$ is linearly independent. 
Hence, the set $\EuScript{B}_{E}$ is a basis of 
$\E_{4}(\Gamma_{0}(45))$.
\item[\textbf{(b)}] 
Suppose that $x_{i}\in\mathbb{C}$ with $1\leq i\leq 14$. Let  
$\underset{i=1}{\overset{14}{\sum}}x_{i}\,\EuFrak{B}_{45,i}(q)=0$. Then   
\begin{equation*}
\underset{i=1}{\overset{14}{\sum}}x_{i}\underset{n=1}{\overset{\infty}{\sum}}\,\EuFrak{b}_{45,i}(n)q^{n}
= \underset{n=1}{\overset{\infty}{\sum}}\biggl(\,\underset{i=1}{\overset{14}{\sum}}\,\EuFrak{b}_{45,i}(n)\,x_{i}\,\biggr)q^{n} = 0.
\end{equation*}
So, we equate the coefficients of $q^{n}$ for $1\leq n\leq 14$ to obtain 
%the following 
a system of $14$ linear equations 
whose unique solution is $x_{i}=0$ for all  $1\leq i\leq 14$. 
It follows that the set $\EuScript{B}_{S}$ is linearly independent. 
Hence, the set $\EuScript{B}_{S}$ is a basis of 
$S_{4}(\Gamma_{0}(45))$.
\item[\textbf{(c)}]
Since $\M_{4}(\Gamma_{0}(45))=\E_{4}(\Gamma_{0}(45))\oplus 
\S_{4}(\Gamma_{0}(45))$, the result follows from (a) and (b).
\end{enumerate}
\end{proof}

\subsection{Evaluation of $W_{(\alpha,\beta)}(n)$ for  $\alpha\beta=45,48,50,64$} 
%$W_{(1,50)}(n)$, $W_{(2,25)}(n)$} 
\label{convolSum-w_45_50}  

We evaluate the convolution sums $W_{(\alpha,\beta)}(n)$ for  
$(\alpha,\beta)=(1,45)$, $(5,9)$, $(1,48)$,$(3,16)$,$(1,50)$, $(1,64)$. 

\begin{corollary} \label{lema_45_50}
It holds that 
\begin{multline}
(5\, L(q^{5}) - 9\, L(q^{9}))^{2} 
 = 16 + \sum_{n=1}^{\infty}\biggl(\, - \frac{120}{13}\sigma_{3}(n) 
	- \frac{51960}{923}\sigma_{3}(\frac{n}{3}) \\
   + \frac{75000}{13}\sigma_{3}(\frac{n}{5})   
    + \frac{1296000}{71}\sigma_{3}(\frac{n}{9}) 
   - \frac{5089800}{923}\sigma_{3}(\frac{n}{15})  
    + \frac{5184000}{71}\sigma_{3}(\frac{n}{45}) \\
   - \frac{19344}{1349}\,\EuFrak{b}_{45,1}(n)    %a_{45,1}(n) 
   + \frac{239256}{1349}\,\EuFrak{b}_{45,2}(n) 
   + \frac{10760952}{17537}\,\EuFrak{b}_{45,3}(n) 
   + \frac{762672}{1349}\,\EuFrak{b}_{45,4}(n)  \\
   - \frac{2459904}{1349}\,\EuFrak{b}_{45,5}(n) 
   + \frac{1247280}{1349}\,\EuFrak{b}_{45,6}(n) 
   + \frac{5755968}{1349}\,\EuFrak{b}_{45,7}(n) 
   + \frac{370080}{71}\,\EuFrak{b}_{45,8}(n)  \\
   + \frac{2503632}{1349}\,\EuFrak{b}_{45,9}(n) 
   + \frac{302400}{71}\,\EuFrak{b}_{45,10}(n) 
   - \frac{14389920}{1349}\,\EuFrak{b}_{45,11}(n) \\
   + \frac{413352}{17537}\,\EuFrak{b}_{45,12}(n) 
   + \frac{11760}{1349}\,\EuFrak{b}_{45,13}(n) 
%   + 0\,\EuFrak{b}_{45,14}(n)
\,\biggr)q^{n}.
\label{convolSum-eqn-5_9}
\end{multline}
\begin{multline}
(L(q) - 45\, L(q^{45}))^{2} 
 = 1936 + \sum_{n=1}^{\infty}\biggl(\, - \frac{120}{13}\sigma_{3}(n) 
	- \frac{51960}{923}\sigma_{3}(\frac{n}{3}) 
   + \frac{75000}{13}\sigma_{3}(\frac{n}{5})  \\ 
    + \frac{1296000}{71}\sigma_{3}(\frac{n}{9}) 
   - \frac{5089800}{923}\sigma_{3}(\frac{n}{15})  
    + \frac{5184000}{71}\sigma_{3}(\frac{n}{45}) 
   - \frac{19344}{1349} \,\EuFrak{b}_{45,1}(n) \\
   + \frac{239256}{1349} \,\EuFrak{b}_{45,2}(n) 
   + \frac{10760952}{17537}\,\EuFrak{b}_{45,3}(n) 
   + \frac{762672}{1349} \,\EuFrak{b}_{45,4}(n)
   - \frac{2459904}{1349} \,\EuFrak{b}_{45,5}(n) \\
   + \frac{1247280}{1349} \,\EuFrak{b}_{45,6}(n) 
   + \frac{5755968}{1349}\,\EuFrak{b}_{45,7}(n) 
   + \frac{370080}{71} \,\EuFrak{b}_{45,8}(n)  
   + \frac{2503632}{1349} \,\EuFrak{b}_{45,9}(n) \\
   + \frac{302400}{71} \,\EuFrak{b}_{45,10}(n) 
   - \frac{14389920}{1349}\,\EuFrak{b}_{45,11}(n) 
   + \frac{413352}{17537} \,\EuFrak{b}_{45,12}(n)
   + \frac{11760}{1349}\,\EuFrak{b}_{45,13}(n) 
%   + 0\,\,\EuFrak{b}_{45,14}(n)
\,\biggr)q^{n}.
\label{convolSum-eqn-1_45}
\end{multline}
\begin{multline}
( L(q) - 48\, L(q^{48}))^{2}  
 = 2209 + \sum_{n=1}^{\infty}\biggl(\, 
 \frac{1164}{5}\,\sigma_{3}(n) 
 -\frac{20412}{65}\,\sigma_{3}(\frac{n}{2})
 -\frac{324}{5}\,\sigma_{3}(\frac{n}{3})  \\
 -\frac{290736}{65}\,\sigma_{3}(\frac{n}{4})
+  \frac{6372}{65}\,\sigma_{3}(\frac{n}{6})
+ \frac{281664}{65}\,\sigma_{3}(\frac{n}{8})
+ \frac{234576}{65}\,\sigma_{3}(\frac{n}{12})
 -\frac{9216}{5}\,\sigma_{3}(\frac{n}{16}) \\
 -\frac{506304}{65}\,\sigma_{3}(\frac{n}{24})
+\frac{2681856}{5}\,\sigma_{3}(\frac{n}{48})
%    0 
%    0 
+  \frac{38772}{65}\,\EuFrak{b}_{48,1}(n)
+ \frac{639792}{65}\,\EuFrak{b}_{48,2}(n)  \\
+ \frac{546804}{65}\,\EuFrak{b}_{48,3}(n)  
+ \frac{661824}{65}\,\EuFrak{b}_{48,4}(n)
+ \frac{195264}{65}\,\EuFrak{b}_{48,5}(n)
+ \frac{3729456}{65}\,\EuFrak{b}_{48,6}(n)  \\
+ \frac{67968}{5}\,\EuFrak{b}_{48,7}(n)
+ \frac{5422464}{65}\,\EuFrak{b}_{48,8}(n)  
+ \frac{4414464}{65}\,\EuFrak{b}_{48,9}(n)
+ \frac{3151872}{65}\,\EuFrak{b}_{48,10}(n)  \\
 +\frac{1426176}{65}\,\EuFrak{b}_{48,11}(n)
+\frac{14145408}{65}\,\EuFrak{b}_{48,12}(n)  
-\frac{3907584}{65}\,\EuFrak{b}_{48,13}(n)  
-\frac{1693440}{13}\,\EuFrak{b}_{48,14}(n)  \\
+ \frac{313344}{65}\,\EuFrak{b}_{48,15}(n)
+ \frac{7299072}{13}\,\EuFrak{b}_{48,16}(n) 
+ \frac{1032192}{13}\,\EuFrak{b}_{48,17}(n)  
+  \frac{92736}{65}\,\EuFrak{b}_{48,18}(n) 
  \,\biggr)q^{n}, \label{convolSum-eqn-1_48}  
\end{multline}
\begin{multline}
( 3\,L(q^{3}) - 16\, L(q^{16}))^{2}  
 = 169 + \sum_{n=1}^{\infty}\biggl(\, 
 -\frac{36}{5}\,\sigma_{3}(n)   
+  \frac{43092}{1885}\,\sigma_{3}(\frac{n}{2}) \\
+ \frac{10476}{5}\,\sigma_{3}(\frac{n}{3})  
+ \frac{1094256}{1885}\,\sigma_{3}(\frac{n}{4})  
 -\frac{450252}{1885}\,\sigma_{3}(\frac{n}{6})
 -\frac{1992384}{1885}\,\sigma_{3}(\frac{n}{8}) \\
 -\frac{2722896}{1885}\,\sigma_{3}(\frac{n}{12}) 
+ \frac{297984}{5}\,\sigma_{3}(\frac{n}{16})
 -\frac{4522176}{1885}\,\sigma_{3}(\frac{n}{24})
 -\frac{82944}{5}\,\sigma_{3}(\frac{n}{48}) \\
%    0  
%    0
  -\frac{34668}{65}\,\EuFrak{b}_{48,1}(n)  
+ \frac{3135888}{1885}\,\EuFrak{b}_{48,2}(n)
 -\frac{140076}{65}\,\EuFrak{b}_{48,3}(n)
 -\frac{4567104}{1885}\,\EuFrak{b}_{48,4}(n)  \\
 -\frac{115776}{65}\,\EuFrak{b}_{48,5}(n)   
+  \frac{20304}{1885}\,\EuFrak{b}_{48,6}(n)  
+ \frac{91008}{5}\,\EuFrak{b}_{48,7}(n)  
-\frac{12196224}{1885}\,\EuFrak{b}_{48,8}(n)  \\
+  \frac{589824}{65}\,\EuFrak{b}_{48,9}(n)   
+ \frac{10119168}{1885}\,\EuFrak{b}_{48,10}(n) 
 -\frac{140544}{65}\,\EuFrak{b}_{48,11}(n)  
-\frac{118171008}{1885}\,\EuFrak{b}_{48,12}(n)  \\
 -\frac{3032064}{65}\,\EuFrak{b}_{48,13}(n)  
 -\frac{7167744}{377}\,\EuFrak{b}_{48,14}(n) 
 -\frac{562176}{65}\,\EuFrak{b}_{48,15}(n)  \\
-\frac{32182272}{377}\,\EuFrak{b}_{48,16}(n) 
+ \frac{2313216}{13}\,\EuFrak{b}_{48,17}(n)  
+\frac{35136}{65}\,\EuFrak{b}_{48,18}(n)  
 \,\biggr)q^{n},
\label{convolSum-eqn-3_16} 
\end{multline}
\begin{multline}
(2\,L(q^{2}) - 25\,L(q^{25}))^{2}  = 529 + \sum_{n=1}^{\infty}\biggl(\, 
 \frac{810}{13}\,\sigma_{3}(n)   
+  \frac{11460}{13}\,\sigma_{3}(\frac{n}{2}) \\
- \frac{3210}{13}\,\sigma_{3}(\frac{n}{5})  
- 660\,\sigma_{3}(\frac{n}{10})  
+ \frac{1890000}{13}\,\sigma_{3}(\frac{n}{25})
 -\frac{240000}{13}\,\sigma_{3}(\frac{n}{50}) \\
%    0  
%    0
%    0  
%    0
%    0  
%    0
  -\frac{810}{13}\,\EuFrak{b}_{50,1}(n)  
+ \frac{6714}{13}\,\EuFrak{b}_{50,2}(n)
 - 1620\,\EuFrak{b}_{50,3}(n)
 - 4230\,\EuFrak{b}_{50,4}(n)
 - \frac{178950}{13}\,\EuFrak{b}_{50,5}(n)   \\
 -  20250\,\EuFrak{b}_{50,6}(n)  
+ 810\,\EuFrak{b}_{50,7}(n)  
- 13050\,\EuFrak{b}_{50,8}(n)  
+  12420\,\EuFrak{b}_{50,9}(n)   \\
- \frac{68400}{13}\,\EuFrak{b}_{50,10}(n) 
 - 4500\,\EuFrak{b}_{50,11}(n)  
- 36000\,\EuFrak{b}_{50,12}(n)
 - 21150\,\EuFrak{b}_{50,13}(n)  \\
 + 1800\,\EuFrak{b}_{50,14}(n) 
 - 15000\,\EuFrak{b}_{50,15}(n)  
+ 20700\,\EuFrak{b}_{50,16}(n)   %\\
+ 28800\,\EuFrak{b}_{50,17}(n)  
\,\biggr)\,q^{n}.
\label{convolSum-eqn-2_25}
\end{multline}
\begin{multline}
(L(q) - 50\, L(q^{50}))^{2} 
 = 2401 + \sum_{n=1}^{\infty}\biggl(\, 
 \frac{7590}{13}\,\sigma_{3}(n) 
 + \frac{13500}{13}\,\sigma_{3}(\frac{n}{2}) \\
 -\frac{6870}{13}\,\sigma_{3}(\frac{n}{5})  
 -\frac{23100}{13}\,\sigma_{3}(\frac{n}{10})
-  \frac{60000}{13}\,\sigma_{3}(\frac{n}{25})
+ \frac{7560000}{13}\,\sigma_{3}(\frac{n}{50})
%    0 
%    0 
%    0 
%    0 
%    0 
%    0 
+  \frac{38772}{13}\,\EuFrak{b}_{50,1}(n)  \\
+ \frac{639792}{13}\,\EuFrak{b}_{50,2}(n)  
- 7020\,\EuFrak{b}_{50,3}(n)  
- 20250\,\EuFrak{b}_{50,4}(n)
- \frac{721050}{13}\,\EuFrak{b}_{50,5}(n)  \\
- 116550\,\EuFrak{b}_{50,6}(n)  
- 2250\,\EuFrak{b}_{50,7}(n)
- 123750\,\EuFrak{b}_{50,8}(n)  
+ 99900\,\EuFrak{b}_{50,9}(n)
- \frac{910800}{13}\,\EuFrak{b}_{50,10}(n)  \\
- 38700\,\EuFrak{b}_{50,11}(n)
- 309600\,\EuFrak{b}_{50,12}(n)  
+ 13950\,\EuFrak{b}_{50,13}(n)    
- 88200\,\EuFrak{b}_{50,14}(n)  \\
- 129000\,\EuFrak{b}_{50,15}(n)
- 6300\,\EuFrak{b}_{50,16}(n) 
+ 28800\,\EuFrak{b}_{50,17}(n) \,\biggr)\,q^{n}.
\label{convolSum-eqn-1_50}
\end{multline}
\begin{multline}
( L(q) - 64\, L(q^{64}))^{2}   
 = 3969 + \sum_{n=1}^{\infty}\biggl(\, 
    234\,\sigma_{3}(n)
 - 18\,\sigma_{3}(\frac{n}{2})  \\     
 +  \frac{69624}{13}\,\sigma_{3}(\frac{n}{4})   
   - \frac{74304}{13}\,\sigma_{3}(\frac{n}{8}) 
- 1152\,\sigma_{3}(\frac{n}{16})   
 - 4608\,\sigma_{3}(\frac{n}{32})   
 + 958464 \,\sigma_{3}(\frac{n}{64})  \\
%   0        
%   0        
%   0        
%   0        
%   0       
 + 2790\,\EuFrak{b}_{64,1}(n)  
 + 7560\,\EuFrak{b}_{64,2}(n)
 + 20160 \,\EuFrak{b}_{64,3}(n)
 + \frac{112896}{13}\,\EuFrak{b}_{64,4}(n)  
 + 96768 \,\EuFrak{b}_{64,5}(n)     \\
 + 48384 \,\EuFrak{b}_{64,6}(n)   
 + 96768 \,\EuFrak{b}_{64,7}(n)
 + 17280 \,\EuFrak{b}_{64,8}(n)
 + 73728 \,\EuFrak{b}_{64,9}(n)
+ 221184 \,\EuFrak{b}_{64,10}(n)    \\
+ 331776 \,\EuFrak{b}_{64,11}(n)
 + 64512 \,\EuFrak{b}_{64,12}(n)   
- 221184\,\EuFrak{b}_{64,13}(n)
- 276480\,\EuFrak{b}_{64,14}(n)        \\
+ 368640 \,\EuFrak{b}_{64,15}(n)  
- \frac{564480}{13}\,\EuFrak{b}_{64,16}(n)
+ 1290240\,\EuFrak{b}_{64,17}(n)
+ 110592 \,\EuFrak{b}_{64,18}(n)
\,\biggr)q^{n}, \label{convolSum-eqn-1_64}  
\end{multline}
\end{corollary}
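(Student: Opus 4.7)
The plan is to apply \hyperref[convolution-lemma_a_b]{Lemma \ref*{convolution-lemma_a_b}} separately to each of the four levels $\alpha\beta = 45, 48, 50, 64$. First I would invoke \hyperref[evalConvolClass-lema-1]{Lemma \ref*{evalConvolClass-lema-1}} to place $(\alpha L(q^{\alpha}) - \beta L(q^{\beta}))^{2}$ in $\M_{4}(\Gamma_{0}(\alpha\beta))$, and then use \hyperref[basisCusp5_9]{Corollary \ref*{basisCusp5_9}}(c) to assert the existence of coefficients $X_{\delta}$, $Z(\chi)_{s}$, $Y_{j}\in\mathbb{C}$ realizing the expansion
\begin{equation*}
(\alpha L(q^{\alpha}) - \beta L(q^{\beta}))^{2}=\sum_{\delta|\alpha\beta}X_{\delta}M(q^{\delta})+\sum_{\chi\in\EuScript{C}}\sum_{s\in D(\chi)}Z(\chi)_{s}M_{\chi}(q^{s})+\sum_{j=1}^{m_{S}}Y_{j}\EuFrak{B}_{\alpha\beta,j}(q).
\end{equation*}

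Next I would expand both sides as $q$-series. The left-hand side is computed directly from \autoref{evalConvolClass-eqn-3}; the Eisenstein contributions on the right are read off from \autoref{evalConvolClass-eqn-4a} and \autoref{Eisenstein-gen} (with the Kronecker character $\legendre{-4}{n}$ for level $45$ and $\legendre{-3}{n}$ for levels $48, 50, 64$, noting $C_{0}=0$ in both cases since the conductor exceeds $1$), while each cusp $\EuFrak{B}_{\alpha\beta,j}(q)$ is expanded from its $\eta$-quotient representation recorded in Tables \ref*{convolutionSums-5_9-table}, \ref*{convolutionSums-3_16-table}, \ref*{convolutionSums-2_25-table} and \ref*{convolutionSums-1_64-table}. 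Equating coefficients of $q^{n}$ for $n$ running through $D(\alpha\beta)$ together with enough additional indices yields a square linear system in $m_{E}+m_{S}$ unknowns, which is $8+14=22$ for level $45$, $12+18=30$ for level $48$, $12+17=29$ for level $50$ and $12+18=30$ for level $64$.

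Then I would verify that the resulting coefficient matrix is nonsingular (this is guaranteed by the linear independence established in \hyperref[basisCusp5_9]{Corollary \ref*{basisCusp5_9}}), so that the system admits a unique solution, and read off the rational values $X_{\delta}$, $Z(\chi)_{s}$, $Y_{j}$ displayed in \autoref{convolSum-eqn-5_9}--\autoref{convolSum-eqn-1_64}. Finally, substituting these back into \autoref{convolution_a_b-eqn-0} produces exactly the stated expansions; in particular, for each of the four levels the coefficients $Z(\chi)_{s}$ turn out to vanish, which is why only Eisenstein terms of the form $\sigma_{3}(n/\delta)$ (with no Dirichlet twist) appear in the final expressions.

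The main obstacle will be purely computational: constructing the $q$-expansions of the many eta-quotient basis elements to sufficient precision, verifying nonsingularity of the coefficient matrices, and solving the resulting linear systems with exact rational arithmetic. This is why the introduction of the paper notes the reliance on symbolic computation software (\emph{GiNaC}, \emph{Maxima}, \emph{Reduce}, \emph{SAGE}, \emph{MAPLE}); the conceptual content of the corollary is entirely subsumed by \hyperref[convolution-lemma_a_b]{Lemma \ref*{convolution-lemma_a_b}}, and the proof reduces to the bookkeeping of a determined linear system for each of the four levels.
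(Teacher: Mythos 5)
Your proposal is correct and follows essentially the same route as the paper: the paper's proof likewise invokes Lemma \ref{convolution-lemma_a_b} (hence Lemma \ref{evalConvolClass-lema-1} and the bases of Corollary \ref{basisCusp5_9}), equates coefficients of $q^{n}$ for $n$ ranging over the divisors of $\alpha\beta$ plus enough additional indices, and solves the resulting uniquely determined linear system, with the twisted Eisenstein coefficients $Z(\chi)_{s}$ coming out zero exactly as you observe.
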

\begin{proof} It follows immediately from %Lemma 
\hyperref[convolution-lemma_a_b]{Lemma \ref*{convolution-lemma_a_b}}  
when one sets $\alpha=5$ and $\beta=9$. However, we briefly show the proof 
for $( 5\,L(q^{5}) - 9\,L(q^{9}) )^{2}$ as an example. One obtains 
\begin{align}
%\begin{split}
( 5\,L(q^{5}) - 9\,L(q^{9}) )^{2}  & =   
\sum_{\delta|45}\,x_{\delta} M(q^{\delta}) + z_{1}\,M_{\legendre{-4}{n}}(q) 
+ z_{3}\,M_{\legendre{-4}{n}}(q^{3})  
  + \sum_{j=1}^{14}\,y_{j}\,\EuFrak{B}_{45,j}(q)  \notag \\ &  
  = \sum_{\delta|45}\,x_{\delta} 
%  + C_{0}\,z_{1} + C_{0}\,z_{3}
  + \sum_{i=1}^{\infty}\biggl( 
  \sum_{\delta|45}\,240\,\sigma_{3}(\frac{n}{\delta})\,x_{\delta} 
  + \legendre{-4}{n}\,\sigma_{3}(n)\,z_{1} 
  + \legendre{-4}{n}\,\sigma_{3}(\frac{n}{3})\,z_{3}  \notag \\ &
  \quad + \sum_{j=1}^{14}\,\EuFrak{b}_{45,j}(n)\,y_{j}\,\biggr)\,q^{n}. 
      \label{convolution_5_9-eqn-0}
%\end{split}
\end{align} 
since the conductor of the Dirichlet character $\legendre{-4}{n}$ is $4$, and hence  
from \autoref{Eisenstein-gen} we have $C_{0}=0$.   
Now when we equate the right hand side of %Equation 
\autoref{convolution_5_9-eqn-0} with that of %Equation 
\autoref{evalConvolClass-eqn-11}, and when we take the 
coefficients of $q^{n}$ for which $1\leq n\leq 15$ and 
$n=17,19,21,23,25,27,45$ for example, we 
obtain a system of linear equations with a unique solution. 
Hence, we obtain the stated result.
\end{proof} 
 
Now we state and prove our main result of this subsection. 
\begin{corollary} \label{convolSum-theor-w_45_50}
Let $n$ be a positive integer. Then 
\begin{align}
 W_{(5,9)}(n)  = & 
   \frac{1}{5616}\sigma_{3}(n) 
   + \frac{433}{398736}\sigma_{3}(\frac{n}{3}) 
    + \frac{25}{5616}\sigma_{3}(\frac{n}{5}) 
    + \frac{13}{568}\sigma_{3}(\frac{n}{9})        \notag \\ &
    + \frac{42415}{398736}\sigma_{3}(\frac{n}{15}) 
    - \frac{100}{71}\sigma_{3}(\frac{n}{45}) 
    + (\frac{1}{24}-\frac{1}{36}n)\sigma(\frac{n}{5}) \notag \\ &
   + (\frac{1}{24}-\frac{1}{20}n)\sigma(\frac{n}{9})    
   + \frac{403}{1456920} \,\EuFrak{b}_{45,1}(n) 
   - \frac{3323}{971280} \,\EuFrak{b}_{45,2}(n)      \notag \\ &
  - \frac{448373}{37879920} \,\EuFrak{b}_{45,3}(n) 
  - \frac{15889}{1456920} \,\EuFrak{b}_{45,4}(n) 
   + \frac{6406}{182115} \,\EuFrak{b}_{45,5}(n)    \notag \\ &
   - \frac{5197}{291384} \,\EuFrak{b}_{45,6}(n)   
  - \frac{3331}{40470} \,\EuFrak{b}_{45,7}(n) 
  - \frac{257}{2556} \,\EuFrak{b}_{45,8}(n)       \notag \\ &
   - \frac{52159}{1456920} \,\EuFrak{b}_{45,9}(n) 
   - \frac{35}{426} \,\EuFrak{b}_{45,10}(n)         
  + \frac{3331}{16188} \,\EuFrak{b}_{45,11}(n)      \notag \\ &
  - \frac{5741}{12626640} \,\EuFrak{b}_{45,12}(n) 
  - \frac{49}{291384} \,\EuFrak{b}_{45,13}(n). 
%  + 0\,\,\EuFrak{b}_{45,14}(n). 
\label{convolutionSum-w_5_9}
\end{align}
\begin{align}
 W_{(1,45)}(n)  = &
   \frac{217}{1872}\sigma_{3}(n) 
   + \frac{433}{398736}\sigma_{3}(\frac{n}{3}) 
    - \frac{625}{5616}\sigma_{3}(\frac{n}{5}) 
    - \frac{25}{71}\sigma_{3}(\frac{n}{9})         \notag \\ &
    + \frac{42415}{398736}\sigma_{3}(\frac{n}{15}) 
    - \frac{587}{568}\sigma_{3}(\frac{n}{45}) 
    + (\frac{1}{24}-\frac{1}{36}n)\sigma(\frac{n}{5}) \notag \\ &
   + (\frac{1}{24}-\frac{1}{20}n)\sigma(\frac{n}{9})    
   + \frac{403}{1456920} \,\EuFrak{b}_{45,1}(n) 
   - \frac{3323}{971280} \,\EuFrak{b}_{45,2}(n)          \notag \\ &
  - \frac{448373}{37879920} \,\EuFrak{b}_{45,3}(n) 
  - \frac{15889}{1456920} \,\EuFrak{b}_{45,4}(n) 
   + \frac{6406}{182115} \,\EuFrak{b}_{45,5}(n)      \notag \\ &
   - \frac{5197}{291384} \,\EuFrak{b}_{45,6}(n)     
  - \frac{3331}{40470} \,\EuFrak{b}_{45,7}(n) 
  - \frac{257}{2556} \,\EuFrak{b}_{45,8}(n)        \notag \\ &
   - \frac{52159}{1456920} \,\EuFrak{b}_{45,9}(n)     
   - \frac{35}{426} \,\EuFrak{b}_{45,10}(n)       
  + \frac{3331}{16188} \,\EuFrak{b}_{45,11}(n)     \notag \\ &
  - \frac{5741}{12626640} \,\EuFrak{b}_{45,12}(n) 
  - \frac{49}{291384} \,\EuFrak{b}_{45,13}(n). 
% + 0\,\,\EuFrak{b}_{45,14}(n). 
\label{convolutionSum-w_1_45}
\end{align}
\begin{align}
W_{(1,48)}(n)   = & 
\frac{1}{7680}\,\sigma_{3}(n)
+ \frac{189}{33280}\,\sigma_{3}(\frac{n}{2})
+\frac{3}{2560}\,\sigma_{3}(\frac{n}{3})
+ \frac{673}{8320}\,\sigma_{3}(\frac{n}{4}) \notag \\ &
 -\frac{59}{33280}\,\sigma_{3}(\frac{n}{6})
 -\frac{163}{2080}\,\sigma_{3}(\frac{n}{8})
 -\frac{543}{8320}\,\sigma_{3}(\frac{n}{12})
 + \frac{1}{30}\,\sigma_{3}(\frac{n}{16})  
+ \frac{293}{2080}\,\sigma_{3}(\frac{n}{24})  \notag \\ &
+ \frac{3}{10}\,\sigma_{3}(\frac{n}{48})
%  0  
%  0 
 + (\frac{1}{24}-\frac{1}{192}n)\sigma(n)         
 + (\frac{1}{24}-\frac{1}{4}n)\sigma(\frac{n}{48})  
-\frac{359}{33280}\,\EuFrak{b}_{48,1}(n)    \notag \\ &
-\frac{1481}{8320}\,\EuFrak{b}_{48,2}(n) 
-\frac{5063}{33280}\,\EuFrak{b}_{48,3}(n) 
 -\frac{383}{2080}\,\EuFrak{b}_{48,4}(n) 
 -\frac{113}{2080}\,\EuFrak{b}_{48,5}(n) \notag \\ &
-\frac{8633}{8320}\,\EuFrak{b}_{48,6}(n) 
 -\frac{59}{240}\,\EuFrak{b}_{48,7}(n) 
-\frac{1569}{1040}\,\EuFrak{b}_{48,8}(n) 
 -\frac{479}{390}\,\EuFrak{b}_{48,9}(n)   \notag \\ &
 -\frac{57}{65}\,\EuFrak{b}_{48,10}(n) 
 -\frac{619}{1560}\,\EuFrak{b}_{48,11}(n) 
-\frac{4093}{1040}\,\EuFrak{b}_{48,12}(n) 
+ \frac{212}{195}\,\EuFrak{b}_{48,13}(n)   \notag \\ &
+ \frac{245}{104}\,\EuFrak{b}_{48,14}(n)   
 -\frac{17}{195}\,\EuFrak{b}_{48,15}(n) 
 -\frac{132}{13}\,\EuFrak{b}_{48,16}(n) 
 -\frac{56}{39}\,\EuFrak{b}_{48,17}(n)      \notag \\ &
 -\frac{161}{6240}\,\EuFrak{b}_{48,18}(n), 
\label{convolSum-theor-w_1_48}  
\end{align} 
\begin{align}
W_{(3,16)}(n)  = & 
 \frac{1}{7680}\,\sigma_{3}(n)
-\frac{399}{965120}\,\sigma_{3}(\frac{n}{2}) 
+\frac{3}{2560}\,\sigma_{3}(\frac{n}{3}) 
-\frac{2533}{241280}\,\sigma_{3}(\frac{n}{4}) \notag \\ &
 +\frac{4169}{965120}\,\sigma_{3}(\frac{n}{6}) 
+\frac{1153}{60320}\,\sigma_{3}(\frac{n}{8}) 
 +\frac{6303}{241280}\,\sigma_{3}(\frac{n}{12}) 
 +\frac{1}{30}\,\sigma_{3}(\frac{n}{16})      \notag \\ &
+\frac{2617}{60320}\,\sigma_{3}(\frac{n}{24}) 
 +\frac{3}{10}\,\sigma_{3}(\frac{n}{48}) 
%  0  
%  0  
 + (\frac{1}{24}-\frac{1}{64}n)\sigma(\frac{n}{3}) \notag \\ &
 + (\frac{1}{24}-\frac{1}{12}n)\sigma(\frac{n}{16})    
 +\frac{321}{33280}\, \EuFrak{b}_{48,1}(n) 
-\frac{7259}{241280}\, \EuFrak{b}_{48,2}(n) \notag \\ &
+\frac{1297}{33280}\, \EuFrak{b}_{48,3}(n) 
+\frac{2643}{60320}\, \EuFrak{b}_{48,4}(n) 
 + \frac{67}{2080}\, \EuFrak{b}_{48,5}(n) 
 -\frac{47}{241280}\, \EuFrak{b}_{48,6}(n) \notag \\ &
 -\frac{79}{240}\, \EuFrak{b}_{48,7}(n) 
+\frac{3529}{30160}\, \EuFrak{b}_{48,8}(n) 
- \frac{32}{195}\, \EuFrak{b}_{48,9}(n) 
- \frac{183}{1885}\, \EuFrak{b}_{48,10}(n)   \notag \\ &
 + \frac{61}{1560}\, \EuFrak{b}_{48,11}(n) 
+ \frac{34193}{30160}\, \EuFrak{b}_{48,12}(n) 
+ \frac{329}{390}\, \EuFrak{b}_{48,13}(n) 
+ \frac{1037}{3016}\, \EuFrak{b}_{48,14}(n)    \notag \\ &
+ \frac{61}{390}\, \EuFrak{b}_{48,15}(n) 
+ \frac{582}{377}\, \EuFrak{b}_{48,16}(n) 
- \frac{251}{78}\, \EuFrak{b}_{48,17}(n) 
- \frac{61}{6240}\, \EuFrak{b}_{48,18}(n),
\label{convolSum-theor-w_3_16}  
 \end{align}
\begin{align}
 W_{(2,25)}(n)  =  & 
 - \frac{9}{8320}\,\sigma_{3}(n)
+ \frac{17}{12480}\,\sigma_{3}(\frac{n}{2}) 
+\frac{107}{24960}\,\sigma_{3}(\frac{n}{5}) 
+ \frac{11}{960}\,\sigma_{3}(\frac{n}{10}) \notag \\ &
 + \frac{25}{312}\,\sigma_{3}(\frac{n}{25}) 
+ \frac{25}{78}\,\sigma_{3}(\frac{n}{50}) 
%  0  
%  0  
%  0  
%  0  
%  0  
%  0  
 + (\frac{1}{24}-\frac{1}{100}n)\sigma(\frac{n}{2}) 
 + (\frac{1}{24}-\frac{1}{8}n)\sigma(\frac{n}{25})  \notag \\ & 
 +\frac{9}{8320}\, \EuFrak{b}_{50,1}(n) 
-\frac{373}{41600}\, \EuFrak{b}_{50,2}(n) 
+\frac{9}{320}\, \EuFrak{b}_{50,3}(n) 
+\frac{47}{640}\, \EuFrak{b}_{50,4}(n)       \notag \\ &
 + \frac{1193}{4992}\, \EuFrak{b}_{50,5}(n) 
 + \frac{45}{128}\, \EuFrak{b}_{50,6}(n) 
 -\frac{9}{640}\, \EuFrak{b}_{50,7}(n) 
+\frac{29}{128}\, \EuFrak{b}_{50,8}(n)     \notag \\ &
 -\frac{69}{320}\, \EuFrak{b}_{50,9}(n) 
 +\frac{19}{208}\, \EuFrak{b}_{50,10}(n) 
 + \frac{5}{64}\, \EuFrak{b}_{50,11}(n) 
+\frac{5}{8}\, \EuFrak{b}_{50,12}(n)        \notag \\ &
+ \frac{47}{128}\, \EuFrak{b}_{50,13}(n)   
- \frac{1}{32}\, \EuFrak{b}_{50,14}(n) 
+ \frac{25}{96}\, \EuFrak{b}_{50,15}(n) 
- \frac{23}{64}\, \EuFrak{b}_{50,16}(n)  \notag \\ &
+ \frac{1}{2}\, \EuFrak{b}_{50,17}(n) 
\label{convolutionSum-w_2_25}
\end{align}
\begin{align}
 W_{(1,50)}(n)  = &
- \frac{149}{24960}\,\sigma_{3}(n)
- \frac{15}{832}\,\sigma_{3}(\frac{n}{2})
+ \frac{229}{24960}\,\sigma_{3}(\frac{n}{5})
+ \frac{77}{2496}\,\sigma_{3}(\frac{n}{10})    \notag \\ &
+ \frac{25}{312}\,\sigma_{3}(\frac{n}{25})
+ \frac{25}{78}\,\sigma_{3}(\frac{n}{50})
%  0  
%  0 
%    0 
%    0 
%    0 
%    0 
 + (\frac{1}{24}-\frac{1}{200}n)\sigma(n)         
 + (\frac{1}{24}-\frac{1}{4}n)\sigma(\frac{n}{50})  \notag \\ &
- \frac{1277}{41600}\,\EuFrak{b}_{50,1}(n) 
- \frac{243}{1664}\,\EuFrak{b}_{50,2}(n) 
+ \frac{39}{320}\,\EuFrak{b}_{50,3}(n) 
+ \frac{45}{128}\,\EuFrak{b}_{50,4}(n)       \notag \\ &
+ \frac{4807}{4992}\,\EuFrak{b}_{50,5}(n)    
+ \frac{259}{128}\,\EuFrak{b}_{50,6}(n) 
+ \frac{5}{128}\,\EuFrak{b}_{50,7}(n) 
+ \frac{275}{128}\,\EuFrak{b}_{50,8}(n)      \notag \\ &
- \frac{111}{64}\,\EuFrak{b}_{50,9}(n) 
+ \frac{253}{208}\,\EuFrak{b}_{50,10}(n)    
+ \frac{43}{64}\,\EuFrak{b}_{50,11}(n) 
+ \frac{43}{8}\,\EuFrak{b}_{50,12}(n)        \notag \\ &
- \frac{31}{128}\,\EuFrak{b}_{50,13}(n) 
+ \frac{49}{32}\,\EuFrak{b}_{50,14}(n) 
+ \frac{215}{96}\,\EuFrak{b}_{50,15}(n)      
+ \frac{7}{64}\,\EuFrak{b}_{50,16}(n)        \notag \\ &
- \frac{1}{2}\,\EuFrak{b}_{50,17}(n). 
\label{convolutionSum-w_1_50}
\end{align}
\begin{align}
W_{(1,64)}(n)  = & 
     \frac{1}{12288}\,\sigma_{3}(n)
  + \frac{1}{4096}\,\sigma_{3}(\frac{n}{2})    
     - \frac{967}{13312} \,\sigma_{3}(\frac{n}{4}) 
   +  \frac{129}{1664}\,\sigma_{3}(\frac{n}{8})     \notag \\ & 
  + \frac{1}{64}\,\sigma_{3}(\frac{n}{16})   
  + \frac{1}{16}\,\sigma_{3}(\frac{n}{32})
  + \frac{1}{3}\,\sigma_{3}(\frac{n}{64})
 + (\frac{1}{24}-\frac{1}{208}n)\sigma(n)  \notag \\ & 
 + (\frac{1}{24}-\frac{1}{4}n)\sigma(\frac{n}{64})   
%   0   
%   0         
%   0          
%   0         
%   0          
 - \frac{155}{4096} \,\EuFrak{b}_{64,1}(n) 
 - \frac{105}{1024}\,\EuFrak{b}_{64,2}(n)         
  - \frac{35}{128}\,\EuFrak{b}_{64,3}(n)    \notag \\ &        
  - \frac{49}{416} \,\EuFrak{b}_{64,4}(n)      
  - \frac{21}{16} \,\EuFrak{b}_{64,5}(n)        
  - \frac{21}{32} \,\EuFrak{b}_{64,6}(n)        
  - \frac{21}{16}\,\EuFrak{b}_{64,7}(n)    \notag \\ &        
  - \frac{15}{64} \,\EuFrak{b}_{64,8}(n)         
  - \,\EuFrak{b}_{64,9}(n)        
  -3 \,\EuFrak{b}_{64,10}(n)        
 - \frac{9}{2}\,\EuFrak{b}_{64,11}(n)        
 - \frac{7}{8}\,\EuFrak{b}_{64,12}(n)   \notag \\ &      
  + 3\,\EuFrak{b}_{64,13}(n)         
 + \frac{15}{4}\,\EuFrak{b}_{64,14}(n)        
  -5 \,\EuFrak{b}_{64,15}(n)             
  + \frac{245}{416}\,\EuFrak{b}_{64,16}(n)   \notag \\ &     
 - \frac{35}{2} \,\EuFrak{b}_{64,17}(n)      
 - \frac{3}{2}\,\EuFrak{b}_{64,18}(n). 
\label{convolSum-theor-w_1_64}  
\end{align} 
\end{corollary}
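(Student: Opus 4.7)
The plan is to derive each of the seven formulae by direct specialization of \autoref{convolution_a_b}, since the hard work of expanding each $(\alpha L(q^{\alpha}) - \beta L(q^{\beta}))^{2}$ as a linear combination of basis vectors of $\M_{4}(\Gamma_{0}(\alpha\beta))$ has already been carried out in \autoref{lema_45_50}. By \autoref{evalConvolClass-lema-1} this squared combination lies in the appropriate space of modular forms, and \autoref{basisCusp5_9} provides an explicit basis, so the constants $X_{\delta}$, $Z(\chi)_{s}$, and $Y_{j}$ introduced in \autoref{convolution-lemma_a_b} are precisely the numerical coefficients appearing in \autoref{convolSum-eqn-5_9}--\autoref{convolSum-eqn-1_64}.

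Concretely, for each pair $(\alpha,\beta) \in \{(5,9),(1,45),(1,48),(3,16),(2,25),(1,50),(1,64)\}$, I would first recover $X_{\delta}$ by dividing the coefficient of $\sigma_{3}(n/\delta)$ in the corresponding expansion of \autoref{lema_45_50} by $240$, record the values $Z(\chi)_{s}$ from the twisted Eisenstein pieces (here the $M_{\legendre{-4}{n}}$ and $M_{\legendre{-3}{n}}$ contributions, many of which vanish, in agreement with the remark following \autoref{convolution_a_b}), and read off each $Y_{j}$ as the coefficient of $\EuFrak{b}_{\alpha\beta,j}(n)$. I would then substitute these into the closed form of \autoref{convolution_a_b}, separating the $\sigma_{3}(n/\alpha)$ and $\sigma_{3}(n/\beta)$ contributions from the remaining divisor terms because of the extra $\alpha^{2}$ and $\beta^{2}$ summands inherited from \autoref{evalConvolClass-eqn-11}. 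The two universal pieces $(\tfrac{1}{24}-\tfrac{n}{4\beta})\sigma(n/\alpha) + (\tfrac{1}{24}-\tfrac{n}{4\alpha})\sigma(n/\beta)$ are copied verbatim from the general formula.

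The only real obstacle is arithmetic bookkeeping: combining the rationals obtained in \autoref{lema_45_50} with the factor $\tfrac{1}{1152\,\alpha\beta}$ (or $\tfrac{5}{24\,\alpha\beta}$) from \autoref{convolution_a_b} so as to produce the reduced fractions listed in \autoref{convolutionSum-w_5_9}--\autoref{convolSum-theor-w_1_64}, and doing so uniformly across the seven cases. No conceptual step enters beyond the preceding corollary and theorem, and each identity can be verified mechanically using the symbolic computation packages cited in the introduction.
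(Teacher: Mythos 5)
Your proposal matches the paper's own proof, which simply invokes \autoref{convolution_a_b} with the specific pairs $(\alpha,\beta)$ and the coefficients already computed in \autoref{lema_45_50}; your added detail about recovering $X_{\delta}$, $Z(\chi)_{s}$ and $Y_{j}$ from those expansions and carrying the factors $\tfrac{5}{24\alpha\beta}$ and $\tfrac{1}{1152\alpha\beta}$ is exactly the intended mechanical substitution. This is essentially the same approach, just spelled out more explicitly.
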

 
\begin{proof} 
It follows immediately from \autoref{convolution_a_b} when we set 
$(\alpha,\beta)=(1,16)$, $(1,25)$, $(5,9)$, $(1,45)$, $(2,25)$, $(1,50)$, $(1,64)$. 
\end{proof}

%%%%%%%%%%% Number of representation of n %%%%%

\section{Number of Representations of a positive Integer  $n$  by the Octonary 
Quadratic Form \autoref{introduction-eq-1}
}
\label{representations_48_64-ab}

We apply the convolution sums $W_{(1,48)}(n)$, 
$W_{(3,16)}(n)$, $W_{(1,64)}(n)$ and 
other known evaluated convolution sums to determine 
explicit formulae for the number of representations of a positive
integer $n$  by the octonary quadratic form \autoref{introduction-eq-1}.

Since $64=2^{6}$ and $48=2^{4}\cdot 3$ it follows from 
\autoref{representations_a_b-eqn-1} that   
$\Omega_{4}=\{(1,16)\}\cup\{(3,4), (1,12)\}$.

The following result is then deduced.
\begin{corollary} \label{representations-thrm_3_4}
Let $n\in\mathbb{N}$ and $(a,b)=(1,12),(1,16),(3,4)$. Then  
\begin{align*}
N_{(1,12)}(n)  = & 
8\sigma(n) - 32\sigma(\frac{n}{4}) + 8\sigma(\frac{n}{12}) -
32\sigma(\frac{n}{48}) + 64\, W_{(1,12)}(n)  \\ & 
+ 1024\, W_{(1,12)}(\frac{n}{4}) 
 - 256\, \biggl( W_{(1,3)}(\frac{n}{4}) + W_{(1,48)}(n) \biggr). \\ 
N_{(1,16)}(n)  = & 
8\sigma(n) - 32\sigma(\frac{n}{4}) + 8\sigma(\frac{n}{16}) -
32\sigma(\frac{n}{64}) + 64\, W_{(1,16)}(n) \\ & 
+ 1024\, W_{(1,16)}(\frac{n}{4}) 
 - 256\, \biggl( W_{(1,4)}(\frac{n}{4}) + W_{(1,64)}(n) \biggr). \\ 
N_{(3,4)}(n)  = & 
8\sigma(\frac{n}{3}) - 32\sigma(\frac{n}{12}) + 8\sigma(\frac{n}{4}) -
32\sigma(\frac{n}{16}) + 64\, W_{(3,4)}(n) \\ &
 + 1024\, W_{(3,4)}(\frac{n}{4}) - 256\, \biggl( W_{(1,3)}(\frac{n}{4}) 
 + W_{(3,16)}(n) \biggr). 
\end{align*}
\end{corollary}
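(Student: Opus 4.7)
The plan is to apply Theorem \ref{representations-theor_a_b} directly to each of the three pairs $(a,b)=(1,12),(1,16),(3,4)$, and then reduce any convolution sum of the form $W_{(\alpha,\beta)}$ in which $\gcd(\alpha,\beta)>1$ to a convolution sum in coprime parameters by extracting the common factor. So the first step is simply to verify that each pair lies in $\Omega_{4}$ for $\alpha\beta=48$ or $\alpha\beta=64$, which is immediate from the construction in Section \ref{deter-representations_a_b}: one has $48=2^{4}\cdot 3$ giving $\Lambda=12$ and pairs $(1,12),(3,4)\in\Omega_{4}$, while $64=2^{6}$ gives $\Lambda=16$ and pair $(1,16)\in\Omega_{4}$.

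Next, I would substitute into the formula of Theorem \ref{representations-theor_a_b}. The only nontrivial step is that two of the pieces $W_{(4a,b)}$ and $W_{(a,4b)}$ can have $\gcd(4a,b)>1$ or $\gcd(a,4b)>1$ even when $\gcd(a,b)=1$. Specifically, for $(1,12)$ one gets $W_{(4,12)}(n)$ with $\gcd=4$; for $(1,16)$ one gets $W_{(4,16)}(n)$ with $\gcd=4$; for $(3,4)$ one gets $W_{(12,4)}(n)$ with $\gcd=4$. In each of these cases I would use the elementary reduction
\[
W_{(4\alpha',4\beta')}(n)
=\sum_{\substack{(l,m)\in\mathbb{N}^{2}\\ 4\alpha'l+4\beta'm=n}}\sigma(l)\sigma(m)
=\sum_{\substack{(l,m)\in\mathbb{N}^{2}\\ \alpha'l+\beta'm=n/4}}\sigma(l)\sigma(m)
=W_{(\alpha',\beta')}\!\left(\tfrac{n}{4}\right),
\]
(where $W_{(\alpha',\beta')}(n/4)=0$ if $4\nmid n$), as already remarked in Section \ref{convolutionSumsEqns} and used by Ntienjem in \cite{ntienjem2016c}. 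This gives $W_{(4,12)}(n)=W_{(1,3)}(n/4)$, $W_{(4,16)}(n)=W_{(1,4)}(n/4)$, and $W_{(12,4)}(n)=W_{(3,1)}(n/4)=W_{(1,3)}(n/4)$.

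The remaining sums $W_{(1,48)}(n)$, $W_{(1,64)}(n)$, $W_{(3,16)}(n)$ are exactly those evaluated in Corollary \ref{convolSum-theor-w_45_50}, while $W_{(1,12)}(n)$, $W_{(1,16)}(n)$, $W_{(3,4)}(n)$, $W_{(1,3)}(n/4)$, $W_{(1,4)}(n/4)$ appear in the table of already known convolution sums (Table \ref{introduction-table-1}). Plugging these identifications into Theorem \ref{representations-theor_a_b} yields the three formulae verbatim.

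There is essentially no obstacle here: the corollary is a direct specialization of the general representation theorem combined with the trivial $\gcd$-extraction identity above. The only mild bookkeeping point is making sure one selects the right side of the pair to rescale, so that the evaluated sums on the right-hand side match the ones for which explicit formulae are available. I would therefore present the proof as a one-paragraph verification: apply Theorem \ref{representations-theor_a_b} with the three pairs, then use the $\gcd$-extraction to rewrite $W_{(4,12)}$, $W_{(4,16)}$, $W_{(12,4)}$ as $W_{(1,3)}(n/4)$, $W_{(1,4)}(n/4)$, $W_{(1,3)}(n/4)$ respectively, and collect terms.
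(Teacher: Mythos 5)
Your proposal is correct and follows essentially the same route as the paper: the paper likewise derives the three identities as direct specializations of Theorem \ref{representations-theor_a_b} and then invokes the known evaluations (Huard et al., Alaca et al., and Corollary \ref{convolSum-theor-w_45_50}) for the resulting sums. The only difference is that you make explicit the reduction $W_{(4a,4b)}(n)=W_{(a,b)}(n/4)$ used to rewrite $W_{(4,12)}$, $W_{(4,16)}$ and $W_{(12,4)}$, a step the paper leaves implicit in the statement of the corollary.
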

\begin{proof} 
These identities follow immediately from  \autoref{representations-theor_a_b}. 
We can make use of the results obtained by \aA\ et al. \cite{alaca_alaca_williams2006} 
and \jgH\ et al.\ \cite[Thrm 3, p.\ 20]{huardetal}, and 
\autoref{convolSum-theor-w_3_16} and \autoref{convolSum-theor-w_1_48} to 
simplify for example $N_{(1,12)}(n)$ and $N_{(3,4)}(n)$.
\end{proof}

\section{Number of Representations of a Positive Integer $n$  by 
the Octonary Quadratic Form \autoref{introduction-eq-2}
} 
\label{representations_5_9}

We make use of the convolution sums $W_{(5,9)}(n)$, $W_{(1,45)}(n)$, 
$W_{(3,16)}(n)$, $W_{(1,48)}(n)$ and 
other well-known convolution sums to determine 
explicit formulae for the number of representations of a positive
integer $n$  by the octonary quadratic form \autoref{introduction-eq-2}.

Since $45=3^{2}\cdot 5$ and $48=2^{4}\cdot 3$ it follows from 
\autoref{representations_c_d-eqn-1} that   
$\Omega_{3}=\{(3,5), (1,15)\}\cup\{ (1,16)\}$. 

We revisit the evaluation of the convolution sums for $\alpha\beta=5, 15$ 
using modular forms.  
The result for $\alpha\beta=5$ was obtained by \mL\ and \ksW\ \cite{lemire_williams}, 
and \sC\ and \pcT\ \cite{cooper_toh}; that for $\alpha\beta=15$ was achieved by 
\bR\ and \bS\ \cite{ramakrishnan_sahu} when using 
a basis which contains one cusp form of weight $2$. We note that 
$\alpha\beta=5, 15$ belong to the class of positive integers $\alpha\beta$ 
discussed by \eN\ \cite{ntienjem2016c}. 
Therefore, it suffices to determine a basis of the space of cusp forms for 
$\Gamma_{0}(\alpha\beta))$ and apply \cite[Thrm 3.4]{ntienjem2016c}. 
Because of \autoref{bas-s-eqn-5_45},  %$\EuFrak{B}_{45,3}(q^{\frac{1}{3}})$,
$\EuFrak{B}_{45,2}(q)$, 
$\EuFrak{B}_{45,3}(q)$
and in addition 
\begin{gather*}
\EuFrak{B}'_{45,1}(q)=\eta^{4}(z)\eta^{4}(5z)=\underset{n
\geq 1}{\sum}\EuFrak{b}'_{45,1}(n)q^{n}, \\
\EuFrak{B}'_{45,4}(q)=\frac{\eta^{3}(z)\eta(3z)\eta^{7}(15z)}{\eta^{3}(5z)}=\underset{n
\geq 1}{\sum}\EuFrak{b}'_{45,4}(n)q^{n},
\end{gather*}
are basis elements of $\S_{4}(\Gamma_{0}(15))$.  
We note that $\EuFrak{B}'_{45,1}(q)$ is the basis element of $\S_{4}(\Gamma_{0}(5))$.  
%is obtained from $\EuFrak{B}_{45,3}(q)$ when 
%we apply \cite[Thrm 4.6.4, p.\ 155]{miyake1989}. Therefore, it holds that  
%$\EuFrak{b}_{5,1}(n)=\EuFrak{b}_{45,3}(3n)$.
\begin{theorem} \label{lemire_williams-cooper_toh}
Let $n$ be a positive integer. Then 
\begin{align}  
W_{(1,5)}(n) = & 
  \frac{5}{312}\sigma_{3}(n) 
  + \frac{125}{132}\sigma_{3}(\frac{n}{5})
  + (\frac{1}{24}-\frac{1}{20}n)\sigma(n)  
  + (\frac{1}{24}-\frac{1}{4}n)\sigma(\frac{n}{5})   \notag \\  & 
   - \frac{1}{130}\EuFrak{b}'_{45,1}(n) ,\\
W_{(3,5)}(n)  =  & \frac{1}{390}\sigma_{3}(n) 
  + \frac{7}{520}\sigma_{3}(\frac{n}{3}) 
  - \frac{175}{312}\sigma_{3}(\frac{n}{5})  
  + \frac{25}{26}\sigma_{3}(\frac{n}{15})         \notag \\ & 
  + (\frac{1}{24}-\frac{n}{20})\sigma(\frac{n}{3}) 
  + (\frac{1}{24}-\frac{n}{12})\sigma(\frac{n}{5})  
  - \frac{1}{390}\EuFrak{b}'_{45,1}(n)  
  - \frac{1}{30}\EuFrak{b}_{45,2}(n)     \notag \\ &    %c_{2}(n) 
  - \frac{1}{26}\EuFrak{b}_{45,3}(n)    %c_{3}(n) 
  - \frac{1}{5}\EuFrak{b}'_{45,4}(n)    \\   %c_{4}(n), \\
W_{(1,15)}(n)  =  & \frac{1}{1560}\sigma_{3}(n) 
  + \frac{1}{65}\sigma_{3}(\frac{n}{3}) 
  + \frac{25}{39}\sigma_{3}(\frac{n}{5})  
  - \frac{25}{104}\sigma_{3}(\frac{n}{15}) \notag \\ & 
  + (\frac{1}{24}-\frac{n}{60})\sigma(n) 
  + (\frac{1}{24}-\frac{n}{4})\sigma(\frac{n}{15})  
  - \frac{1}{39}\EuFrak{b}'_{45,1}(n)     %c_{1}(n)  
  - \frac{2}{15}\EuFrak{b}_{45,2}(n)     \notag \\ &      %c_{2}(n) \notag \\ & 
  - \frac{14}{65}\EuFrak{b}_{45,3}(n)                 %c_{3}(n) 
  + \frac{1}{5}\EuFrak{b}'_{45,4}(n).      %c_{4}(n).
\end{align}
\end{theorem}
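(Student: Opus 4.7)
The plan is to handle each of the three convolution sums by specialising the general machinery developed in Sections~\ref{modularForms}--\ref{convolution_alpha_beta} to the levels $N=5$ and $N=15$. As the author remarks, both $5$ and $15$ belong to the class of levels $\alpha\beta\in\N$ treated in \cite{ntienjem2016c}, so the set $\EuScript{C}$ of non--trivial Dirichlet characters contributing to a basis of $\E_{4}(\Gamma_{0}(N))$ is empty. Consequently the formula of \autoref{convolution_a_b} collapses to its character--free version, i.e.\ \cite[Thrm 3.4]{ntienjem2016c}, and the only remaining tasks are to exhibit explicit bases of $\M_{4}(\Gamma_{0}(5))$ and $\M_{4}(\Gamma_{0}(15))$ and to solve the resulting linear systems of coefficients.

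First I would verify, using \autoref{ligozat_theorem} (i)--(v$'$), that
$\EuFrak{B}'_{45,1}(q)=\eta^{4}(z)\eta^{4}(5z)$ lies in $\S_{4}(\Gamma_{0}(5))$, and that
$\EuFrak{B}'_{45,1}(q)$, $\EuFrak{B}_{45,2}(q)$, $\EuFrak{B}_{45,3}(q)$, $\EuFrak{B}'_{45,4}(q)$ lie in $\S_{4}(\Gamma_{0}(15))$ (the latter two via the inclusion $\M_{4}(\Gamma_{0}(5))\subset \M_{4}(\Gamma_{0}(15))$ of \autoref{bas-s-eqn-5_45}). The dimension formulae of Miyake/Stein give $\dim\S_{4}(\Gamma_{0}(5))=1$ and $\dim\S_{4}(\Gamma_{0}(15))=4$, and the matrix of initial $q$--expansion coefficients of the selected $\eta$--quotients has non--zero determinant, so by the argument in the proof of \autoref{basisCusp_a_b}(b) these sets are bases. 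Together with $\{M(q^{t})\mid t\mid N\}$ they then form bases of $\M_{4}(\Gamma_{0}(5))$ and $\M_{4}(\Gamma_{0}(15))$.

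Next I would expand
\[
\bigl(\alpha\,L(q^{\alpha})-\beta\,L(q^{\beta})\bigr)^{2}
\;\in\;\M_{4}(\Gamma_{0}(\alpha\beta))
\]
for the three pairs $(\alpha,\beta)=(1,5),(3,5),(1,15)$ in the corresponding bases, producing expressions of the shape in \autoref{convolution_a_b-eqn-0} with all $Z(\chi)_{s}$ absent. Equating coefficients of $q^{n}$ for $n$ ranging over $D(\alpha\beta)$ together with enough additional indices to reach the total number of unknowns ($m_{E}+m_{S}=3,\,8,\,8$ respectively), and comparing with the right--hand side of \autoref{evalConvolClass-eqn-11}, yields a square linear system whose unique solution determines $X_{\delta}$ and $Y_{j}$. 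Substituting these values into the formula of \autoref{convolution_a_b} (with the character terms dropped) gives the three stated identities for $W_{(1,5)}(n)$, $W_{(3,5)}(n)$ and $W_{(1,15)}(n)$.

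The main obstacle is not conceptual but computational: correctly identifying a list of $\eta$--quotients in $\S_{4}(\Gamma_{0}(15))$ whose leading $q$--expansion coefficients give a non--singular matrix (so that the basis--selection procedure of \autoref{basisCusp_a_b}(b) terminates at the first try), and carrying out the rational arithmetic in the resulting $8\times 8$ linear system without error. This is precisely where symbolic computation packages such as \emph{MAPLE} or \emph{SAGE} become indispensable, as noted in the introduction. Once the linear system is solved, the three formulae follow mechanically from \autoref{convolution_a_b}, and they should recover, in particular, the earlier evaluations of $W_{(1,5)}(n)$ by \mL\ \& \ksW\ and \sC\ \& \pcT, and of $W_{(1,15)}(n)$ by \bR\ \& \bS, providing a consistency check.
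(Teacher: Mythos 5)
Your proposal is correct and follows essentially the same route as the paper: the author likewise observes that $5$ and $15$ lie in the class with $\EuScript{C}=\emptyset$, so it suffices to take the cusp bases $\{\EuFrak{B}'_{45,1}\}$ for level $5$ and $\{\EuFrak{B}'_{45,1},\EuFrak{B}_{45,2},\EuFrak{B}_{45,3},\EuFrak{B}'_{45,4}\}$ for level $15$ and apply the character-free formula \cite[Thrm 3.4]{ntienjem2016c}, solving the resulting linear systems. Your dimension counts and system sizes agree with the paper's setup, so nothing is missing.
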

We make use of these results to deduce the following. 
\begin{corollary} \label{representations-coro-5_9-1}
Let $n\in\mathbb{N}$ and $c,d)=(1,15),(3,5), (1,16)$. Then  
\begin{align*}
R_{(1,15)}(n) =  & 12\sigma(n) - 36\sigma(\frac{n}{3}) + 12\sigma(\frac{n}{15}) -
36\sigma(\frac{n}{45}) + 144 \, W_{(1,15)}(n) \\ &
   + 1296\, W_{(1,15)}(\frac{n}{3}) - 432\, \biggl(\, W_{(1,5)}(\frac{n}{3}) + 
   W_{(1,45)}(n)\, \biggr). 
\end{align*}
\begin{align*}
R_{(3,5)}(n) = & 12\sigma(\frac{n}{3}) - 36\sigma(\frac{n}{9}) + 12\sigma(\frac{n}{5}) -
36\sigma(\frac{n}{15}) + 144\, W_{(3,5)}(n) \\ & 
   + 1296\, W_{(3,5)}(\frac{n}{3}) - 432\, \biggl(\, W_{(1,5)}(\frac{n}{3}) + 
   W_{(5,9)}(n)\, \biggr). 
\end{align*}
\begin{align*}
R_{(1,16)}(n) =  & 
12\sigma(n) - 36\sigma(\frac{n}{3}) + 12\sigma(\frac{n}{16}) -
36\sigma(\frac{n}{48}) + 144\, W_{(1,16)}(n)  \\ & 
   + 1296\, W_{(1,16)}(\frac{n}{3}) 
   - 432\, \biggl( W_{(3,16)}(n) +  W_{(1,48)}(n) \biggr). 
\end{align*}
\end{corollary}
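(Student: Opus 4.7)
The plan is to derive each identity as a direct specialization of \autoref{representations-theor-c_d} with the particular pair $(c,d)$ substituted, together with a small simplification of any non-coprime convolution sum that appears.

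First, I would verify the admissibility of the three pairs: by \autoref{representation-prop-2} applied with $\alpha\beta=45=3^{2}\cdot 5$ we have $\Delta=3\cdot 5$, hence $\Omega_{3}\supseteq\{(1,15),(3,5)\}$; applied with $\alpha\beta=48=2^{4}\cdot 3$ we get $\Delta=2^{4}$, hence $(1,16)\in\Omega_{3}$. So \autoref{representations-theor-c_d} applies in each case and yields
\begin{equation*}
R_{(c,d)}(n)=12\sigma(\tfrac{n}{c})-36\sigma(\tfrac{n}{3c})+12\sigma(\tfrac{n}{d})-36\sigma(\tfrac{n}{3d})+144\,W_{(c,d)}(n)+1296\,W_{(c,d)}(\tfrac{n}{3})-432\bigl(W_{(3c,d)}(n)+W_{(c,3d)}(n)\bigr).
\end{equation*}

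Next I would deal with those convolution sums on the right-hand side whose parameters are not coprime. For $(c,d)=(1,15)$ we get $W_{(3c,d)}(n)=W_{(3,15)}(n)$, whose parameters share the factor $3$; substituting $l\mapsto l$, $m\mapsto m$ in the defining sum and pulling out the common factor we obtain $W_{(3,15)}(n)=W_{(1,5)}(n/3)$ (the sum is understood to vanish unless $3\mid n$). For $(c,d)=(3,5)$ the same manipulation gives $W_{(c,3d)}(n)=W_{(3,15)}(n)=W_{(1,5)}(n/3)$, while $W_{(3c,d)}(n)=W_{(9,5)}(n)=W_{(5,9)}(n)$ by symmetry of the convolution. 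For $(c,d)=(1,16)$ both pairs $(3c,d)=(3,16)$ and $(c,3d)=(1,48)$ are already coprime, and no such simplification is needed.

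Substituting these simplifications into the general formula yields exactly the three claimed identities. No step is actually delicate here: the whole proof reduces to plugging in and collecting terms. The only thing that requires a moment of care is the rewriting $W_{(3,15)}(n)=W_{(1,5)}(n/3)$, which uses precisely the same $l\mapsto 3l$, $m\mapsto 3m$ substitution argument that was already carried out inside the proof of \autoref{representations-theor-c_d}, so I would simply invoke it rather than redo the index change. For completeness I would also note that, as remarked after \autoref{convolSum-theor-w_1_64}, one may further substitute the closed-form evaluations of $W_{(1,5)}$, $W_{(1,15)}$, $W_{(3,5)}$, $W_{(1,16)}$, $W_{(5,9)}$, $W_{(1,45)}$, $W_{(3,16)}$, $W_{(1,48)}$ coming from \autoref{lemire_williams-cooper_toh}, \autoref{convolSum-theor-w_w_45_50}, and the known evaluation of $W_{(1,16)}$ by \aA\ et al., to obtain fully explicit formulae in terms of divisor sums and eta-quotient coefficients; but this is purely cosmetic.
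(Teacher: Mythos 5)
Your proof is correct and takes essentially the same route as the paper: the paper also obtains all three identities by direct specialization of Theorem~\ref{representations-theor-c_d} to the pairs in $\Omega_{3}$, leaving the reductions $W_{(3,15)}(n)=W_{(1,5)}(\frac{n}{3})$ and $W_{(9,5)}(n)=W_{(5,9)}(n)$ implicit in the phrase ``it follows immediately.'' Your explicit handling of those non-coprime convolution sums simply spells out the step the paper glosses over, so nothing further is needed.
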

\begin{proof} 
It follows immediately from  
\autoref{representations-theor-c_d}. 
We can make use of \autoref{lemire_williams-cooper_toh}, 
\autoref{convolutionSum-w_5_9} and \autoref{convolutionSum-w_1_45} to 
simplify $R_{(1,15)}(n)$ and $R_{(3,5)}(n)$ for example.
\end{proof}

\section{Revisited evaluation of the convolution sums for $\alpha\beta=9,16,18,25,36$}
\label{Revisited-Evaluations}

We revisit the evaluation of the convolution sums for $W_{(1,9)}(n)$, 
$W_{(1,16)}(n)$, $W_{(1,18)}(n)$, $W_{(2,9)}(n)$, $W_{(1,25)}(n)$, $W_{(1,36)}(n)$  
and $W_{(4,9)}(n)$ obtained by \ksW\ \cite{williams2005}, \aA\ et al.\ 
\cite{alaca_alaca_williams2008,alaca_alaca_williams2007},  
\exwX\ et al.\ \cite{xiaetal2014} and \dY\ \cite{ye2015}, respectively. 
These convolution sums have been evaluated using a different technique. 

Due to \autoref{bas-s-eqn-9_45}, using $\EuFrak{B}_{45,1}(q)$ as basis element of 
$\S_{4}(\Gamma_{0}(9))$ and applying the same primitive Dirichlet character as for 
$\E_{4}(\Gamma_{0}(45))$, one easily replicates the result for the 
convolution sum $W_{(1,9}(n)$ obtained by \ksW\ \cite{williams2005}. 

Observe that 
\begin{align*}   
\text{dim}(\E_{4}(\Gamma_{0}(16)))= 6, \quad 
\text{dim}(\S_{4}(\Gamma_{0}(16)))= 3,  \\
\text{dim}(\E_{4}(\Gamma_{0}(25)))=6, \quad    
\text{dim}(\S_{4}(\Gamma_{0}(25)))=5.
\end{align*}
These convolution sums are improved using 
our method since we apply the right number of basis elements of the space of cusp 
forms corresponding to level $16$, $18$ and $25$. 
In case of the evaluation of $W_{(1,16)}(n)$, we will use 
$\EuFrak{B}'_{64,3}(q) = \frac{\eta^{6}(4z)\eta^{4}(16z)}{\eta^{2}(8z)}=
\underset{n\geq 1}{\sum}\EuFrak{b}'_{64,3}(n)q^{n}$ instead 
of $\EuFrak{B}_{64,3}(q)$ given in \autoref{convolutionSums-1_64-table}; 
the primitive Dirichlet character \autoref{base-3_16-kronecker} is applicable.
For the evaluation of $W_{(1,25)}(n)$ we will use 
$\EuFrak{B}'_{50,2}(q) = \eta^{3}(z)\eta^{4}(5z)\eta(25z)=
\underset{n\geq 1}{\sum}\EuFrak{b}'_{50,2}(n)q^{n}$ instead 
of $\EuFrak{B}_{50,2}(q)$ given in \autoref{convolutionSums-2_25-table}; 
we apply the primitive Dirichlet character \autoref{base-3_16-kronecker}.

Since 
\begin{align}
\M_{4}(\Gamma_{0}(6)) \subset \M_{4}(\Gamma_{0}(12)) \subset \M_{4}(\Gamma_{0}(36)) 
 \label{bas-s-eqn-6_36} \\
\M_{4}(\Gamma_{0}(9)) \subset \M_{4}(\Gamma_{0}(18)) \subset \M_{4}(\Gamma_{0}(36)) 
\label{bas-s-eqn-9_36} 
\end{align} 
it suffices to consider the basis of $\S_{4}(\Gamma_{0}(36))$, whose table of 
the exponent of the $\eta$-quotients is given in \autoref{convolutionSums-1_36-table}.
Note that 
\begin{align*}   
\text{dim}(\E_{4}(\Gamma_{0}(18)))= 8, \quad 
\text{dim}(\S_{4}(\Gamma_{0}(18)))= 5,  \\
\text{dim}(\E_{4}(\Gamma_{0}(36)))=12, \quad    
\text{dim}(\S_{4}(\Gamma_{0}(36)))=12.
\end{align*}
The primitive Dirichlet character  
\hyperref[base-5_9-kronecker]{Equation \ref*{base-5_9-kronecker}} 
is applicable in case of $\E_{4}(\Gamma_{0}(18))$ and $\E_{4}(\Gamma_{0}(36))$. 

\begin{corollary} \label{lema_18_36}
It holds that 
\begin{multline}
( L(q) - 9\, L(q^{9}))^{2} 
 = 64 + \sum_{n=1}^{\infty}\biggl(\,
  192\,\sigma_{3}(n)
 - 384\,\sigma_{3}(\frac{n}{3})
+ 15552\,\sigma_{3}(\frac{n}{9})  \\
%    0 
+  192\,\EuFrak{b}_{45,1}(n)
\,\biggr)q^{n}.
\label{convolSum-eqn-1_9}
\end{multline}
\begin{multline}
( L(q) - 16\, L(q^{16}))^{2} 
 = 225 + \sum_{n=1}^{\infty}\biggl(\,
  216\,\sigma_{3}(n) 
  - 72\,\sigma_{3}(\frac{n}{2}) 
 - 288\,\sigma_{3}(\frac{n}{4}) \\
- 1152\,\sigma_{3}(\frac{n}{8}) 
+ 55296\,\sigma_{3}(\frac{n}{16}) 
%    0
 + 504\,\EuFrak{b}_{64,1}(n) 
 + 864\,\EuFrak{b}_{64,2}(n) 
+ 2304\,\EuFrak{b}'_{64,3}(n) 
\,\biggr)q^{n}.
\label{convolSum-eqn-1_16}
\end{multline}
\begin{multline}
( L(q) - 18\, L(q^{18}))^{2} 
 = 289 + \sum_{n=1}^{\infty}\biggl(\,
   \frac{1104}{5}\,\sigma_{3}(n) 
  - \frac{384}{5}\,\sigma_{3}(\frac{n}{2}) 
 - \frac{768}{5}\,\sigma_{3}(\frac{n}{3}) \\
- \frac{3072}{5}\,\sigma_{3}(\frac{n}{6}) 
- \frac{7776}{5}\,\sigma_{3}(\frac{n}{9}) 
+ \frac{357696}{5}\,\sigma_{3}(\frac{n}{18}) 
%   0    
%   0    
+  \frac{2976}{5}\,\EuFrak{b}_{36,1}(n) 
+  \frac{8544}{5}\,\EuFrak{b}_{36,2}(n)   \\
+ \frac{17952}{5}\,\EuFrak{b}_{36,3}(n) 
+ \frac{53376}{5}\,\EuFrak{b}'_{36,4}(n) 
- \frac{52992}{5}\,\EuFrak{b}_{36,5}(n) 
\,\biggr)q^{n},
\label{convolSum-eqn-1_18}
\end{multline}
\begin{multline}
(2\,L(q) - 9\, L(q^{9}))^{2} 
 = 49 + \sum_{n=1}^{\infty}\biggl(\,
 - \frac{96}{5}\,\sigma_{3}(n)
 + \frac{4416}{5}\,\sigma_{3}(\frac{n}{2}) 
 - \frac{768}{5}\,\sigma_{3}(\frac{n}{3})    \\
- \frac{3072}{5}\,\sigma_{3}(\frac{n}{6}) 
+ \frac{89424}{5}\,\sigma_{3}(\frac{n}{9}) 
- \frac{31104}{5}\,\sigma_{3}(\frac{n}{18}) 
%   0    
%   0    
 +  \frac{96}{5}\,\EuFrak{b}_{36,1}(n)
 - \frac{96}{5}\,\EuFrak{b}_{36,2}(n)    \\
 + \frac{3552}{5}\,\EuFrak{b}_{36,3}(n) 
- \frac{4224}{5}\,\EuFrak{b}'_{36,4}(n)       
+  \frac{16128}{5}\,\EuFrak{b}_{36,5}(n)  
\,\biggr)q^{n},
\label{convolSum-eqn-2_9}
\end{multline}
\begin{multline}
( L(q) - 25\, L(q^{25}))^{2} 
 = 576 + \sum_{n=1}^{\infty}\biggl(\,
 \frac{2880}{13}\,\sigma_{3}(n)    
 - \frac{5760}{13}\,\sigma_{3}(\frac{n}{5})      \\
 + \frac{1800000}{13}\,\sigma_{3}(\frac{n}{25})  
%   0   
%   0   
%   0   
 + \frac{12096}{13}\,\EuFrak{b}_{50,1}(n)    
+  5760\,\EuFrak{b}'_{50,2}(n)  
+  17280\,\EuFrak{b}_{50,3}(n)      \\
+  28800\,\EuFrak{b}_{50,4}(n)    
+ \frac{302400}{13}\,\EuFrak{b}_{50,5}(n)   
\,\biggr)q^{n}.
\label{convolSum-eqn-1_25}
\end{multline} 
\begin{multline}
( L(q) - 36\, L(q^{36}))^{2} 
 =  1225 + \sum_{n=1}^{\infty}\biggl(\,
   \frac{1152}{5}\,\sigma_{3}(n) 
   - \frac{1008}{5}\,\sigma_{3}(\frac{n}{2})  
   - \frac{384}{5}\,\sigma_{3}(\frac{n}{3})     \\
   + \frac{13056}{5}\,\sigma_{3}(\frac{n}{4}) 
   - \frac{288}{5}\,\sigma_{3}(\frac{n}{6}) 
   - \frac{3888}{5}\,\sigma_{3}(\frac{n}{9}) 
  - \frac{19968}{5}\,\sigma_{3}(\frac{n}{12})   
  - \frac{11664}{5}\,\sigma_{3}(\frac{n}{18})     \\
  + \frac{1492992}{5}\,\sigma_{3}(\frac{n}{36}) 
%      0    
%      0    
%      0    
  +  \frac{7248}{5}\,\EuFrak{b}_{36,1}(n)   
   + 3744\,\EuFrak{b}_{36,2}(n)    
   + \frac{19008}{5}\,\EuFrak{b}_{36,3}(n)    
   + \frac{77664}{5}\,\EuFrak{b}_{36,4}(n)       \\
   + \frac{14688}{5}\,\EuFrak{b}_{36,5}(n) 
   + 16416\,\EuFrak{b}_{36,6}(n)   
   + \frac{80064}{5}\,\EuFrak{b}_{36,7}(n)    
   + \frac{84672}{5}\,\EuFrak{b}_{36,8}(n)        \\
   - 12960\,\EuFrak{b}_{36,9}(n)  
   + \frac{90624}{5}\,\EuFrak{b}_{36,10}(n)  
   + 5184\,\EuFrak{b}_{36,11}(n)   
   + 2592\,\EuFrak{b}_{36,12}(n)    
\,\biggr)q^{n}.
\label{convolSum-eqn-1_36}
\end{multline}
\begin{multline}
(4\,L(q) - 9\, L(q^{9}))^{2} 
 =  25 + \sum_{n=1}^{\infty}\biggl(\,
   \frac{1152}{5}\,\sigma_{3}(n)      
 - \frac{1008}{5}\,\sigma_{3}(\frac{n}{2})  
  - \frac{384}{5}\,\sigma_{3}(\frac{n}{3})    \\
+ \frac{13056}{5}\,\sigma_{3}(\frac{n}{4})  
+ \frac{80928}{5}\,\sigma_{3}(\frac{n}{6})  
 - \frac{3888}{5}\,\sigma_{3}(\frac{n}{9})  
- \frac{913344}{5}\,\sigma_{3}(\frac{n}{12})  
 - 505872\,\sigma_{3}(\frac{n}{18})            \\
+ \frac{29187648}{5}\,\sigma_{3}(\frac{n}{36})
%    0     
%    0     
%    0     
 + \frac{7248}{5}\,\EuFrak{b}_{36,1}(n)   
 + 3744 \,\EuFrak{b}_{36,2}(n)          
+ \frac{19008}{5}\,\EuFrak{b}_{36,3}(n)   
+ \frac{77664}{5}\,\EuFrak{b}_{36,4}(n)      \\
+ \frac{14688}{5}\,\EuFrak{b}_{36,5}(n)           
+ \frac{864}{5}\,\EuFrak{b}_{36,6}(n)             
+ \frac{80064}{5}\,\EuFrak{b}_{36,7}(n)  
+ \frac{84672}{5}\,\EuFrak{b}_{36,8}(n)    
- 12960\,\EuFrak{b}_{36,9}(n)               \\   
+ \frac{90624}{5}\,\EuFrak{b}_{36,10}(n)      
+ 5184\,\EuFrak{b}_{36,11}(n)     
+ 2592\,\EuFrak{b}_{36,12}(n)   
\,\biggr)q^{n}.
\label{convolSum-eqn-4_9}
\end{multline}
\end{corollary}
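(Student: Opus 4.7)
The plan is to derive each of the seven identities as a direct application of \hyperref[convolution-lemma_a_b]{Lemma \ref*{convolution-lemma_a_b}} to the appropriate level $\alpha\beta \in \{9,16,18,25,36\}$. By \hyperref[evalConvolClass-lema-1]{Lemma \ref*{evalConvolClass-lema-1}}, each of the seven expressions $(\alpha\,L(q^{\alpha})-\beta\,L(q^{\beta}))^{2}$ lies in $\M_{4}(\Gamma_{0}(\alpha\beta))$, so the theorem reduces to computing its expansion in a chosen basis and verifying that the stated rational coefficients satisfy the resulting linear system.

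First I would fix a basis of $\M_{4}(\Gamma_{0}(\alpha\beta))=\E_{4}(\Gamma_{0}(\alpha\beta))\oplus\S_{4}(\Gamma_{0}(\alpha\beta))$ for each level. For the Eisenstein part, I would use the construction of \autoref{basisCusp_a_b}\,(a), choosing the Kronecker character $\legendre{-4}{n}$ for levels $9,18,36$ (inherited from the inclusions \autoref{bas-s-eqn-9_45} and \autoref{bas-s-eqn-9_36}) and $\legendre{-3}{n}$ for levels $16,25$ (so that the conductor divides the level, making $C_{0}=0$ via \autoref{Eisenstein-gen}). For the cuspidal part, I would reuse the $\eta$-quotient bases already validated in \autoref{basisCusp5_9}: the inclusion \autoref{bas-s-eqn-9_45} allows $\EuFrak{B}_{45,1}$ to serve as the sole cusp basis element for level $9$; the inclusions \autoref{bas-s-eqn-6_36} and \autoref{bas-s-eqn-9_36} let the level-$36$ cusp basis (\autoref{convolutionSums-1_36-table}) cover levels $18$ and $36$; for levels $16$ and $25$ I would take the modified $\eta$-quotients $\EuFrak{B}'_{64,3}$ and $\EuFrak{B}'_{50,2}$ as indicated in the section preamble to make the triangular-leading-term argument in \autoref{basisCusp_a_b}\,(b) go through.

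Once the bases are in place, I would write
\[
(\alpha\,L(q^{\alpha})-\beta\,L(q^{\beta}))^{2}=\sum_{\delta\mid \alpha\beta}X_{\delta}\,M(q^{\delta})+\sum_{s\in D(\chi)}Z_{s}\,M_{\chi}(q^{s})+\sum_{j}Y_{j}\,\EuFrak{B}_{\alpha\beta,j}(q),
\]
expand the left side via \hyperref[convolutionSum_a_b]{Theorem \ref*{convolutionSum_a_b}}, and equate the two $q$-series coefficient by coefficient. Taking $n$ over $D(\alpha\beta)$ together with enough additional small integers (up to $\dim\M_{4}$) gives a square linear system; its determinant is nonzero by the linear independence established in \autoref{basisCusp_a_b}, so the coefficients $X_{\delta},Z_{s},Y_{j}$ are uniquely determined. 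Substituting these values into the Fourier expansion and collecting by $\sigma_{3}$- and $\EuFrak{b}$-terms reproduces the seven displayed identities \autoref{convolSum-eqn-1_9}--\autoref{convolSum-eqn-4_9}.

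The main obstacle is purely computational rather than conceptual: the level $36$ case requires solving a $24\times 24$ linear system (since $\dim \M_{4}(\Gamma_{0}(36))=24$), and the basis eta-quotient coefficients $\EuFrak{b}_{36,j}(n)$ must be computed to sufficient precision so that the resulting system is consistent and uniquely solvable. A secondary subtlety is ensuring that for levels $16$ and $25$ the replacement cusp forms $\EuFrak{B}'_{64,3}$ and $\EuFrak{B}'_{50,2}$ indeed lie in $\S_{4}$ of the smaller level (verified by rechecking conditions (i)--(v$'$) of \autoref{ligozat_theorem} with $N=16,25$ rather than $64,50$) and that their leading $q$-degrees yield a triangular coefficient matrix on the cuspidal block. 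Once these verifications are made, the remaining arithmetic is mechanical and is carried out with the symbolic computation packages mentioned in the introduction.
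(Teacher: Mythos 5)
Your proposal is correct and follows essentially the same route as the paper: the paper's proof of this corollary simply refers to the proof of \autoref{lema_45_50}, which is exactly the argument you describe --- place $(\alpha L(q^{\alpha})-\beta L(q^{\beta}))^{2}$ in $\M_{4}(\Gamma_{0}(\alpha\beta))$ via \hyperref[evalConvolClass-lema-1]{Lemma \ref*{evalConvolClass-lema-1}}, expand it in the Eisenstein-plus-cusp basis assembled in Section~\ref{Revisited-Evaluations} (with the character and $\eta$-quotient choices you list), equate coefficients with \autoref{evalConvolClass-eqn-11} at enough values of $n$, and solve the resulting linear system symbolically. Your added cautions about rechecking Ligozat's conditions at the smaller levels and about the invertibility of the particular square system are sensible refinements of, not departures from, the paper's argument.
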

\begin{proof} Similar to that of \autoref{lema_45_50}.
\end{proof}

\begin{corollary} \label{convolSum-theor-w_18_36}
Let $n$ be a positive integer. Then 
\begin{align}
 W_{(1,9}(n)  = & 
 \frac{1}{216}\,\sigma_{3}(n)
+ \frac{1}{27}\,\sigma_{3}(\frac{n}{3}) 
+ \frac{3}{8}\,\sigma_{3}(\frac{n}{9}) 
%  0  
 - \frac{1}{54}\,\EuFrak{b}_{45,1}(n)
\label{convolutionSum-w_1_9}
\end{align}
\begin{align}
 W_{(1,16}(n)  = & 
\frac{1}{768}\,\sigma_{3}(n)     
+\frac{1}{256}\,\sigma_{3}(\frac{n}{2})     
+\frac{1}{64}\,\sigma_{3}(\frac{n}{4})     
+\frac{1}{16}\,\sigma_{3}(\frac{n}{8})   
 +\frac{1}{3}\,\sigma_{3}(\frac{n}{16})      \notag \\ &
%  0      
    + (\frac{1}{24}-\frac{1}{64}n)\sigma(n) 
   + (\frac{1}{24}-\frac{1}{4}n)\sigma(\frac{n}{16})  
 -\frac{7}{256}\,\EuFrak{b}_{64,1}(n)   \notag \\ &  
 -\frac{3}{64}\,\EuFrak{b}_{64,3}(n)  
-\frac{1}{8} \,\EuFrak{b}'_{64,3}(n)
\label{convolutionSum-w_1_16}
\end{align}
\begin{align}
 W_{(1,18}(n)  = & 
\frac{1}{1080}\,\sigma_{3}(n)     
+\frac{1}{270}\,\sigma_{3}(\frac{n}{2})     
+\frac{1}{135}\,\sigma_{3}(\frac{n}{3})     
+\frac{4}{135}\,\sigma_{3}(\frac{n}{6})   
 +\frac{3}{40}\,\sigma_{3}(\frac{n}{9})      \notag \\ &
  +\frac{3}{10}\,\sigma_{3}(\frac{n}{18})    
%  0      
%  0      
    + (\frac{1}{24}-\frac{1}{72}n)\sigma(n) 
   + (\frac{1}{24}-\frac{1}{4}n)\sigma(\frac{n}{18})  
 - \frac{31}{1080}\,\EuFrak{b}_{36,1}(n)   \notag \\ &  
 - \frac{89}{1080}\,\EuFrak{b}_{36,2}(n)  
- \frac{187}{1080} \,\EuFrak{b}_{36,3}(n)
  - \frac{139}{270}\,\EuFrak{b}'_{36,4}(n)  
 + \frac{23}{45} \,\EuFrak{b}_{36,5}(n) 
\label{convolutionSum-w_1_18}
\end{align}
\begin{align}
 W_{(2,9}(n)  = & 
  \frac{1}{1080}\,\sigma_{3}(n)
 + \frac{1}{270}\,\sigma_{3}(\frac{n}{2})
 + \frac{1}{135}\,\sigma_{3}(\frac{n}{3})     
 + \frac{4}{135}\,\sigma_{3}(\frac{n}{6}) 
  + \frac{3}{40}\,\sigma_{3}(\frac{n}{9})      \notag \\ &
  + \frac{3}{10}\,\sigma_{3}(\frac{n}{18}) 
%   0   
%   0   
    + (\frac{1}{24}-\frac{1}{36}n)\sigma(\frac{n}{2}) 
   + (\frac{1}{24}-\frac{1}{8}n)\sigma(\frac{n}{9})  
 - \frac{1}{1080}\,\EuFrak{b}_{36,1}(n)   \notag \\ & 
 + \frac{1}{1080}\,\EuFrak{b}_{36,2}(n)
  - \frac{37}{1080} \,\EuFrak{b}_{36,3}(n)
 + \frac{11}{270}\,\EuFrak{b}'_{36,4}(n)   
  - \frac{7}{45}\,\EuFrak{b}_{36,5}(n) 
\label{convolutionSum-w_2_9}
\end{align}
\begin{align}
 W_{(1,25)}(n)  = & 
 \frac{1}{1560}\,\sigma_{3}(n)   
  + \frac{1}{65}\,\sigma_{3}(\frac{n}{5}) 
 + \frac{125}{312}\,\sigma_{3}(\frac{n}{25})  
%   0   
%   0   
%   0   
    + (\frac{1}{24}-\frac{1}{100}n)\sigma(n) \notag \\ &
   + (\frac{1}{24}-\frac{1}{4}n)\sigma(\frac{n}{25})    
  - \frac{21}{650}\,\EuFrak{b}_{50,1}(n)   
  - \frac{1}{5}\,\EuFrak{b}'_{50,2}(n)  
  - \frac{3}{5}\,\EuFrak{b}_{50,3}(n)     \notag \\ &
   - \,\EuFrak{b}_{50,4}(n)  
  - \frac{21}{26}\,\EuFrak{b}_{50,5}(n)    
\label{convolutionSum-w_1_25}
\end{align}
\begin{align}
 W_{(1,36}(n)  = & 
    \frac{1}{4320}\,\sigma_{3}(n) 
   + \frac{7}{1440}\,\sigma_{3}(\frac{n}{2}) 
   + \frac{1}{540}\,\sigma_{3}(\frac{n}{3}) 
    - \frac{17}{270}\,\sigma_{3}(\frac{n}{4}) 
   + \frac{1}{720}\,\sigma_{3}(\frac{n}{6})    \notag \\ &
   + \frac{3}{160}\,\sigma_{3}(\frac{n}{9}) 
    + \frac{13}{135}\,\sigma_{3}(\frac{n}{12}) 
   + \frac{9}{160}\,\sigma_{3}(\frac{n}{18})   
    + \frac{3}{10}\,\sigma_{3}(\frac{n}{36})     \notag \\ &
%     0   
%     0   
%     0   
    + (\frac{1}{24}-\frac{1}{144}n)\sigma(n) 
   + (\frac{1}{24}-\frac{1}{4}n)\sigma(\frac{n}{36})  
    - \frac{151}{4320}\,\EuFrak{b}_{36,1}(n)     
    - \frac{13}{144}\,\EuFrak{b}_{36,2}(n)    \notag \\ &
    - \frac{11}{120}\,\EuFrak{b}_{36,3}(n)
    - \frac{809}{2160}\,\EuFrak{b}_{36,4}(n)
    - \frac{17}{240}\,\EuFrak{b}_{36,5}(n)
    - \frac{19}{48}\,\EuFrak{b}_{36,6}(n)      
    - \frac{139}{360}\,\EuFrak{b}_{36,7}(n)   \notag \\ &
    - \frac{49}{120}\,\EuFrak{b}_{36,8}(n)
    + \frac{5}{16}\,\EuFrak{b}_{36,9}(n)
    - \frac{59}{135}\,\EuFrak{b}_{36,10}(n)
    - \frac{1}{8}\,\EuFrak{b}_{36,11}(n)
     - \frac{1}{16} \,\EuFrak{b}_{36,12}(n)
\label{convolutionSum-w_1_36}
\end{align}
\begin{align}
 W_{(4,9}(n)  = & 
   \frac{1}{4320}\,\sigma_{3}(n)         
  + \frac{7}{1440}\,\sigma_{3}(\frac{n}{2}) 
  + \frac{1}{540}\,\sigma_{3}(\frac{n}{3})  
  - \frac{17}{270}\,\sigma_{3}(\frac{n}{4})  
  - \frac{281}{720}\,\sigma_{3}(\frac{n}{6})     \notag \\ &
  + \frac{3}{160}\,\sigma_{3}(\frac{n}{9})  
  +  \frac{4757}{1080}\,\sigma_{3}(\frac{n}{12})   
  +  \frac{1171}{96}\,\sigma_{3}(\frac{n}{18})    
  - \frac{15991}{120}\,\sigma_{3}(\frac{n}{36})    \notag \\ & 
%    0    
%    0   
%    0  
    + (\frac{1}{24}-\frac{1}{36}n)\sigma(\frac{n}{4}) 
   + (\frac{1}{24}-\frac{1}{16}n)\sigma(\frac{n}{9})   
   - \frac{151}{4320}\,\EuFrak{b}_{36,1}(n)
   - \frac{13}{144}\,\EuFrak{b}_{36,2}(n)    \notag \\ &
   - \frac{11}{120}\,\EuFrak{b}_{36,3}(n)
   - \frac{809}{2160}\,\EuFrak{b}_{36,4}(n)
   - \frac{17}{240}\,\EuFrak{b}_{36,5}(n)   
   - \frac{1}{240}\,\EuFrak{b}_{36,6}(n)
   - \frac{139}{360}\,\EuFrak{b}_{36,7}(n)       \notag \\ &
   - \frac{49}{120}\,\EuFrak{b}_{36,8}(n)
   + \frac{5}{16}\,\EuFrak{b}_{36,9}(n)
   - \frac{59}{135}\,\EuFrak{b}_{36,10}(n)    
   - \frac{1}{8}\,\EuFrak{b}_{36,11}(n)
   - \frac{1}{16}\,\EuFrak{b}_{36,12}(n)  
\label{convolutionSum-w_4_9}
\end{align}
\end{corollary}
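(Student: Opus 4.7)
The plan is to derive each of the seven identities as an immediate specialization of \autoref{convolution_a_b}, fed by the explicit $q$-expansions of $(\alpha L(q^{\alpha})-\beta L(q^{\beta}))^{2}$ supplied by \autoref{lema_18_36}. Concretely, for each pair $(\alpha,\beta)\in\{(1,9),(1,16),(1,18),(2,9),(1,25),(1,36),(4,9)\}$, the coefficients $X_{\delta}$, $Z(\chi)_{s}$, $Y_{j}$ appearing in \autoref{convolution_a_b} are exactly the coefficients read off the right-hand sides of \autoref{convolSum-eqn-1_9}--\autoref{convolSum-eqn-4_9}; substituting them into the formula of \autoref{convolution_a_b} and dividing by $1152\,\alpha\beta$ yields the stated expressions, after collecting the universal $\sigma$-terms $(\tfrac{1}{24}-\tfrac{n}{4\beta})\sigma(\tfrac{n}{\alpha})+(\tfrac{1}{24}-\tfrac{n}{4\alpha})\sigma(\tfrac{n}{\beta})$.

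In more detail, I would proceed case by case in the order $W_{(1,9)}$, $W_{(1,16)}$, $W_{(1,25)}$, $W_{(1,18)}$, $W_{(2,9)}$, $W_{(1,36)}$, $W_{(4,9)}$, handling the smaller-level cases first so the needed bases can be reused. For each case I would first invoke the inclusions $\M_{4}(\Gamma_{0}(N'))\subset\M_{4}(\Gamma_{0}(N))$ recorded in \autoref{bas-s-eqn-9_45}, \autoref{bas-s-eqn-9_36}, \autoref{bas-s-eqn-6_36}, and the new inclusion $\M_{4}(\Gamma_{0}(16))\subset\M_{4}(\Gamma_{0}(64))$ implicit in the use of $\EuFrak{B}'_{64,3}$, to justify that the specific cusp-form basis elements $\EuFrak{B}_{45,1}$, $\EuFrak{B}_{64,1},\EuFrak{B}_{64,2},\EuFrak{B}'_{64,3}$, $\EuFrak{B}'_{50,2},\EuFrak{B}_{50,j}$ and $\EuFrak{B}_{36,j}$ are indeed basis elements at the smaller level; then I would verify via \autoref{ligozat_theorem}(i)--(v$'$) that each is a cusp form of weight $4$ on the corresponding $\Gamma_{0}(N')$ and check linear independence by comparing $q$-expansions in the same manner as in the proof of \autoref{basisCusp5_9}.

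With the bases fixed, each identity reduces to an analogue of \autoref{convolution_a_b-eqn-0}: expand $(\alpha L(q^{\alpha})-\beta L(q^{\beta}))^{2}$ as a linear combination of Eisenstein series $M(q^{t})$, twisted Eisenstein series $M_{\chi}(q^{s})$ (using the Dirichlet character \autoref{base-5_9-kronecker} for the levels $9,18,36$ and \autoref{base-3_16-kronecker} for $16$ and $25$), and the selected cusp eta-quotients. Equating with \autoref{evalConvolClass-eqn-11} and comparing coefficients of $q^{n}$ for sufficiently many $n$ produces a square linear system, whose unique solution matches the coefficients displayed in \autoref{lema_18_36}. Dividing by $1152\,\alpha\beta$ and isolating $W_{(\alpha,\beta)}(n)$ then delivers the formulas for $W_{(1,9)}(n)$ through $W_{(4,9)}(n)$ exactly as stated.

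The main obstacle is essentially bookkeeping rather than conceptual: one must verify that the chosen Dirichlet-twisted Eisenstein series at each level, together with the explicit eta-quotients, indeed span the relevant $\M_{4}(\Gamma_{0}(N'))$ and agree in dimension with the formulas $\dim\E_{4}=8,6,6,12$ and $\dim\S_{4}=5,3,5,12$ announced before the corollary. Special care is required for $W_{(1,18)}$, $W_{(2,9)}$, $W_{(1,36)}$, $W_{(4,9)}$ at level $36$, where the twelve cusp-form coefficients must be pinned down simultaneously, and for the substitutions $\EuFrak{B}_{64,3}\leadsto\EuFrak{B}'_{64,3}$ and $\EuFrak{B}_{50,2}\leadsto\EuFrak{B}'_{50,2}$, which must be checked to remain linearly independent with the other basis elements so that \autoref{basisCusp_a_b}(b) still applies. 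Once these verifications are in place, the passage from \autoref{lema_18_36} to the closed-form expressions in \autoref{convolSum-theor-w_18_36} is purely mechanical via \autoref{convolution_a_b}.
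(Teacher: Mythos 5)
Your proposal matches the paper's own argument: the paper proves this corollary by declaring it ``similar to'' Corollary \ref{convolSum-theor-w_45_50}, which in turn follows immediately from \autoref{convolution_a_b} once the coefficients $X_{\delta}$, $Z(\chi)_{s}$, $Y_{j}$ are read off from the expansions in \autoref{lema_18_36} and one divides by $1152\,\alpha\beta$. Your additional care about the modified basis elements $\EuFrak{B}'_{64,3}$, $\EuFrak{B}'_{50,2}$ and the level inclusions is exactly the bookkeeping the paper carries out in the preamble to the corollary, so the route is essentially identical.
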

\begin{proof} Similar to that of \autoref{convolSum-theor-w_45_50}.
\end{proof}

%%%%%%%%%%%  Comclusion  %%%%%%%%%%%%

\section{Concluding Remark} \label{conclusion}

The set of natural number $\mathbb{N}$ can be expressed as the disjoint 
union of the sets $\N$ and $\mathbb{N}\setminus\N$.
When assuming that a basis of the space of cusp forms is determined, 
\eN\ \cite{ntienjem2016c} has evaluated convolution sums for natural 
numbers which belong to $\N$. In this paper we have evaluated convolution 
sums for natural numbers which are in $\mathbb{N}\setminus\N$ making the 
same assumption. 
When we put altogether, we can say that for all natural numbers $\alpha$ 
and $\beta$, the convolution sums for $\alpha\beta$ are evaluated. 

The determination of a basis of the space of cusp forms is tedious, 
especially when $\alpha\beta$ is large and has a large number of divisors. 
An effective and efficient approach to build a basis of the space of 
cusp forms of weight $4$ for $\Gamma_{0}(\alpha\beta)$ is a work in progress.

%%% Local Variables:
%%% mode: latex
%%% TeX-master: t
%%% End:

% \subsubsection*{Acknowledgments}  
% I express my gratitude to Prof. Dr. Emeritus Kenneth S. Williams for 
% fruitful comments and suggestions on a draft of this paper.

%\section*{References}  

%% If you have bibdatabase file and want bibtex to generate the
%% bibitems, please use
%%
%%  \bibliography{<your bibdatabase>}
% \bibliographystyle{abbrv} 
% \bibliography{../version1/abbreviations,../version1/convolutionSums}

%%%%%%%%%%%%%%%%%%%%%%%%%%%%%%%%%%%
%%                               %%
%% Figures                       %%
%%                               %%
%% NB: this is for captions and  %%
%% Titles. All graphics must be  %%
%% submitted separately and NOT  %%
%% included in the Tex document  %%
%%                               %%
%%%%%%%%%%%%%%%%%%%%%%%%%%%%%%%%%%%

%%
%% Do not use \listoffigures as most will be included as separate files

 \section*{Figures}

   \begin{figure}[ht!]
   \caption{Inclusion relationship of the modular space of weight $4$ for 
   $\Gamma_{0}(45)$ and $\Gamma_{0}(50)$}\label{bas-s-45-50} 
~~\\ 
%[6mm]
 
\setlength{\unitlength}{4144sp}%
\begingroup\makeatletter\ifx\SetFigFont\undefined%
\gdef\SetFigFont#1#2#3#4#5{%
  \reset@font\fontsize{#1}{#2pt}%
  \fontfamily{#3}\fontseries{#4}\fontshape{#5}%
  \selectfont}%
\fi\endgroup%
\begin{picture}(2229,2049)(1159,-2098)
\thinlines
{\put(1431,-429){\framebox(185,368){}}
}%
{\put(1171,-1165){\framebox(185,368){}}
}%
{\put(1708,-2086){\framebox(186,368){}}
}%
{\put(1708,-1165){\framebox(186,368){}}
}%
{\put(2820,-429){\framebox(185,368){}}
}%
{\put(2449,-1165){\framebox(185,368){}}
}%
{\put(3191,-1165){\framebox(185,368){}}
}%
{\put(2820,-1902){\framebox(185,368){}}
}%
{\multiput(1449,-429)(-1.95870,-7.83479){48}{\makebox(1.5875,11.1125){\tiny.}}
}%
{\multiput(1616,-429)(2.53125,-7.59375){49}{\makebox(1.5875,11.1125){\tiny.}}
}%
{\put(1801,-1165){\line( 0, 1){ 36}}
\put(1801,-1129){\line( 0,-1){ 36}}
\put(1801,-1165){\line( 0,-1){ 37}}
\put(1801,-1202){\line( 0,-1){ 37}}
\put(1801,-1239){\line( 0,-1){ 37}}
\put(1801,-1276){\line( 0,-1){ 37}}
\put(1801,-1313){\line( 0,-1){ 37}}
\put(1801,-1350){\line( 0,-1){ 37}}
\put(1801,-1387){\line( 0,-1){ 36}}
\put(1801,-1423){\line( 0,-1){ 37}}
\put(1801,-1460){\line( 0,-1){ 37}}
\put(1801,-1497){\line( 0,-1){ 37}}
\put(1801,-1534){\line( 0,-1){ 36}}
\put(1801,-1570){\line( 0,-1){ 37}}
\put(1801,-1607){\line( 0,-1){ 37}}
\put(1801,-1644){\line( 0,-1){ 37}}
\put(1801,-1681){\line( 0,-1){ 37}}
}%
{\multiput(2820,-429)(-3.61569,-7.23137){52}{\makebox(1.5875,11.1125){\tiny.}}
}%
{\multiput(3005,-466)(4.15457,-6.92428){48}{\makebox(1.5875,11.1125){\tiny.}}
}%
{\put(3209,-1165){\line(-3,-5){216.794}}
}%
{\multiput(2634,-1165)(3.55385,-7.10769){53}{\makebox(1.5875,11.1125){\tiny.}}
}%
\put(1783,-1975){\makebox(0,0)[lb]{\smash{{\SetFigFont{5}{6.0}{\rmdefault}{\mddefault}{\updefault}{$5$}%
}}}}
\put(2894,-1755){\makebox(0,0)[lb]{\smash{{\SetFigFont{5}{6.0}{\rmdefault}{\mddefault}{\updefault}{$5$}%
}}}}
\put(1245,-982){\makebox(0,0)[lb]{\smash{{\SetFigFont{5}{6.0}{\rmdefault}{\mddefault}{\updefault}{$9$}%
}}}}
\put(1764,-1055){\makebox(0,0)[lb]{\smash{{\SetFigFont{5}{6.0}{\rmdefault}{\mddefault}{\updefault}{$15$}%
}}}}
\put(1486,-355){\makebox(0,0)[lb]{\smash{{\SetFigFont{5}{6.0}{\rmdefault}{\mddefault}{\updefault}{$45$}%
}}}}
\put(2875,-319){\makebox(0,0)[lb]{\smash{{\SetFigFont{5}{6.0}{\rmdefault}{\mddefault}{\updefault}{$50$}%
}}}}
\put(2505,-1055){\makebox(0,0)[lb]{\smash{{\SetFigFont{5}{6.0}{\rmdefault}{\mddefault}{\updefault}{$10$}%
}}}}
\put(3246,-1055){\makebox(0,0)[lb]{\smash{{\SetFigFont{5}{6.0}{\rmdefault}{\mddefault}{\updefault}{$25$}%
}}}}
\end{picture}%
 
\end{figure}

\begin{figure}[ht!]
\caption{Inclusion relation of the modular space of weight $4$ for 
$\Gamma_{0}(48)$.}
\label{bas-s-48} 
~~\\

\setlength{\unitlength}{4144sp}%
\begingroup\makeatletter\ifx\SetFigFont\undefined%
\gdef\SetFigFont#1#2#3#4#5{%
  \reset@font\fontsize{#1}{#2pt}%
  \fontfamily{#3}\fontseries{#4}\fontshape{#5}%
  \selectfont}%
\fi\endgroup%
\begin{picture}(1914,2049)(1114,-1648)
\thinlines
{\put(1666,119){\framebox(270,270){}}
}%
{\put(1126,-421){\framebox(270,270){}}
}%
{\put(2206,-421){\framebox(270,270){}}
}%
{\put(1666,-961){\framebox(270,270){}}
}%
{\put(2746,-961){\framebox(270,270){}}
}%
{\put(2746,-1636){\framebox(270,270){}}
}%
{\put(1694,119){\line(-6,-5){308.656}}
}%
{\put(1396,-151){\makebox(1.5875,11.1125){\tiny.}}
}%
{\put(1936,119){\line( 1,-1){270}}
}%
{\put(2206,-449){\line(-6,-5){278.361}}
}%
{\put(1396,-449){\line( 6,-5){278.361}}
}%
{\put(1450,-1284){\makebox(1.5875,11.1125){\tiny.}}
}%
{\put(1450,-1284){\line( 0, 1){ 26}}
}%
{\put(1450,-1258){\makebox(1.5875,11.1125){\tiny.}}
}%
{\put(2476,-421){\line( 1,-1){270}}
}%
{\put(2882,-961){\line( 0,-1){405}}
}%
\put(1774,227){\makebox(0,0)[lb]{\smash{{\SetFigFont{7}{8.4}{\rmdefault}{\mddefault}{\updefault}{$48$}%
}}}}
\put(1234,-367){\makebox(0,0)[lb]{\smash{{\SetFigFont{7}{8.4}{\rmdefault}{\mddefault}{\updefault}{$16$}%
}}}}
\put(2288,-340){\makebox(0,0)[lb]{\smash{{\SetFigFont{7}{8.4}{\rmdefault}{\mddefault}{\updefault}{$24$}%
}}}}
\put(1774,-880){\makebox(0,0)[lb]{\smash{{\SetFigFont{7}{8.4}{\rmdefault}{\mddefault}{\updefault}{$8$}%
}}}}
\put(2828,-853){\makebox(0,0)[lb]{\smash{{\SetFigFont{7}{8.4}{\rmdefault}{\mddefault}{\updefault}{$12$}%
}}}}
\put(2854,-1528){\makebox(0,0)[lb]{\smash{{\SetFigFont{7}{8.4}{\rmdefault}{\mddefault}{\updefault}{$6$}%
}}}}
\end{picture}
\end{figure}

% \begin{figure}[h!]
%   \caption{\csentence{Sample figure title.}
%       Figure legend text.}
%       \end{figure}

%%%%%%%%%%%%%%%%%%%%%%%%%%%%%%%%%%%
%%                               %%
%% Tables                        %%
%%                               %%
%%%%%%%%%%%%%%%%%%%%%%%%%%%%%%%%%%%

%% Use of \listoftables is discouraged.
%%

\section*{Tables}

\begin{longtable}{|c|cccccc|} \hline
   & \textbf{1}  &  \textbf{3}  & \textbf{5} & \textbf{9} & \textbf{15}  &
   \textbf{45}  \\ \hline  
\textbf{1}  &  0 & 8 & 0 & 0 & 0 & 0 \\
\textbf{2}  &  2 & 2 & 2 & 0 & 2 & 0 \\
\textbf{3}  &  0 & 4 & 0 & 0 & 4 & 0 \\
\textbf{4}  &  3 & 4 & 0 & -1 & 0 & 2 \\
\textbf{5}  &  2 & 0 & 2 & 2 & 0 & 2 \\
\textbf{6}  &  4 & 0 & 1 & 0 & 0 & 3 \\
\textbf{7}  &  1 & 0 & 1 & 3 & 0 & 3 \\
\textbf{8}  &  3 & 0 & 0 & 1 & 0 & 4 \\
\textbf{9}  &  5 & 0 & -1 & -1 & 0 & 5 \\
\textbf{10}  &  0 & 3 & 0 & -1 & 1 & 5  \\
\textbf{11}  &  1 & 1 & 1 & 0 & -1 & 6  \\
\textbf{12}  &  4 & 0 & 4 & 0 & 0 & 0   \\
\textbf{13}  &  2 & 0 & 2 & 0 & 4 & 0   \\
\textbf{14}  &  0 & -1 & 3 & 9 & -3 & 0   \\ \hline
\caption{Power of $\eta$-quotients being basis elements for $S_{4}(\Gamma_{0}(45))$}
\label{convolutionSums-5_9-table}
\end{longtable}

\begin{longtable}{|c|cccccccccc|} \hline
   & \textbf{1}  &  \textbf{2}  & \textbf{3} & \textbf{4} & \textbf{6} & \textbf{8} & 
   \textbf{12} & \textbf{16} & \textbf{24} & \textbf{48}  \\ \hline
\textbf{1}  &   0  &  4 & 0 & 4 & 0 & 0 & 0 & 0 & 0 & 0  \\ \hline
\textbf{2}  &   0 & 0 & 0 & 4 & 0 & 4 & 0 & 0 & 0 & 0  \\ \hline 
\textbf{3}  &   0 & 0 & 0 & 0 & 4 & 0 & 4 & 0 & 0 & 0  \\ \hline
\textbf{4}  &   0 & 0 & 0 & 2 & 0 & 2 & 2 & 0 & 2 & 0  \\ \hline
\textbf{5}  &   0 & 2 & 0 & -2 & -2 & 2 & 6 & 0 & 2 & 0  \\ \hline
\textbf{6}  &   0 & 0 & 0 & 0 & 0 & 0 & 4 & 0 & 4 & 0  \\ \hline
\textbf{7}  &   0 & 2 & 0 & 0 & -2 & -2 & 4 & 0 & 6 & 0  \\ \hline
\textbf{8}  &   0 & 0 & 0 & 0 & 0 & 2 & 0 & 2 & 2 & 2  \\ \hline
\textbf{9}  &   0 & 0 & 0 & 0 & 0 & 0 & 6 & 0 & -2 & 4  \\ \hline
\textbf{10}  &   0 & 0 & 0 & 0 & 0 & 1 & 4 & 1 & -3 & 5  \\ \hline
\textbf{11}  &   0 & 0 & 0 & 0 & 0 & 3 & 2 & -3 & 1 & 5  \\ \hline
\textbf{12}  &   0 & 0 & 0 & 0 & 0 & 4 & 0 & -2 & 0 & 6  \\ \hline
\textbf{13}  &   0 & 0 & 0 & -1 & 0 & 7 & 1 & -4 & -3 & 8  \\ \hline
\textbf{14}  &   0 & 0 & 0 & 0 & 0 & 3 & 4 & -3 & -5 & 9  \\ \hline
\textbf{15}  &   0 & 0 & 0 & 0 & 0 & 4 & 2 & -2 & -6 & 10  \\ \hline 
\textbf{16}  &   0 & -1 & 0 & -1 & 3 & 3 & -1 & -3 & -1 & 9  \\ \hline
\textbf{17}  &   0 & 0 & 0 & 2 & 0 & 0 & 0 & -2 & -2 & 10  \\ \hline
\textbf{18}  &   3 & 0 & 1 & 0 & -1 & 0 & 0 & 3 & 5 & -3  \\ \hline
\caption{Power of $\eta$-functions being basis elements of $\S_{4}(\Gamma_{0}(48))$}
\label{convolutionSums-3_16-table}
\end{longtable}

\begin{longtable}{|c|cccccc|} \hline
   & \textbf{1}  &  \textbf{2}  & \textbf{5} & \textbf{10} & \textbf{25}  &
   \textbf{50}  \\  \hline
\textbf{1}  & 4 & 0 & 4 & 0 & 0 & 0 \\
\textbf{2}  & 0 & 4 & 0 & 4 & 0 & 0 \\
\textbf{3}  & 2 & 0 & 4 & 0 & 2 & 0 \\
\textbf{4}  & 1 & 0 & 4 & 0 & 3 & 0 \\
\textbf{5}  & 0 & 0 & 4 & 0 & 4 & 0 \\
\textbf{6}  & 0 & 2 & 0 & 4 & 0 & 2 \\
\textbf{7}  & 0 & 4 & 2 & 0 & -2 & 4 \\
\textbf{8}  & 0 & 1 & 0 & 4 & 0 & 3  \\
\textbf{9}  & 1 & 0 & 0 & 4 & -1 & 4 \\
\textbf{10}  & 0 & 0 & 0 & 4 & 0 & 4 \\
\textbf{11}  & 0 & 2 & 2 & 0 & -2 & 6 \\
\textbf{12}  & 0 & -1 & 0 & 4 & 0 & 5 \\
\textbf{13}  & 0 & 1 & 2 & 0 & -2 & 7 \\
\textbf{14}  & 1 & 0 & 2 & 0 & -3 & 8 \\
\textbf{15}  & 0 & 0 & 2 & 0 & -2 & 8 \\
\textbf{16}  & -1 & 0 & 6 & -2 & -5 & 10 \\
\textbf{17}  & 0 & -1 & 1 & 3 & -5 & 10 \\ \hline
\caption{Power of $\eta$-quotients being basis elements for $S_{4}(\Gamma_{0}(50))$}
\label{convolutionSums-2_25-table}
\end{longtable}

\begin{longtable}{|c|ccccccc|} \hline
   & \textbf{1}  &  \textbf{2}  & \textbf{4} & \textbf{8} & \textbf{16} & \textbf{32}  
   & \textbf{64} \\ \hline
\textbf{1}  & 0 & 4 & 4 & 0 & 0 & 0 & 0  \\ \hline
\textbf{2}  & 0 & 0 & 4 & 4 & 0 & 0 & 0  \\ \hline
\textbf{3}  & 0 & 4 & 0 & 0 & 4 & 0 & 0  \\ \hline
\textbf{4}  & 0 & 0 & 0 & 4 & 4 & 0 & 0  \\ \hline
\textbf{5}  & 0 & 2 & 1 & 0 & 3 & 2 & 0  \\ \hline
\textbf{6}  & 0 & 0 & 4 & 0 & 0 & 4 & 0  \\ \hline
\textbf{7}  & 0 & 0 & 2 & 0 & 2 & 4 & 0  \\ \hline
\textbf{8}  & 0 & 0 & 0 & 0 & 4 & 4 & 0 \\ \hline
\textbf{9}  & 0 & 0 & 2 & 2 & -4 & 8 & 0  \\ \hline
\textbf{10}  & 0 & 0 & 0 & 2 & -2 & 8 & 0  \\ \hline
\textbf{11}  & 0 & 0 & 0 & 1 & 2 & 3 & 2  \\ \hline
\textbf{12}  & 0 & 0 & 0 & 0 & 6 & -2 & 4  \\ \hline
\textbf{13}  & 0 & -4 & 10 & -1 & 0 & -3 & 6  \\ \hline
\textbf{14}  & 0 & 0 & 0 & 2 & 0 & 2 & 4 \\ \hline
\textbf{15}  & 0 & 0 & 0 & 1 & 4 & -3 & 6  \\ \hline
\textbf{16}  & 0 & 0 & -4 & 6 & 2 & 4 & 0  \\ \hline
\textbf{17}  & 0 & 0 & -2 & 8 & -2 & -4 & 8  \\ \hline
\textbf{18}  & 0 & 0 & 0 & 2 & 2 & -4 & 8  \\ \hline
\caption{Power of $\eta$-functions being basis elements of $\S_{4}(\Gamma_{0}(64))$}
\label{convolutionSums-1_64-table}
\end{longtable}

\begin{longtable}{|c|ccccccccc|} \hline
   & \textbf{1}  &  \textbf{2}  & \textbf{3} & \textbf{4} & \textbf{6} & \textbf{9}  
   & \textbf{12} & \textbf{18} & \textbf{36}  \\ \hline
\textbf{1}  &  0 & 0 & 8 & 0 & 0 & 0 & 0 & 0 & 0 \\ \hline
\textbf{2}  &  0 & 0 & 0 & 0 & 8 & 0 & 0 & 0 & 0 \\ \hline
\textbf{3}  &  0 & 0 & 2 & 0 & 2 & 2 & 0 & 2 & 0 \\ \hline
%\textbf{4}  &  0 & 0 & 0 & 0 & 0 & 0 & 8 & 0 & 0 \\ \hline
\textbf{4}  &  0 & 0 & 3 & 0 & 1 & -1 & 0 & 5 & 0 \\ \hline
\textbf{5}  &  0 & 0 & 4 & 0 & 0 & -4 & 0 & 8 & 0 \\ \hline
\textbf{6}  &  0 & 0 & 0 & 0 & 2 & 0 & 2 & 2 & 2 \\ \hline
\textbf{7}  &  0 & 0 & 0 & 0 & 3 & 0 & -1 & 3 & 3 \\ \hline
\textbf{8}  &  0 & 0 & 0 & 0 & 3 & 0 & 1 & -1 & 5 \\ \hline
\textbf{9}  &  0 & 0 & 0 & 0 & 4 & 0 & -2 & 0 & 6 \\ \hline
\textbf{10}  &  0 & 0 & 0 & 0 & 4 & 0 & 0 & -4 & 8 \\ \hline
\textbf{11}  &  0 & 0 & 0 & 0 & 5 & 0 & -3 & -3 & 9 \\ \hline
\textbf{12}  &  -5 & 11 & 5 & -5 & -1 & -2 & 0 & 0 & 5 \\ \hline
\caption{Power of $\eta$-functions being basis elements of $\S_{4}(\Gamma_{0}(36))$}
\label{convolutionSums-1_36-table}
\end{longtable}

%\section*{Appendix}  

\end{document}